\algnewcommand\algorithmicassumptions{\sc{Assumptions:}}
\algnewcommand\Assumptions{\item[\algorithmicassumptions]}
\algnewcommand{\algorithmicand}{\textbf{and}}
\algnewcommand{\algorithmicor}{\textbf{or}}
\algnewcommand{\FOR}{\algorithmicfor}
\algnewcommand{\OR}{\algorithmicor}
\algnewcommand{\AND}{\algorithmicand}
\algnewcommand{\IF}{\algorithmicif}
\algnewcommand{\THEN}{\algorithmicthen}
\algnewcommand{\ELSE}{\algorithmicelse}
\algnewcommand{\Fail}{\textsc{Fail}}
\algnewcommand{\Cert}{\textsc{Cert}}
\algnewcommand{\NoCert}{\textsc{NoCert}}
\algnewcommand{\Flag}{\textsc{Flag}}
\algnewcommand{\CommentLine}[1]{\(\triangleright\) \emph{\small #1}}
\algnewcommand{\InlineFor}[2]{\algorithmicfor\ #1\ \algorithmicdo\ #2} % single line for
\algnewcommand{\InlineIf}[2]{% single line if-then
  \algorithmicif\ #1\ \algorithmicthen\ #2}
\algnewcommand{\InlineIfElse}[3]{% single line if-then-else
  \algorithmicif\ #1\ \algorithmicthen\ #2\ \algorithmicelse\ #3}
\let\original@algocf@latexcaption\algocf@latexcaption
\long\def\algocf@latexcaption#1[#2]{%
  \@ifundefined{NR@gettitle}{%
    \def\@currentlabelname{#2}%
  }{%
    \NR@gettitle{#2}%
  }%
  \original@algocf@latexcaption{#1}[{#2}]%
}
\newcounter{algorithmicH}% New algorithmic-like hyperref counter
\let\oldalgorithmic\algorithmic
\renewcommand{\algorithmic}{%
  \stepcounter{algorithmicH}% Step counter
  \oldalgorithmic}% Do what was always done with algorithmic environment
\renewcommand{\theHALG@line}{ALG@line.\thealgorithmicH.\arabic{ALG@line}}
\algrenewcommand\Call[2]{\nameref{#1}\ifthenelse{\equal{#2}{}}{}{\ensuremath{(#2)}}}%
\newcommand{\algoName}[1]{Algorithm \nameref{#1}}
\newcommand{\algoCaptionLabel}[2]{%
     \caption[\textproc{#1}]{\textproc{#1}\ifthenelse{\equal{#2}{}}{}{$(#2)$}}%
     \label{algo:#1}%
     }%
\def\A{\mathbb{A}}
\def\B{\mathbb{B}}
\def\K{\mathbb{K}}
\def\k{\mathbbmss{k}}
\def\L{\mathbb{L}}
\def\Q{\mathbb{Q}}
\def\Z{\mathbb{Z}}
\def\N{\mathbb{N}}
\def\coeff{\textsc{Coefficient}}
\def\pcoeff{\textsc{PolynomialCoefficient}}
\def\bdiv{\mathrm{~div~}}
\def\brem{\mathrm{~rem~}}
\def\softO{O\tilde{~}}
\def\mm{\mathfrak{m}}
\def\iniT{\mathsf{T}}
\def\iniS{\mathsf{S}}
\def\iniU{\mathsf{U}}
\def\iniR{\mathsf{R}}
\declaretheorem[style=plain,parent=section]{definition}
\declaretheorem[sibling=definition]{theorem}
\declaretheorem[sibling=definition]{corollary}
\declaretheorem[sibling=definition]{proposition}
\declaretheorem[sibling=definition]{lemma}
\declaretheorem[sibling=definition]{remark}
\declaretheorem[sibling=definition]{example}
\date{}
\begin{document}

\title{Newton iteration for lexicographic Gr\"obner bases in two variables}
\maketitle
\begin{center}
\'Eric Schost and Catherine St-Pierre
\end{center}

{School of Computer science, University of Waterloo}, 
{Waterloo}, {Ontario}, {Canada}

\begin{abstract}
  We present an $\mm$-adic Newton iteration with quadratic convergence
  for lexicographic Gr\"obner basis of zero dimensional ideals in two
  variables. We rely on a structural result about the syzygies in such
  a basis due to Conca and Valla, that allowed them to explicitly
  describe these Gr\"obner bases by affine parameters; our Newton iteration
  works directly with these parameters.
\end{abstract}

\textbf{keyword}
\textit{Primary components, $\mm$-adic algorithm, Gr\"obner bases}

\section{Introduction}

Solving bivariate polynomial equations plays an important role in
algorithms for computational topology or computer graphics. As a
result, there exists a large body of work dedicated to this question,
using symbolic, numeric or mixed symbolic-numeric
techniques~\cite{GoKa96,EmTs05,DiEmTs09,AlMoWi10,Rouillier10,BeEmSa11,EmSa12,BoLaPoRo13,LMS13,BoLaMoPoRo14,KoSa14,MeSc16,KoSa15,BoLaMoPoRoSa16,DiDiRoRoSa18,Dahan22}.

In many instances, these algorithms find a set-theoretic
description of the solutions of a given system $f_1,\dots,f_t$ in
$\K[x,y]$ (here, $\K$ is a field). This can notably be done
through the {\em shape lemma}: in generic coordinates, the output is a
pair of polynomials $u,v$ in $\K[x]$, with $u$ squarefree, such that
$V(\langle f_1,\dots,f_t\rangle)$ is described by $u(x)=0$ and $y = v(x)/u'(x)$ (this
rational form for $y$ allows for a sharp control of the bit-size of
$v$, if $\K=\Q$). One could slightly enrich this set-theoretic
description by lifting the requirement that $u$ be squarefree, and
instead assign to a root $\xi$ of $u$, corresponding to a point
$(\xi,\nu)$, the multiplicity of $J=\langle f_1,\dots,f_t \rangle$ at
$(\xi,\nu)$ (adapting the definition of $v$ accordingly). This is
what is done in Rouillier's Rational Univariate
Parametrization~\cite{Rouillier99}, but this still only gives partial
information: for instance it is not sufficient to detect local isomorphisms.

In order to describe the solutions of $J$, but also the local
structure of $J$ at these zeros, it is natural to turn to Gr\"obner
bases. This is what we do in this paper, our focus being an $\mm$-adic
approximation procedure.

\paragraph{Our problem and our main result}
Let us assume that our base field $\K$ is the field of fractions of a
domain $\A$, and take $f_1,\dots,f_t$ in $\A[x,y]$.

Consider further the ideal $J=\langle f_1,\dots, f_t \rangle$ in
$\K[x,y]$. We are interested in finding a Gr\"obner basis of $J$
itself, or possibly of some specific primary components of it. We will
thus let $I$ be an ideal in $\K[x,y]$, which we assume to be the
intersection of some of the zero-dimensional primary components of
$J$: typical cases of interest are $I=J$, if it has dimension zero, or
$I$ being the $\langle x,y \rangle$-primary component of $J$, if the
origin is isolated in~$V(J)$.

We let $\mathcal G=(g_0,\dots,g_s)$ be the minimal, reduced Gr\"obner basis
of $I$ for the lexicographic order induced by $y \succ x$; this is
the object we are interested in. 
\begin{example}\label{ex:runningex} %Our running example 
Let
$\A=\Z$, and thus $\K=\Q$, $t=2$ and input polynomials
\begin{align*}
f_1 &= -12x y^5 - 20x^2 y^4 - 14y^4 - 7x^3 y^3 - 3x^2 y^2 + 13x^3 y - 17xy + 34x^2\\[1mm]
f_2 &= -x^2 y^4 - 19x^3 y^3 + 18x y^3 + 22x^3 y^2 + 2x^2  y^2- 10x^2y.
\end{align*}
We let $I$ be the $\langle x,y\rangle$-primary component of $\langle
f_1,f_2\rangle$; its Gr\"obner basis $\mathcal G$ is
\begin{equation}\label{eq:prim}
\left |
\begin{array}{l}
  y^4 + \frac{17}{14}xy - \frac{17}{7}x^2, \\[1mm]
   x y^3 - \frac{10}{9}x^3,\\[1mm]
  x^2 y - 2x^3,\\[1mm]
  x^4.
\end{array}
\right .
\end{equation}
\end{example}

Let now $\mm$ be a maximal ideal in $\A$, with residual field
$\k=\A/\mm$. Starting from the reduction of $\mathcal G$ modulo $\mm$
(assuming it is well-defined), the goal of this paper is to show how
to recover $\mathcal G$ modulo powers of~$\mm$. The case $\A=\Z$ seen above
is the fundamental kind of example; another important situation is the
``parametric'' case, with $\A=\k[t_1,\dots,t_m]$ and $\mm$ a
maximal ideal of the form $\langle
t_1-\tau_1,\dots,t_m-\tau_m\rangle$.

Let $\A_\mm$ ($\A_\mm \subseteq \K$) be the localization of $\A$ at $\mm$.  For $K \ge 0$, there
exists a well defined reduction operator $\A_\mm \to \A/\mm^K$, which
we write $c \mapsto c \brem \mm^K$; we extend it coefficient-wise to a
reduction mapping $\A_\mm[x,y] \to \A/\mm^K[x,y]$, and further to
vectors of polynomials.

\begin{definition}\label{def:good}
  We say that $\mm$ is {\bf good} with respect to $f_1,\dots, f_t$ and $\mathcal G$ if the following holds:
  \begin{itemize}
  \item all elements in $\mathcal G$ are in $\A_\mm[x,y]$,
  \item the ideal generated by $\mathcal G \brem \mm$ in $\k[x,y]$ is the
    intersection of some of the primary components of the ideal $\langle f_1
    \brem \mm,\dots,f_t \brem \mm\rangle$.
  \end{itemize}
\end{definition}
In particular, if $\mm$ is good, we will write $\mathcal G_\mm$ for the
reduction $\mathcal G \brem \mm$. These are polynomials in $\k[x,y]$, and
they still form a minimal, reduced Gr\"obner basis for the
lexicographic order $y \succ x$. 
\begin{example}
In \autoref{ex:runningex},	
$\mm = \langle 11
\rangle$ is good with respect to $f_1,\dots,f_t$ and $\mathcal G_\mm$ is
\begin{align*}
\left |
\begin{array}{l}
  y^4 + 2 xy + 7 x^2, \\[1mm]
   x y^3 +5 x^3,\\[1mm]
  x^2 y + 9x^3,\\[1mm]
  x^4.
\end{array}
\right .
\end{align*}
\end{example}

If $\A=\Z$, there are finitely many primes $p$ for which this is not
the case. In the case $\A=\k[t_1,\dots,t_m]$, all maximal ideals of
the form $\langle t_1-\tau_1,\dots,t_m-\tau_m\rangle$ are good, except
for those $(\tau_1,\dots,\tau_m)$ lying on a certain hypersurface in $\k^m$
(a quantitative analysis of the number of bad maximal ideals will be
the subject of future work).

Our main result is an efficient lifting procedure based on Newton
iteration to compute $\mathcal G \brem \mm^K$, given $f_1,\dots,f_t$, $\mathcal G_\mm$ and $K$. Lifting methods are widely used in computer algebra,
for instance to solve linear systems or compute polynomial GCDs, and
serve two purposes. First, while the arithmetic cost of solving the
problem at hand (here, computing the Gr\"obner basis of $I$) may be
high, our result will show that lifting an approximate solution modulo
powers of $\mm$ is a relatively simple problem. Second, these
techniques are usually used in cases where elements in $\A$, and $\K$,
have a natural notion of ``size'' (such as the height when $\A=\Z$, or
degree when $\A=\k[t_1,\dots,t_m]$). Then, direct computations in $\K$
often induce a significant ``intermediate expression swell'', where
polynomials computed throughout the algorithm may have larger
coefficients than the final output; $\mm$-adic approximation schemes
avoid this issue.

Our algorithm features the quadratic convergence typical of Newton
iteration, in the sense that it computes $\mathcal G \brem \mm^2,\mathcal G
\brem \mm^4,\dots$; hence, without loss of generality, we assume that
$K=2^\kappa$ is a power of two. The cost of the algorithm is expressed
in terms of two kinds of quantities:
\begin{itemize}
\item number of operations in the rings $\A/\mm^{2^i}$ (for which we
  discuss our computational model in more detail at the end of the
  introduction)
\item the cost of reducing the coefficients of the polynomials $f_i$
  modulo $\mm^{2^i}$: we will assume that for $i \ge 0$, each such
  coefficient can be reduced modulo $\mm^{2^i}$ in time $T_{2^i}$ (for
  $\A=\Z$, this time would depend on the bit-size of these
  coefficients; over $\A=\k[t_1,\dots,t_m]$, it would depend on their
  degree, and the number $m$ of parameters).
\end{itemize}
Throughout, the $\softO$ notation indicates that we omit
polylogarithmic factors, and $\omega$ is a feasible exponent for
linear algebra. 

%% In order to measure its runtime, let $\bm E$ denote the initial terms
%% of $\mathcal G$ (and thus of $\mathcal g_\mm$), listed in decreasing order. We
%% denote them by
%% $$\bm E = (y^{n_0}, x^{m_1} y^{n_1}, \dots, x^{m_{s-1}}
%% y^{n_{s-1}},x^{m_s})$$ with the $n_i$'s decreasing and $m_i$'s
%% increasing. Finally, we let $\delta$ be the {\em degree} of $\mathcal G$,
%% that is, the dimension of $\K[x,y]/\langle \mathcal G\rangle$.

\begin{theorem}\label{prop:quad}
  Let $f_1,\dots,f_t$ be of degree at most $d$ in $\A[x,y]$, with $\A$
  a domain, that generate an ideal $J$ in $\K[x,y]$, with $\K$ the
  fraction field of $\A$.  Let $I$ be the intersection of some of the
  zero-dimensional primary components of $J$, with minimal, reduced
  Gr\"obner basis $\mathcal G$, for the lexicographic order induced by $y
  \succ x$.
  
  Let further $\bm E= (y^{n_0}, x^{m_1} y^{n_1}, \dots, x^{m_{s-1}}
  y^{n_{s-1}} , x^{m_s})$ be the initial terms of $\mathcal G$, and
  let $\delta = \dim_\K \K[x,y]/I$.

  Let $\mm \subseteq \A$ be a good maximal ideal for $\mathcal G$. For $K$ of
  the form $K=2^k$, given $\mathcal G \brem \mm$, one can find $\mathcal G
  \brem \mm^K$ with the following cost:
  \begin{itemize}
  \item $\softO (s^2 n_0 m_s + t\delta(d^2 + d m_s +
    s\delta + \delta^{\omega-1}))$ operations in $\A/\mm^{2^i}$, for
    $i=1,\dots,k$;
  \item $t d^2 T_{2^i}$ steps for coefficient reduction, for $i=1,\dots,k$.
  \end{itemize}
\end{theorem}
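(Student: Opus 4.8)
The plan is to reduce the computation of $\mathcal G \brem \mm^K$ to the resolution of a (structured) system of polynomial equations in the Conca–Valla affine parameters, and then to run an $\mm$-adic Newton iteration on that system. First I would invoke the Conca–Valla parametrization: since $\mathcal G$ has fixed initial terms $\bm E$, it is determined by a finite tuple of coefficients in $\K$ (the ``affine parameters''), subject to the constraint that the candidate generators with those parameters actually form a Gr\"obner basis with the prescribed leading terms. The key structural input is that the syzygies among the $g_i$ are generated by $s$ explicit ``staircase'' syzygies, so that the Buchberger $S$-polynomial conditions reduce to finitely many polynomial identities in the parameters; moreover — and this is the crucial point for a Newton iteration — at a good $\mm$ the Jacobian of this system is invertible modulo $\mm$, because $\mathcal G_\mm$ is itself the reduced Gr\"obner basis of an ideal with the same staircase, i.e.\ the system has the correct number of equations and a simple root.

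Second I would set up the iteration proper. Writing $\Phi$ for the map sending a parameter tuple to the normal forms of the $f_j$ modulo the candidate ideal (these must vanish, since $I \supseteq$ the relevant primary components of $J$), the equations are $\Phi(\text{params}) = 0$; given a solution $P_i$ modulo $\mm^{2^i}$ with invertible Jacobian $D\Phi(P_i)$, the standard Newton step $P_{i+1} = P_i - D\Phi(P_i)^{-1}\Phi(P_i) \bmod \mm^{2^{i+1}}$ doubles the precision, and correctness follows from the usual $\mm$-adic Newton lemma once I check that $D\Phi$ stays invertible (its determinant is a unit modulo $\mm$, hence modulo every $\mm^{2^i}$). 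The base case $i=0$ is exactly the given datum $\mathcal G_\mm$, and uniqueness of the reduced Gr\"obner basis with initial terms $\bm E$ guarantees the lifted solution is the reduction of the true $\mathcal G$.

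Third comes the cost accounting, which is where most of the real work lies. At step $i$ one must: (a) reduce the coefficients of $f_1,\dots,f_t$ modulo $\mm^{2^i}$, giving the $t d^2 T_{2^i}$ term; (b) evaluate $\Phi$, i.e.\ compute the normal forms of the $t$ polynomials $f_j$ (each with $O(d^2)$ terms) modulo the current candidate ideal — reduction against the staircase of size $\delta$ costs $O(d^2 + d m_s)$ per monomial-reduction sweep and $O(s\delta)$ for the tail, so $O(t\delta(d^2 + dm_s + s\delta))$ overall; (c) assemble the Jacobian and solve the linear system, costing $O(\delta^\omega)$, i.e.\ $O(t\delta \cdot \delta^{\omega-1})$ after absorbing into the same bound; and (d) the bookkeeping of recombining parameters into the $g_i$ and handling the $S$-polynomial syzygies, which accounts for the $s^2 n_0 m_s$ term (the staircase syzygies live in degree bounded by $n_0$ in $y$ and $m_s$ in $x$, and there are $O(s^2)$ of them). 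Summing over $i = 1,\dots,k$ and using $\softO$ to absorb the geometric-series and polylog overhead yields the claimed bounds.

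The main obstacle, and the part that needs genuine care rather than routine estimation, is establishing that the Conca–Valla system has an \emph{invertible Jacobian} at the good point — equivalently, that the defining equations for ``Gr\"obner basis with staircase $\bm E$ containing the given primary components'' cut out the point $\mathcal G$ transversally. This requires combining the syzygy description with a dimension count (number of affine parameters versus number of independent normal-form equations), and carefully using the hypothesis that $\mm$ is good so that reduction modulo $\mm$ commutes with all the relevant normal-form computations. Once that is in place, the rest is the standard Newton machinery plus careful but unsurprising complexity bookkeeping.
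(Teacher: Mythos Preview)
Your overall architecture matches the paper's: parametrize $\mathcal G$ by Conca--Valla coordinates, write down the system of equations given by the vanishing of the normal forms of $f_1,\dots,f_t$ modulo a generic member of the Gr\"obner cell, and run $\mm$-adic Newton iteration on those equations. That is exactly what the paper does.

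However, your treatment of the point you yourself flag as the main obstacle --- full rank of the Jacobian --- has a genuine gap. You argue it follows from ``the system has the correct number of equations and a simple root,'' but the system has $t\delta$ equations in $N=\delta+m_s$ unknowns and is typically overdetermined, so no dimension count can work; and ``simple root'' is precisely what must be proved. Your suggested route (``combining the syzygy description with a dimension count'') is not what actually establishes this. The paper's argument (\cref{prop:mult1}) instead introduces an \emph{auxiliary} ideal $\mathcal I$ generated by the normal forms of the $g_i$ themselves (not the $f_i$), shows $\mathcal I=\langle \Lambda_k-\lambda_k\rangle$ using the explicit inverse of the Conca--Valla map, and then uses the hypothesis that $I$ is a union of primary components of $J$ in the concrete form $J=II'$ with $I+I'=\langle 1\rangle$: writing $K+K'=1$ with $K\in I$, $K'\in I'$, one has $g_iK'\in J$, so $\mathcal N_j(g_iK')$ lies in the ideal $\mathcal J$ generated by the actual equations, while $\mathcal N_j(g_iK)\in\mathcal I^2$ contributes nothing to the Jacobian. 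This reduces the rank question for $\mathcal J$ to that for $\mathcal I$, which is immediate. Without this primary-decomposition trick your sketch does not close.

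Two smaller points. First, you conflate two different candidate systems in your first paragraph (Buchberger $S$-polynomial conditions versus normal forms of the $f_i$); only the latter is used. Second, your attribution of the $s^2 n_0 m_s$ term to ``$O(s^2)$ $S$-polynomial syzygies'' is off: in the paper this term comes from the cost of the conversions between Gr\"obner parameters and the reduced basis (\cref{prop:redfromP} and \cref{prop:overA}). You also do not say how the Jacobian $D\Phi$ is computed within the stated budget; the paper's device is to run the normal-form computation once over the first-order jet ring $\A_{2\kappa}[\Lambda_1,\dots,\Lambda_N]/\langle\Lambda_1,\dots,\Lambda_N\rangle^2$, which yields value and Jacobian simultaneously at the cost of an extra factor $O(N)=O(\delta)$ per base-ring operation --- this is where the factor $\delta$ multiplying $(d^2+dm_s+s\delta)$ comes from.
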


\begin{remark}
  When $I$ is the $\langle x,y\rangle$-primary component of $J$, runtimes
  can be sharpened, giving
  \begin{itemize}
  \item $\softO (s^2 n_0 m_s + t\delta^2(m_s + \delta^{\omega-2}))$
    operations in $\A/\mm^{2^i}$, for $i=1,\dots,k$;
  \item $t \delta m_s T_{2^i}$ steps for coefficient reduction, for
    $i=1,\dots,k$.
  \end{itemize}
  Since $m_s \le \delta$, these are in particular $\softO (s^2 n_0 m_s
  + t\delta^3) \subset \softO((s+t)\delta^3)$, resp.\ $t \delta^2
  T_{2^i}$. For the latter, we also have the bound $t d^2 T_{2^i}$
  stated in the theorem, but here we prefer to express the cost in
  terms of the multiplicity $\delta$ only.
\end{remark}
This paper focuses on those cases where the ideal $I$ is not radical
(that is, where some points $p\in V(I)$ are singular), with the intent
of computing the local structure at such points.
If the sole interest is to find $V(I)$, then our approach is
unnecessarily complex: the algorithms in~\cite{LMS13,MeSc16} use
Newton iteration to compute a set-theoretic description of the
solutions in an efficient manner.

\begin{example}\label{ex:generic}
  An extreme case has $t=2$ and $f_1,f_2$ ``generic'' in the sense
  that they define a radical ideal in $\K[x,y]$ with $d^2$ solutions
  in general position. In this case, if we take $I=J$, we have $s=1$,
  $m_s=\delta=d^2$ and $n_0=1$.  Then, the complexity in the first
  item of the theorem becomes $\softO (d^5)$ operations modulo each
  $\mm^{2^i}$. This is to be compared with the sub-cubic cost
  $\softO(d^{(\omega+3)/2})$ reported in~\cite{LMS13} for a similar
  task.
\end{example}

Clearly, for these generic situations, our algorithm does not compare
favourably with the state of the art.  For the situation in
\autoref{ex:generic}, some techniques from~\cite{LMS13} could be put
to use in our situation as well, but they would at best give a runtime
of $\softO(d^{2+(\omega+3)/2})$ operations in $\A/\mm^{2^i}$, still
leaving a quadratic overhead. This is due to the different ways these
papers apply Newton iteration: in our case, we linearize the problem
in dimension $d^2$ (or, in, general, $\delta$), and thus work with
matrices of such size, whereas~\cite{LMS13} work with matrices of
size $2$ (albeit with polynomial entries).

%% However, the genericity of the ideal in \autoref{ex:generic} ensures
%% that the points in $V(I)$ are all smooth, in which case $V(I)$ is
%% sufficient to describe $\mathcal G$. But the algorithms
%% from~\cite{LMS13,MeSc16} recover general Gr\"obner bases that could
%% not be expressed by the Shape lemma.

The results of \autoref{prop:quad} are of interest in the presence of
intersection with multiplicities, where approaches such
as~\cite{LMS13} do not apply. The algorithm in~\cite{MeSc16} does not
solve our problem in such cases, as it does not compute a Gr\"obner
basis of $I$, but of its radical.

Remark that to derive a complete algorithm from our result, further
ingredients are needed: quantitative bounds on the number of bad
ideals $\mm$ (if $\A=\Z$ or $\A=\k[t_1,\dots,t_m]$, for instance), a
cost analysis for computing the starting point $\mathcal G_\mm$ and bounds
on a sufficient precision $K$ that will allow us to recover $\mathcal G$
from its approximation $\mathcal G \brem \mm^K$.  In order to avoid this
paper growing to an excessive length, we will address these questions
in a separate manuscript.\\

%% \iffalse Hereinafter, we consider the following example.

%% \begin{example}
%% Let
%% $\A=\Z$, and thus $\K=\Q$, $t=2$ and input polynomials
%% \begin{align*}
%% f_1 &= -12x y^5 - 20x^2 y^4 - 14y^4 - 7x^3 y^3 - 3x^2 y^2 + 13x^3 y - 17xy + 34x^2\\[1mm]
%% f_2 &= -x^2 y^4 - 19x^3 y^3 + 18x y^3 + 22x^3 y^2 + 2x^2  y^2- 10x^2y.
%% \end{align*}
%% We let $I$ be the $\langle x,y\rangle$-primary component of $\langle
%% f_1,f_2\rangle$; its Gr\"obner basis $\mathcal G$ is
%% \[
%% \left |
%% \begin{array}{l}
%%   y^4 + \frac{17}{14}xy - \frac{17}{7}x^2, \\[1mm]
%%    x y^3 - \frac{10}{9}x^3,\\[1mm]
%%   x^2 y - 2x^3,\\[1mm]
%%   x^4.
%% \end{array}
%% \right .
%% \]

%% \noindent Here, $\mm = \langle 11
%% \rangle$ is good with respect to $\mathcal G$ and the $f_i$'s, and $\mathcal G_\mm$ is
%% \begin{align*}
%% \left |
%% \begin{array}{l}
%%   y^4 + 2 xy + 7 x^2, \\[1mm]
%%    x y^3 +5 x^3,\\[1mm]
%%   x^2 y + 9x^3,\\[1mm]
%%   x^4.
%% \end{array}
%% \right .
%% \end{align*}

%% \end{example}\fi

We now review previous work on bivariate systems and Newton iteration
for Gr\"obner bases. As we will see, there is a marked difference
between Newton iteration algorithms for ``simple'' solutions (where
the Jacobian of the input equations has full rank) in generic position
and those that can handle arbitrary situations. 

\paragraph{Newton iteration for non-degenerate solutions}
Following an early discussion in~\cite{Ebert83}, $p$-adic techniques
for Gr\"obner bases were introduced by Trinks in the
1980's~\cite{Trinks84}. That article focuses on zero-dimen\-sio\-nal
{\em radical} ideals with generators in $\Z[x_1,\dots,x_n]$, in shape
lemma position, that is, with a Gr\"obner basis of the form
$x_1-G_1(x_n),\dots,x_{n-1}-G_{n-1}(x_n),G_n(x_n)$, for the
lexicographic order $x_1 \succ \cdots \succ x_n$. Under this
assumption, given a ``lucky'' prime $p$, one can apply a symbolic form
of Newton iteration to lift $(G_1,\dots,G_n) \brem p$ to
$(G_1,\dots,G_n) \brem p^K$, for an arbitrary $K \ge 0$. Similar
techniques were used in the geometric resolution
algorithm~\cite{GiHeMoPa95,GiHeMoMoPa98,GiHaHeMoMoPa97,GiLeSa01}; the
scope of this symbolic form of Newton iteration was then extended
in~\cite{Schost03} to {\em triangular sets}, which are here understood
as those particular lexicographic Gr\"obner bases $(G_1,\dots,G_n)$
with respective initial terms of the form $x_1^{e_1},\dots,x_n^{e_n}$,
for some positive integers $e_1,\dots,e_n$. In~\cite{LMS13}, these
techniques were studied in detail for the case $n=2$ that concerns us
in this paper, with a focus on the complexity of the lifting process.

Computationally, these algorithms are rather straightforward: they
mainly perform matrix multiplications in size $n$ with entries that are
polynomials with coefficients in $\Z/p^K\Z$ (or more generally
$\A/\mm^K$). These methods also share their numerical counterpart's
quadratic convergence (in one iteration, the precision doubles, from
$p^K$ to $p^{2K}$), but none of them can directly handle solutions
with multiplicities.

\paragraph{Lifting algorithms for general inputs}
\cite{Winkler88} introduced an algorithm that handles arbitrary
inputs: given a Gr\"obner basis $\mathcal G$ for $f_1,\dots,f_t$
reduced modulo a ``lucky'' prime $p$, it recovers the Gr\"obner basis
of the same system modulo $p^K$, for any $K \ge 0$. No assumption is
made on the dimension of $V(\langle f_1,\dots,f_t\rangle)$ or the rank
of the Jacobian matrix of the equations. The computations are more
complex as the ones above, as they involve lifting not only the
Gr\"obner basis $\mathcal G$ itself, but also all quotients in the
division of $f_1,\dots,f_t$, and of the $S$-polynomials of $\mathcal
G$, by $\mathcal G$.

In follow-up work, \cite{Pauer92} discussed the choice of lucky
primes; for homogeneous inputs, or graded orderings,~\cite{Arnold03}
gave an efficient criterion to stop lifting and simplified the lifting
algorithm itself, using ideas of Pauer's (the $S$-polynomials are not
needed anymore).

To our knowledge, the algorithms mentioned here only perform {\em
  linear} lifting, going from an approximation modulo $p^K$ to
precision $p^{K+1}$; whether quadratic convergence is possible is
unclear to us. No cost analysis was made.

\paragraph{Deflation}
Ojika, Watanabe and Mitsui introduced the idea of deflation in a
numerical context~\cite{OjWaMi83}, to restore Newton iteration's
quadratic convergence even for multiple roots. The core idea is to
replace the system we are given by another set of equations, having
multiplicity one at the root we are interested in, possibly
introducing new variables. There are now many references discussing
this approach, see for
instance~\cite{Yamamoto84,Lecerf02,LeVeZh06,LeVeZh08,PoSz09,DaLiZe11,MaMo11,WuZh12}.

We are in particular going to use an idea from~\cite{HaMoSz17}. In
that reference, Hauenstein, Mourrain and Szanto designed a deflation
operator for an $n$-variate system $f_1,\dots,f_t$, that converges
quadratically to an augmented root $(\xi,\nu)$, where $\nu$ is a
vector that specifies the local structure at a point $\xi \in
V(\langle f_1,\dots,f_t\rangle)$, through the coefficients of multiplication matrices
in the local algebra at $\xi$. If $\xi$ is known, this gives in
particular an operator with quadratic convergence to compute the
structure constants. 

\paragraph{Our contribution} The lifting algorithm we propose is simpler
than in~\cite{Winkler88,Arnold03} (we do not need to consider the
polynomial quotients in the division of $f_1,\dots,f_t$ by $\mathcal
G$), but so far specific to lexicographic orders in two variables.

The first step is to identify a family of free parameters that
describe Gr\"obner bases with given initial terms (these Gr\"obner
bases form a {\em Gr\"obner cell}). The coefficients that appear in
the Gr\"obner basis do not form such a family, as there are nontrivial
relations between them. However, for lexicographic orders in two
variables, Conca and Valla explicitly constructed a one-to-one
parametrization of a given Gr\"obner cell by an affine
space~\cite{CoVa08}, from a description of canonical generators of
the syzygy module. Our Newton iteration computes the parameters
corresponding to $\mathcal G_\mm$ and lifts them modulo $\mm^K$.

This is done by adapting the approach of~\cite{HaMoSz17}: the
coefficients of the normal forms of $f_1,\dots,f_t$ modulo the unknown
Gr\"obner basis $\mathcal G$ are polynomials in the parameters of the
Gr\"obner cell; we prove that they admit as a (not necessarily unique)
solution the parameters corresponding to $\mathcal G$, and that their
Jacobian matrix has full rank at this solution. We can then apply
Newton iteration to these polynomials, using $\mathcal G_\mm$ to give us
their solution modulo $\mm$ as a starting point.

Computationally, the core operation is simply reduction modulo a
lexicographic Gr\"obner basis. While we have algorithms with
quasi-linear cost for reduction modulo a single polynomial, or modulo
two polynomials with respective initial terms $y^n$ and $x^m$, we are
not aware of specific results for arbitrary lexicographic
bases. Another contribution of this paper is a reduction algorithm,
where we use techniques developed by van der Hoeven and
Larrieu~\cite{HoLa18} for certain weighted orderings, adapted
to our purposes.

\paragraph{Leitfaden}
In \cref{sec:def}, we discuss {\em initial segments} in $\N^2$;
they allow us to describe polynomials reduced modulo a Gr\"obner
basis. We give in particular an algorithm for multiplying two such
polynomials.

In \cref{sec:puncGB}, we review known results on the structure of
bivariate lexicographic Gr\"obner bases: Lazard's
theorem~\cite{Lazard85}, and Conca and Valla's description of
Gr\"obner cells; \cref{sec:reduction} then presents our algorithm
for reduction modulo a lexicographic Gr\"obner basis.

In \cref{sec:conversion} and \cref{sec:inverse conversion}, we
give algorithms to compute the Gr\"obner basis corresponding to
a set of parameters in the Gr\"obner cell, and conversely.  Finally, we
describe Newton iteration for  the Gr\"obner cell parameters 
in \cref{sec:Newton},  proving \cref{prop:quad}.

\paragraph{Computational model}
In the whole paper, the costs of algorithms are measured using 
numbers of operations in the base ring or base field.

We will first and foremost count {\em $\Z$-algebra operations}. For an
algorithm with inputs and outputs in a (unital) ring $\A$, these are
additions and multiplications involving the inputs, previously
computed quantities, and constants taken from the image of the
canonical mapping $\Z\to \A$ (e.g., integers if $\A$ has
characteristic zero); they will be simply be called ``$(+,\times)$
operations''. If an algorithm performs only this kind of operations,
its outputs are in the subring of $\A$ generated by its inputs.

Important examples are addition, multiplication and Euclidean
division (by a monic divisor) in $\A[x]$; they can all be done using a
softly linear number of $(+,\times)$ operations in $\A$, over any base
ring $\A$. For background, see Chapters~8 and~9 in~\cite{GaGe13}.

Other operations we will occasionally use are invertibility tests and
inversions (to solve linear systems). Finally, if $\mm$ is an ideal in
a ring $\A$, given $a$ in $\A/\mm$, we assume that we can find $A$ in
$\A$ with $A \brem \mm =a$ using one operation in $\A$.

\paragraph{Notation} The following notation is used throughout
the paper. In the following items, $\A$ is an arbitrary ring.
\begin{itemize}
\item For $d \ge 1$, We let $\A[x]_{< d}$ be the free $\A$-module
  of all polynomials in $\A[x]$ of degree less than $d$.
\item For $f,g$ in $\A[x]$, with $f$ monic, we define $f \brem g$ and
  $f \bdiv g$ as respectively the remainder and quotient in the
  Euclidean division of $f$ by $g$.
\item For $f$ in $\A[x,y]$, $\deg(f,x)$ and $\deg(f,y)$ respectively
  denote its partial degrees with respect to $x$ and $y$.
\item For $f$ in $\A[x,y]$ and $i \ge 0$, the {\em polynomial
  coefficient} of $y^i$ in $f$ will refer to the coefficient $f_i$ in
  the expression $f = \sum_{i=0}^d f_i y^i$, with $f_0,\dots,f_d$ in
  $\A[x]$. In the pseudo-code, we write $\pcoeff(f,y^i) \in \A[x]$ for
  this polynomial coefficient.
\item If $f \in \A[x,y]$ has degree $d$ in $y$, we say that $f$ is
  {\em monic in $y$} if the polynomial coefficient of $y^d$ is $1$
  (this definition and the previous one carry over to coefficients
  with respect to $x$ instead, but we will not need this).
%% \item Suppose that $\A[x,y]$ is endowed with a monomial order. For a
%%   set of polynomials ${\mathcal G}=(g_0,\dots,g_s)$ in $\A[x,y]^s$, we say
%%   that ${\mathcal G}$ is {\em reduced} if all $g_i$'s have initial
%%   coefficient $1$, and for $i \ne j$, the initial term of $g_i$
%%   divides no non-zero monomial in $g_j$.
\item If $\sf T$ is a subset of $\N^2$, we write
  $\A[x,y]_{\sf T}$ for the $\A$-module of polynomials {\em
  supported on $\sf T$}, that is, all polynomials of the form $\sum_{(u,v)
  \in \sf T} a_{u,v} x^u y^v$, with only finitely many non-zero
  coefficients $a_{u,v}$. 
\end{itemize}

We will not need to define Gr\"obner bases over rings. In particular,
for reduction of bivariate polynomials, we only work over fields: if
${\mathcal G}$ is a Gr\"obner basis in $\K[x,y]$, where $\K$ is a field and
$\K[x,y]$ is endowed with a monomial order, $f \brem {\mathcal G}$ denotes
the remainder of $f$ through reduction by ${\mathcal G}$.

%%%%%%%%%%%%%%%%%%%%%%%%%%%%%%%%%%%%%%%%%%%%%%%%%%%%%%%%%%%%
%%%%%%%%%%%%%%%%%%%%%%%%%%%%%%%%%%%%%%%%%%%%%%%%%%%%%%%%%%%%
%%%%%%%%%%%%%%%%%%%%%%%%%%%%%%%%%%%%%%%%%%%%%%%%%%%%%%%%%%%%

\section{Initial segments in $\N^2$}\label{sec:def}

In this section, we first introduce terminology and basic
constructions regarding subsets of $\N^2$ called initial segments. In
the second part, we give algorithms to multiply polynomials supported
on such initial segments.

%%%%%%%%%%%%%%%%%%%%%%%%%%%%%%%%%%%%%%%%%%%%%%%%%%%%%%%%%%%%

\subsection{Basic definitions} \label{ssec:init}

\paragraph{Initial segments}
We say that a set $\iniT \subset\N^2$ is an {\em initial segment} if
for all $(m,n)$ in $\iniT$, any pair $(m',n')$ with $m'\le m$ and $n'
\le n$ is also in $\iniT$.

Suppose that $\iniT$ is an initial segment in $\N^2$, let $\K$ be a
field and $x,y$ be variables over $\K$.  The elements in $\K[x,y]$
supported on $\N^2-{\sf T}$ form a monomial ideal $I \subset
\K[x,y]$. Conversely, any initial segment $\iniT$ in $\N^2$ can be
obtained in this manner from a monomial ideal $I$, as the set of
exponents of monomials not in $I$.  If $\sf T$ is finite, we write the
minimal monomial generators of $I$ as
$$\bm E = (y^{n_0}, x^{m_1} y^{n_1}, \dots, x^{m_{s-1}}
y^{n_{s-1}},x^{m_s})$$ with the $m_i$'s increasing and the $n_i$'s
decreasing, and we set $m_0=n_s=0$. We call $n_0$ the {\em height} of
$\iniT$ and $m_s$ its {\em width}. We say that $\iniT$ is {\em
  determined} by $I$, or equivalently by $\bm E$.

For $i=1,\dots,s$, we set $d_i = m_i - m_{i-1}$, so that $m_i = d_1 +
\cdots + d_i$. Then, the cardinal $\delta$ of $\iniT$ can be written
as $\sum_{i=1}^s d_i n_{i-1}$; $\delta$ is also called the {\em
  degree} of $\bm E$. Similarly, for $i=1,\dots,s$, we write $e_i =
n_{i-1}-n_{i}$. These definitions are illustrated in
Figure~\ref{fig:example}, where the monomials in $\bm E$ are the
initial terms of the Gr\"obner basis in~\cref{eq:prim}.

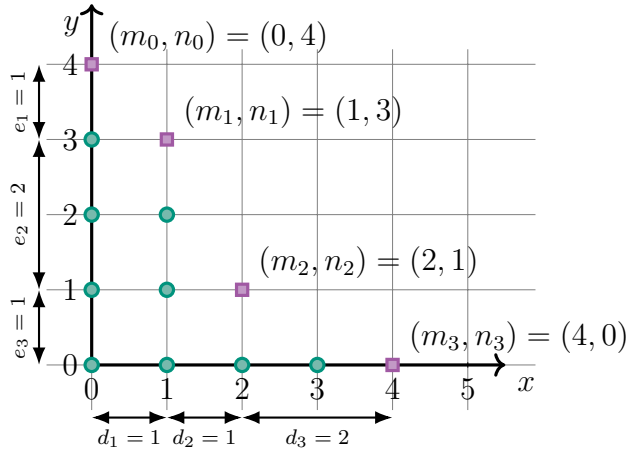
\begin{figure}[H]
\centering
  \begin{tikzpicture}
    [  
      roundnode/.style={circle, draw=PineGreen!90, fill=PineGreen!50, very thick, minimum size=5pt, inner sep=0pt, outer sep=0pt},
      square/.style={regular polygon,regular polygon sides=4, draw=Purple!90, fill=Purple!50, very thick, minimum size=6pt, inner sep=0pt, outer sep=0pt},
    ]
    \draw[step=1cm,gray,very thin] (-0.6,-0.6) grid (5.9,4.2);
    \draw[very thick,->] (-0.1,0) -- (5.5,0) node[anchor=north west] {$x$};
    \draw[very thick,->] (0,-0.1) -- (0,4.8) node[anchor=north east] {$y$};
    \foreach \x in {0,1,2,3,4,5} \draw (\x cm,1pt) -- (\x cm,-1pt) node[anchor=north] {$\x$};
    \foreach \y in {0,1,2,3,4} \draw (1pt,\y cm) -- (-1pt,\y cm) node[anchor=east] {$\y$};
    \node[roundnode] at (0,0) {};
    \node[roundnode] at (0,1) {};
    \node[roundnode] at (0,2) {};
    \node[roundnode] at (0,3) {};
    \node[square,label=10:{$(m_0,n_0)=(0,4)$},] at (0,4) {};
    \node[roundnode] at (1,0) {};
    \node[roundnode] at (1,1) {};
    \node[roundnode] at (1,2) {};
    \node[square,label=10:{$(m_1,n_1)=(1,3)$},] at (1,3) {};
    \node[roundnode] at (2,0) {};
    \node[square,label=10:{$(m_2,n_2)=(2,1)$},] at (2,1) {};
    \node[roundnode] at (3,0) {};
    \node[square,label=10:{$(m_3,n_3)=(4,0)$},] at (4,0) {};
    \draw[thick,{Latex}-{Latex},font=\scriptsize] (0,-0.7) -- node [below,midway] {$d_1=1$} (1,-0.7);
    \draw[thick,{Latex}-{Latex},font=\scriptsize] (1,-0.7) -- node [below,midway] {$d_2=1$} (2,-0.7);
    \draw[thick,{Latex}-{Latex},font=\scriptsize] (2,-0.7) -- node [below,midway] {$d_3=2$} (4,-0.7);
    \draw[thick,{Latex}-{Latex},font=\scriptsize] (-0.7,4) -- node [above,midway,rotate=90] {$e_1=1$} (-0.7,3);
    \draw[thick,{Latex}-{Latex},font=\scriptsize] (-0.7,3) -- node [above,midway,rotate=90] {$e_2=2$} (-0.7,1);
    \draw[thick,{Latex}-{Latex},font=\scriptsize] (-0.7,1) -- node [above,midway,rotate=90] {$e_3=1$} (-0.7,0);
  \end{tikzpicture}
  \caption{An initial segment $\iniT$ (green) and the monomials  $\bm E = (y^4,
xy^3, x^2y, x^4)$ (purple), with $s=3$ and $\delta=9$.}
  \label{fig:example}
\end{figure}

The cost analyses in this paper will be done using in particular the
parameters $s$ and $\delta$. If desired, one can simplify such
expressions using the following explicit upper bound for $s$.
\begin{lemma}\label{bound_s}
  The integer $s$ is in $O(\sqrt{\delta})$, and this bound is sharp in
  some instances.
\end{lemma}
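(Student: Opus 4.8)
The plan is to bound $s$ from below in terms of $\delta$ using only the monotonicity constraints on the exponents appearing in $\bm E$, then to check that a single explicit family already saturates the resulting bound. Recall that $m_0 = 0 < m_1 < \cdots < m_s$ and $n_0 > n_1 > \cdots > n_s = 0$, all $m_i,n_i$ being nonnegative integers. Strict monotonicity forces $d_i = m_i - m_{i-1} \ge 1$ for every $i$, and, since $n_{i-1} > n_i > \cdots > n_s = 0$ is a strictly decreasing run of $s-i+2$ nonnegative integers ending at $0$, its largest term satisfies $n_{i-1} \ge s-i+1$ for every $i \in \{1,\dots,s\}$.

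Next I would substitute these two inequalities into the formula $\delta = \sum_{i=1}^{s} d_i\, n_{i-1}$ established in the basic definitions, obtaining
\[
  \delta \;=\; \sum_{i=1}^{s} d_i\, n_{i-1} \;\ge\; \sum_{i=1}^{s} (s-i+1) \;=\; \frac{s(s+1)}{2} \;\ge\; \frac{s^2}{2},
\]
so that $s \le \sqrt{2\delta}$, which is exactly the claim $s \in O(\sqrt{\delta})$.

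For sharpness I would exhibit the ``diagonal staircase'': given $s$, take $\bm E = (y^s, x y^{s-1}, x^2 y^{s-2}, \dots, x^{s-1} y, x^s)$, i.e.\ $m_i = i$ and $n_i = s - i$, so that $d_i = 1$ and $n_{i-1} = s-i+1$ turn both inequalities above into equalities. The corresponding initial segment is $\{(a,b) \in \N^2 : a+b \le s-1\}$, which has cardinality $\delta = s(s+1)/2$; one also checks immediately that $\bm E$ is a minimal monomial generating set (no listed monomial divides another, as the $x$-exponents are strictly increasing while the $y$-exponents are strictly decreasing). Hence on this family $s = \Theta(\sqrt{\delta})$, so the estimate cannot be improved beyond a constant factor.

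I do not anticipate a genuine obstacle: the whole argument is two elementary inequalities plus one example. The only point deserving a line of justification is the bound $n_{i-1} \ge s-i+1$, which is simply the remark that a strictly decreasing sequence of nonnegative integers of a given length and with least term $0$ has a determined lower bound on its greatest term; everything else is the arithmetic progression sum.
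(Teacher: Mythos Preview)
Your proof is correct and follows essentially the same route as the paper: both use $d_i \ge 1$ and $n_{i-1} \ge s-i+1$ to bound $\delta \ge \binom{s+1}{2}$, and both use the diagonal staircase $\bm E = (y^s, xy^{s-1}, \dots, x^s)$ for sharpness. Your bound $\delta \ge s(s+1)/2$ is in fact slightly tighter than the paper's stated $\delta \ge s(s-1)/2$, but this is only a harmless arithmetic slip in the paper and does not affect the asymptotic conclusion.
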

\begin{proof}
  Start from the equality $\delta=\sum_{i=1}^s d_i n_{i-1}$, which
  implies $\delta \ge \sum_{i=1}^s n_{i-1}$. Since $n_s = 0$ and
  $n_{i-1} > n_i$, we get by induction $n_i \ge s-i$ for all $i$.
  This implies $\delta \ge s(s-1)/2$, so that $s$ is in
  $O(\sqrt{\delta})$. For the lower bound, for any integer $d$ we can
  take $\bm E = (x^i y^{d-i},\ i=0,\dots,d)$, for which $s=d$ and
  $\delta = d(d+1)/2$.
\end{proof}

\paragraph{Translates of an initial segment}
We will occasionally make use of the following construction. Let
$\iniT$ be a finite initial segment in $\N^2$, and suppose that
$\iniT$ is determined by a monomial ideal $I$, with minimal monomial
generators $\bm E$ as above. For $i=0,\dots,s$ we let
$\iniT_{\leftarrow i}$ be the initial segment determined by the colon
ideal $I : x^{m_i}$, with minimal monomial generators
$$ \bm E_{\leftarrow i}= (y^{n_i}, x^{m_{i+1}-m_i} y^{n_{i+1}}, \dots,
x^{m_{s-1}-m_i} y^{n_{s-1}},x^{m_s-m_i}).$$ The set $\iniT_{\leftarrow i}$
has height $n_i$ and width $m_s-m_i$; its cardinal will be written
$\delta_i$, and is equal to $\sum_{j=i+1}^{s} d_{j}
n_{j-1}$. We call $\iniT_{\leftarrow i}$ the $i$th {\em translate} of $\iniT$.

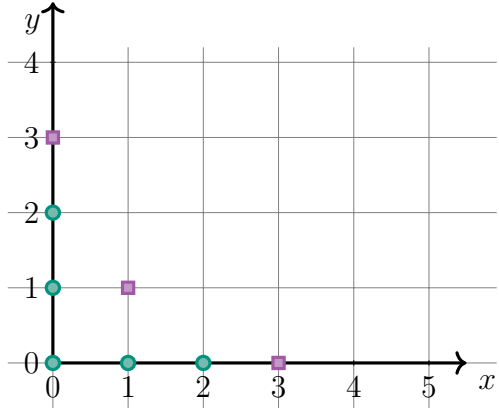
\begin{figure}[H]
\centering
  \begin{tikzpicture}
    [  
      roundnode/.style={circle, draw=PineGreen!90, fill=PineGreen!50, very thick, minimum size=5pt, inner sep=0pt, outer sep=0pt},
      square/.style={regular polygon,regular polygon sides=4, draw=Purple!90, fill=Purple!50, very thick, minimum size=6pt, inner sep=0pt, outer sep=0pt},
    ]
    \draw[step=1cm,gray,very thin] (-0.6,-0.6) grid (5.9,4.2);
    \draw[very thick,->] (-0.1,0) -- (5.5,0) node[anchor=north west] {$x$};
    \draw[very thick,->] (0,-0.1) -- (0,4.8) node[anchor=north east] {$y$};
    \foreach \x in {0,1,2,3,4,5} \draw (\x cm,1pt) -- (\x cm,-1pt) node[anchor=north] {$\x$};
    \foreach \y in {0,1,2,3,4} \draw (1pt,\y cm) -- (-1pt,\y cm) node[anchor=east] {$\y$};
    \node[roundnode] at (0,0) {};
    \node[roundnode] at (0,1) {};
    \node[roundnode] at (0,2) {};
    \node[square] at (0,3) {};
    \node[roundnode] at (1,0) {};
    \node[square] at (1,1) {};
    \node[roundnode] at (2,0) {};
    \node[square] at (3,0) {};
  \end{tikzpicture}
  \caption{The first translate $\iniT_{\leftarrow 1}$ of $\iniT$ from Figure~\ref{fig:example}.}
  \label{fig:translate}
\end{figure}

\paragraph{The shell of an initial segment} 
Let $\iniT$ be finite initial segment in $\N^2$. In this paragraph, we
define its {\em shell} $\iniT'$, which is another initial segment that
forms an outer approximation of $\iniT$ with few generators. The
definition and the lemma below are from~\cite[{\bf A.2}]{HyMeScSt19}.

As we did before, we let $$\bm E = (y^{n_0}, x^{m_1} y^{n_1}, \dots,
x^{m_{s-1}} y^{n_{s-1}},x^{m_s})$$ be the minimal monomial generating
set associated to $\iniT$. We define $\iniT'$ by introducing indices $i_{\sigma} <
i_{{\sigma}-1} < \cdots < i_0$, defined as follows. Set $i_0=s$. We let
$i_1 \ge 0$ be the largest index less than $i_0$ and such that
$m_{i_1} < m_{i_0}/2$, and iterate the process to define a sequence
$i_{\sigma} = 0 < i_{{\sigma}-1} < \cdots < i_0=s$.  We can then consider the
monomials
$$\bm E' =
(y^{n_{i_{\sigma}}},x^{m_{i_{{\sigma}-1}}}y^{n_{i_{{\sigma}-1}}},\dots,x^{m_{i_0}})
= (y^{n_0},x^{m_{i_{{\sigma}-1}}}y^{n_{i_{{\sigma}-1}}},\dots,x^{m_s}),$$ and
let $\iniT'$ be the initial segment determined by $\bm E'$.

\begin{lemma}\label{lemma:shell}
  The initial segment $\iniT'$ contains $\iniT$, its cardinal is
  at most $2 \delta$ and $\sigma$ is in $O(\log(\delta))$.
\end{lemma}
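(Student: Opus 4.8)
The plan is to prove the three assertions of the lemma separately, dealing with the substantive one — the cardinality bound — last.

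\emph{Containment and the bound on $\sigma$.} Every monomial in $\bm E'$ is one of the generators of $I$ listed in $\bm E$ (writing $m_0=0$, $n_s=0$ as usual), so $\langle\bm E'\rangle\subseteq\langle\bm E\rangle=I$; complementing the sets of exponents gives $\iniT\subseteq\iniT'$ at once. For $\sigma$, the defining inequalities $m_{i_k}<m_{i_{k-1}}/2$ (for $k=1,\dots,\sigma$) iterate to $m_{i_{\sigma-1}}<m_s/2^{\sigma-1}$; since $i_{\sigma-1}\ge 1$ we also have $m_{i_{\sigma-1}}\ge m_1\ge 1$, hence $2^{\sigma-1}\le m_s$. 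Combining with $m_s=\sum_k(m_k-m_{k-1})\le\sum_k(m_k-m_{k-1})\,n_{k-1}=\delta$ (each $n_{k-1}\ge 1$) yields $\sigma\le 1+\log_2\delta$, so $\sigma\in O(\log\delta)$.

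\emph{Cardinality.} The plan is to compare $\iniT$ and $\iniT'$ one horizontal row at a time, that is, for each fixed exponent $b$ of $y$. Reading off the divisibility conditions that define the two monomial ideals, one sees that for $0\le b<n_0$ the row $y=b$ meets $\iniT$ in exactly $m_\lambda$ points, where $\lambda$ is the least index $l$ with $n_l\le b$, and meets $\iniT'$ in exactly $m_{i_\kappa}$ points, where $\kappa$ is the greatest index $k$ with $n_{i_k}\le b$. Now partition $\{0,\dots,n_0-1\}$ into the blocks $B_k=\{b:\,n_{i_{k-1}}\le b<n_{i_k}\}$, $k=1,\dots,\sigma$. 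On $B_k$ one finds $\kappa=k-1$, so the $\iniT'$-row has width $m_{i_{k-1}}$; and $\lambda$ satisfies $i_k+1\le\lambda\le i_{k-1}$ (the lower bound because $n_{i_k}>b$, the upper because $n_{i_{k-1}}\le b$), so the $\iniT$-row has width $m_\lambda\ge m_{i_k+1}$.

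The crux — and the step I expect to need the most care — is the inequality $m_{i_k+1}\ge m_{i_{k-1}}/2$. It holds because $i_k$ is by construction the \emph{largest} index below $i_{k-1}$ with $m_{i_k}<m_{i_{k-1}}/2$: the index $i_k+1$ is therefore either equal to $i_{k-1}$, or lies strictly between $i_k$ and $i_{k-1}$ and hence violates the strict inequality; in either case $m_{i_k+1}\ge m_{i_{k-1}}/2$. Granting this, the $\iniT$-row is at least half as wide as the $\iniT'$-row for every $b$, and summing over the $n_0$ nonempty rows gives $\delta=|\iniT|\ge\tfrac12|\iniT'|$, i.e.\ $|\iniT'|\le 2\delta$. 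I would emphasize that slicing into rows is essential: a column-by-column comparison fails outright, because a single step of $\iniT'$ can be far wider than $\iniT$ is tall at any column in its range, so there is no per-column control; it is only the pair of constraints ``$m_{i_k}<m_{i_{k-1}}/2$ but $m_{i_k+1}\ge m_{i_{k-1}}/2$'' that, read horizontally, delivers a uniform factor-$2$ comparison of widths.
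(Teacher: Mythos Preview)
Your proof is correct. The paper does not give its own proof of this lemma; it simply cites \cite[{\bf A.2}]{HyMeScSt19}, so there is nothing to compare your argument against here. Your row-by-row comparison is exactly the right idea: the maximality in the definition of $i_k$ forces $m_{i_k+1}\ge m_{i_{k-1}}/2$, which gives the uniform factor-two control on row widths, and the bound $\sigma\le 1+\log_2 m_s\le 1+\log_2\delta$ follows directly from iterating $m_{i_k}<m_{i_{k-1}}/2$. One very minor remark: the iterated inequality $m_{i_{\sigma-1}}<m_s/2^{\sigma-1}$ is only meaningful for $\sigma\ge 2$, but the case $\sigma=1$ is trivial, so this is harmless.
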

%% \begin{proof}
%% The first claim is straightforward. For the bound on the cardinaly $\Delta$ of $\iniT'$, note
%% that the cardinal $\delta$ of $\iniT$ can be written as
%% $\sum_{r=0}^{{\sigma}-1} \sum_{i=i_{r+1}+1}^{i_{r}} m_i (n_{i-1}-n_i)$;
%% this is simply counting the number of terms along the rows. For a
%% given $r$, all indices $i$ in the inner sum are such that $m_i \ge
%% m_{i_r}/2$, so the sum is at least $1/2 \sum_{r=0}^{{\sigma}-1} m_{i_r}
%% (n_{i_{r+1}}-n_{i_r})$, which is $\Delta/2$.  Hence, $\delta \ge 1/2
%% \Delta$, that is, $\Delta\le 2 \delta$.

%% To estimate the number ${\sigma}$, the inequalities $m_{i_{r+1}} <
%% m_{i_r}/2$ for all $r \le {\sigma-1}$ imply that $m_{i_{{\sigma}-1}} <
%% m_{s}/2^{\sigma-1}$. We deduce that $2^{\sigma-1} \le
%% m_s/m_{i_{{\sigma}-1}} \le m_s$ (since $m_{i_{{\sigma}-1}} \ge 1$),
%% which itself is at most $\delta$.  Thus, ${\sigma}$ is in
%% $O(\log(\delta))$.
%% \end{proof}

\begin{figure}[H]
\centering
  \begin{tikzpicture}
    [  
      roundnode/.style={circle, draw=PineGreen!90, fill=PineGreen!50, very thick, minimum size=5pt, inner sep=0pt, outer sep=0pt},
      square/.style={regular polygon,regular polygon sides=4, draw=Purple!90, fill=Purple!50, very thick, minimum size=6pt, inner sep=0pt, outer sep=0pt},
    ]
    \draw[step=1cm,gray,very thin] (-0.6,-0.6) grid (5.9,4.2);
    \draw[very thick,->] (-0.1,0) -- (5.5,0) node[anchor=north west] {$x$};
    \draw[very thick,->] (0,-0.1) -- (0,4.8) node[anchor=north east] {$y$};
    \foreach \x in {0,1,2,3,4,5} \draw (\x cm,1pt) -- (\x cm,-1pt) node[anchor=north] {$\x$};
    \foreach \y in {0,1,2,3,4} \draw (1pt,\y cm) -- (-1pt,\y cm) node[anchor=east] {$\y$};
    \node[roundnode] at (0,0) {};
    \node[roundnode] at (0,1) {};
    \node[roundnode] at (0,2) {};
    \node[roundnode] at (0,3) {};
    \node[square] at (0,4) {};
    \node[roundnode] at (1,0) {};
    \node[roundnode] at (1,1) {};
    \node[roundnode] at (1,2) {};
    \node[square] at (1,3) {};
    \node[roundnode] at (2,0) {};
    \node[roundnode] at (2,1) {};
    \node[roundnode] at (2,2) {};
    \node[roundnode] at (3,0) {};
    \node[roundnode] at (3,1) {};
    \node[roundnode] at (3,2) {};
    \node[square] at (4,0) {};
  \end{tikzpicture}
  \caption{The shell of $\iniT$ from Figure~\ref{fig:example}.}
  \label{fig:example_shell}
\end{figure}

In our pseudo-code, we will write $\iniT' \gets \textsc{Shell}(\iniT)$
to indicate that $\iniT'$ is the shell of $\iniT$. The algorithm
$\textsc{Shell}$ does not use any base field or base ring operation,
only index manipulations (in particular, it does not show up
in our cost analyses).

%% (so this constant cost will be in our runtime analyses).

%%%%%%%%%%%%%%%%%%%%%%%%%%%%%%%%%%%%%%%%%%%%%%%%%%%%%%%%%%%%

\subsection{Structured polynomial multiplication}

%% Suppose that ${\mathcal G}=(G_0,\dots,G_s)$ is a lexicographic Gr\"obner
%% basis in $\K[x,y]$, with initial segment $T \subset \N^2$.  In this
%% section and the two following ones, we develop an algorithm for the
%% following task: given $A$ and $B$ in $\K[x,y]_T$, perform the modular
%% multiplication $D = AB \bmod \langle {\mathcal G} \rangle \in \K[x,y]_T$.
%% The first stage computes $C = AB$ in $\K[x,y]$, and $D$ is then
%% obtained as the remainder in the polynomial division of $C$ by $\bm
%% G$.

We now prove two propositions regarding polynomial multiplication in
$\A[x,y]$, for an arbitrary ring $\A$, which will be the basis of the
runtime analysis of several algorithms.  We mention in all
propositions below that the algorithms in this section only use
additions and multiplications in $\A$, as we will need this property
in the sequel. In what follows, given two sets $\iniS,\iniT$ in
$\N^2$, $\iniS+\iniT$ denotes their Minkowski sum.

The main prerequisite is the following fact: if $\iniS \subset \N^2$
is a rectangle, given $A$ and $B$ in $\A[x,y]_\iniS$, we can compute
$AB \in \A[x,y]_{\iniS+\iniS}$ using $\softO(|\iniS|)$ operations
$(+,\times)$ in $\A$: if $\iniS$ contains the origin, this is done
using Kronecker substitution to reduce to multiplication in~$\A[x]$,
see~\cite[Corollary~8.28]{GaGe13}; in the general case, we reduce to
the situation where $\iniS$ contains the origin by factoring out $x^u
y^v$ from $A$ and $B$, with $(u,v)$ being the unique minimal element
of~$\iniS$.

This being said, the first result we highlight here gives the cost of
computing the product $AB$, for $A$ and $B$ supported on the same
initial segment $\iniT$. Note that $AB$ is supported on $\iniT+\iniT$, and that if
$\iniT$ has height $n$ and width $m$, $\iniT+\iniT$ has cardinal
$\Theta(nm)$. Indeed, this set contains the rectangle $\{0,\dots,m-1\}
\times \{0,\dots,n-1\}$ of cardinal $nm$, and is contained in the
rectangle $\{0,\dots,2m-2\} \times \{0,\dots,2n-2\}$ of cardinal
less than $4n m$, so that $|\iniT+\iniT| \in \Theta(n m)$. This is to be
contrasted with the cardinal of $\iniT$ itself, which can range
anywhere between $n+m$ and $nm$.

\begin{proposition}\label{prop:mulTT}
  Consider a finite initial segment $\iniT \subset \N^2$, of height $n$ and
  width $m$. Given $A$ and $B$ in $\A[x,y]_\iniT$, one can compute $AB$
  using $\softO(|\iniT+\iniT|)=\softO(n m)$ operations  $(+,\times)$ in $\A$.
\end{proposition}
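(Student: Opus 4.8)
The plan is to reduce the product of two polynomials supported on an initial segment $\iniT$ to a small number of rectangular multiplications, each of which can be handled by the Kronecker-substitution primitive quoted just above the statement. The natural decomposition is the "staircase" one: write $\iniT$ as a disjoint union of the horizontal strips $S_i = \{m_{i-1},\dots,m_i-1\} \times \{0,\dots,n_{i-1}-1\}$ for $i=1,\dots,s$, where $\bm E = (y^{n_0},x^{m_1}y^{n_1},\dots,x^{m_s})$ is the minimal generating set of the associated monomial ideal. Splitting $A = \sum_{i=1}^s A_i$ and $B=\sum_{j=1}^s B_j$ accordingly, we have $AB = \sum_{i,j} A_i B_j$, and each $A_i B_j$ is a product of two polynomials supported on rectangles, hence computable by the primitive. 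The issue with this naive approach is that there are $\Theta(s^2)$ such products, and a crude bound on each would give something like $\softO(s^2 nm)$ — too large by a factor $s^2$, which by \autoref{bound_s} can be as big as $\delta$.

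To get the claimed $\softO(nm)$ bound, the key observation is that although $S_i$ and $S_j$ are both contained in the full $n\times m$ box, their Minkowski sums overlap economically. First I would note that it suffices to bound $\sum_{i,j} |S_i + S_j|$ by $\softO(nm)$, since the primitive costs $\softO(|S_i+S_j|)$ on each pair and these cover $\iniT+\iniT$. Now $S_i$ is a rectangle of dimensions $d_i \times n_{i-1}$ (where $d_i = m_i-m_{i-1}$), so $|S_i+S_j| \le (d_i+d_j)(n_{i-1}+n_{j-1}) \le 2 d_i n_{i-1} + 2 d_i n_{j-1} + 2 d_j n_{i-1} + 2 d_j n_{j-1}$ after expanding and using symmetry; actually a cleaner route is $|S_i + S_j| \le (d_i + d_j)\cdot n$ since the height of any such sum is at most $2\max(n_{i-1},n_{j-1}) - 1 < 2n$ — wait, I want the $m$ direction controlled too. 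Summing $(d_i+d_j)$ over all $i,j$ gives $2s\sum_i d_i = 2sm$, which reintroduces a factor $s$. So the honest accounting is: $|S_i+S_j| \le (d_i+d_j)(n_{i-1}+n_{j-1})$, and $\sum_{i,j}(d_i+d_j)(n_{i-1}+n_{j-1}) = \sum_{i,j}\bigl(d_i n_{i-1} + d_i n_{j-1} + d_j n_{i-1} + d_j n_{j-1}\bigr)$; the first and last terms sum to $2s\delta$, while the cross terms sum to $2\bigl(\sum_i d_i\bigr)\bigl(\sum_j n_{j-1}\bigr) \le 2m\cdot sn$ — still not good enough directly.

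The fix is to not multiply strips pairwise at full width but to use the \textbf{translates} $\iniT_{\leftarrow i}$ introduced in the excerpt: group the computation so that the $i$th strip of $A$ is multiplied against the \emph{tail} $B^{(\ge i)} := \sum_{j\ge i} B_j$, which is supported on a translate of $\iniT_{\leftarrow i-1}$ (shifted by $x^{m_{i-1}}$), and symmetrically; this is exactly the telescoping structure that makes quasi-linear reduction algorithms for such staircases work. Concretely, I would write $AB = \sum_i A_i\,B^{(\ge i)} + \sum_i A^{(>i)} B_i$ (splitting each $A_iB_j$ according to whether $i\le j$ or $i>j$), where $A^{(>i)} = \sum_{j>i}A_j$. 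Each $A_i$ is an $(d_i\times n_{i-1})$ rectangle and $B^{(\ge i)}$ is supported in a $((m_s - m_{i-1})\times n_{i-1})$ box, so their product is computed in $\softO(d_i\cdot(m_s-m_{i-1})) \le \softO(d_i\, m)$ — but again $\sum_i d_i m = m^2$, not $nm$. Hmm. This means I should instead bound $B^{(\ge i)}$ by its true support $\iniT_{\leftarrow i-1}$ of cardinal $\delta_{i-1}$ and invoke the rectangular primitive on the \emph{bounding box of} $S_i + \iniT_{\leftarrow i-1}$, whose cardinal is $\Theta(d_i \cdot n_{i-1})$ in... no.

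Let me re-plan the last paragraph cleanly, because the telescoping must be by columns, matching the outer shape $n\times m$. The cleanest correct plan: decompose $\iniT$ into its $s$ \emph{vertical} strips $C_i = \{0,\dots,m_i-1\}\times\{n_i,\dots,n_{i-1}-1\}$, a rectangle of dimensions $m_i \times e_i$ with $e_i = n_{i-1}-n_i$. Then $\sum_i m_i e_i \le m\sum_i e_i = m n_0 = mn$, so the strips already have total cardinal $\le mn$; the point is that $C_i + C_j$ is a rectangle of dimensions $(m_i+m_j-1)\times(e_i+e_j-1) \le 2\max(m_i,m_j)\times(e_i+e_j)$, and since all $C_i$ sit inside the $n\times m$ box we get $|C_i+C_j| \le 2m(e_i+e_j)$; summing over $i,j$ gives $2m\cdot 2s\sum_i e_i = 4smn$. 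Still the stray $s$. The only way I see to genuinely kill the $s$ is a divide-and-conquer on the staircase: split $\iniT$ into two halves along $x = m_{\lfloor s/2\rfloor}$ into an initial segment (lower-left block) and a translate of a smaller one, multiply recursively, and combine; at each level the total support handled is $\Theta(nm)$ and there are $O(\log s)$ levels, giving $\softO(nm)$. The main obstacle, and the thing I would spend the most care on, is precisely this: showing that the pairwise-strip sums have total size $\softO(nm)$ (or equivalently that the recursion balances), because the diagonal-plus-cross-term bookkeeping above shows the naive pairing loses a factor $s$, and recovering quasi-linearity genuinely requires either a logarithmic-depth divide-and-conquer over the staircase or the shell/translate machinery of \autoref{lemma:shell} to replace the $s$ generators by $O(\log\delta)$ many.

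Given the uncertainty in my reconstruction, here is what I'd actually commit to in the paper: prove $|\iniT+\iniT| = \Theta(nm)$ (already done in the text preceding the statement), then realize the multiplication by the shell: replace $\iniT$ by $\iniT' = \textsc{Shell}(\iniT)$, which has $\sigma = O(\log\delta) = O(\log(nm))$ generators and cardinal $\le 2\delta$, decompose $\iniT'$ into $\sigma$ horizontal strips, perform the $O(\sigma^2) = O(\log^2(nm))$ rectangular products via Kronecker substitution, each of cost $\softO$ of its output box which is contained in $\iniT'+\iniT' \subseteq$ the $2n\times 2m$ box and whose union is $\iniT'+\iniT'$ of cardinal $\Theta(nm)$ — with only $O(\log^2)$ strips, the crude "each product costs $\softO(nm)$" bound now gives $\softO(nm)$ overall, since $\log^2(nm)$ is absorbed into $\softO$. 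Finally restrict the result to $\iniT+\iniT$. The one technical point to verify is that the $\sigma$ horizontal strips of $\iniT'$ are rectangles with the origin-translation handled, and that their pairwise Minkowski sums are each contained in the $2n \times 2m$ box so that each Kronecker multiplication indeed costs $\softO(nm)$; this is immediate from $\iniT' \subseteq \{0,\dots,m-1\}\times\{0,\dots,n-1\}$... wait, that inclusion is false in general ($\iniT'$ can be strictly larger than $\iniT$). It is contained in the box $\{0,\dots,m_s-1\}\times\{0,\dots,n_0-1\}$ though, which is exactly the $m\times n$ box, since $\iniT'$ shares the extreme generators $y^{n_0}$ and $x^{m_s}$ with $\iniT$. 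That suffices.
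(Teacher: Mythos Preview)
Your final approach via the shell is correct, but you have made this far harder than necessary. The paper's proof is one line: since $\iniT$ has height $n$ and width $m$, it is contained in the rectangle $\iniS = \{0,\dots,m-1\}\times\{0,\dots,n-1\}$, so $A$ and $B$ are already supported on a common rectangle of cardinal $nm$, and a \emph{single} Kronecker-substitution multiplication on $\iniS$ does the job in $\softO(|\iniS+\iniS|) = \softO(nm)$ operations. No decomposition into strips, no shell, no divide-and-conquer is needed.

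The irony is that you essentially rediscovered this at the very end when you noted that $\iniT'$ sits inside the $m\times n$ box---but the same is true, more obviously, of $\iniT$ itself, and that observation alone finishes the proof. All of your effort with strips, translates, and the shell was aimed at avoiding a factor of $s$ that never actually arises if you just pad both inputs to the bounding box and multiply once. The shell machinery and the more delicate accounting you sketch are genuinely needed for the \emph{next} proposition (\cref{prop:mulST}), where one factor is supported on an arbitrary rectangle and the other on $\iniT$; there the naive ``pad $\iniT$ to its bounding box'' move can overshoot $|\iniS+\iniT|$ badly, and the paper does use the shell to keep the number of strip products logarithmic. But for \cref{prop:mulTT}, where both factors live in $\iniT$ and the target cost is $\softO(nm)$ rather than $\softO(\delta)$, the bounding box is already tight enough.
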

\begin{proof}
  Let $\iniS$ be the rectangle $\{0,\dots,m-1\} \times \{0,\dots,n-1\}$,
  so that $\iniS$ contains $\iniT$. Then, $A$ and $B$ are in $\A[x,y]_\iniS$, so
  we can multiply them using $\softO(|\iniS+\iniS|)=\softO(n m)$ operations $(+,\times)$ in
  $\A$ with Kronecker substitution, as pointed out above, and this
  runtime is also $\softO(|\iniT+\iniT|)$.
\end{proof}

Our second proposition gives an algorithm to compute $AB \in \A[x,y]$,
where $A$ is supported on a rectangle containing the origin and $B$ on
an initial segment.

\begin{proposition}\label{prop:mulST}
  Consider a rectangle $\iniS \subset \N^2$ and a finite initial segment
  $\iniT\subset \N^2$. Given $A$ in $\A[x,y]_\iniS$ and $B$ in $\A[x,y]_\iniT$,
  one can compute $AB$ using $\softO(|\iniS+\iniT|)$ operations  $(+,\times)$ in $\A$.
\end{proposition}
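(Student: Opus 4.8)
The plan is to reduce the general case, where $\iniS$ is an arbitrary rectangle through the origin and $\iniT$ is an arbitrary finite initial segment, to the special rectangle-times-rectangle case already recalled before Proposition~\ref{prop:mulTT}, but done \emph{block by block} so that the total work telescopes to $\softO(|\iniS+\iniT|)$ rather than the wasteful $\softO(|\iniS|\cdot|\iniT|)$ one would get from a naive split. Concretely, write $\iniS = \{0,\dots,a-1\}\times\{0,\dots,b-1\}$, and let $\bm E=(y^{n_0},x^{m_1}y^{n_1},\dots,x^{m_s})$ be the minimal generators of the monomial ideal determining $\iniT$, so that $\iniT$ is the disjoint union of the ``columns of slices'' $C_i = \{m_{i-1},\dots,m_i-1\}\times\{0,\dots,n_{i-1}-1\}$ for $i=1,\dots,s$ (with $m_0=0$); each $C_i$ is a rectangle of width $d_i=m_i-m_{i-1}$ and height $n_{i-1}$, and $\sum_i d_i n_{i-1} = |\iniT| = \delta$. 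Decompose $B = \sum_{i=1}^s B_i$ with $B_i = B$ restricted to $C_i$, and compute each product $A B_i$ by the Kronecker-substitution rectangle multiplication (after factoring out the minimal monomial $x^{m_{i-1}}$ from $B_i$), then sum. This is clearly correct since multiplication is bilinear.

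The key step is the cost bookkeeping. The product $A B_i$ is supported on $\iniS + C_i$, a rectangle of width $a+d_i-1$ and height $b+n_{i-1}-1$, so by the rectangle-multiplication fact each $A B_i$ costs $\softO((a+d_i)(b+n_{i-1}))$ operations in $\A$, and the final summation costs no more. I would then bound $\sum_{i=1}^s (a+d_i)(b+n_{i-1})$. Expanding, this is $\sum_i\big(ab + a n_{i-1} + b d_i + d_i n_{i-1}\big) = s\,ab + a\sum_i n_{i-1} + b\sum_i d_i + \sum_i d_i n_{i-1} = s\,ab + a\sum_i n_{i-1} + b\,m_s + \delta$. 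Now one uses the two basic comparisons with $|\iniS+\iniT|$: first, $\iniS+\iniT$ contains the ``staircase'' $\iniT$ shifted to make room, and more usefully it contains, for each $i$, the translate $(m_{i-1},0)+\iniS$ together with $(0,0)+C_i$-type blocks, giving $|\iniS+\iniT| \ge \max(ab,\, a n_0,\, b m_s,\, \delta)$ and, crucially, $|\iniS+\iniT|\ge a\sum_{i}n_{i-1}$ is \emph{not} immediate, so I need a sharper geometric argument here.

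I expect that last inequality — controlling $s\,ab + a\sum_{i=1}^s n_{i-1}$ by $O(|\iniS+\iniT|)$ — to be the main obstacle, since $s$ can be as large as $\Theta(\sqrt\delta)$ and $\sum_i n_{i-1}$ can likewise be large. The resolution I have in mind is to choose the block decomposition of $\iniT$ more cleverly, using the \emph{shell} $\iniT'$ from Lemma~\ref{lemma:shell}: the shell has only $\sigma\in O(\log\delta)$ generators, contains $\iniT$, and has cardinal at most $2\delta$. One replaces the $s$ slices $C_i$ by the $\sigma$ slices coming from $\bm E'$; each slice $C'_j$ of the shell has width $w_j$ and height $h_j$ with the widths roughly doubling, so $A B$ (computed via $\iniT'$, which is harmless since it only enlarges the support) costs $\sum_{j=1}^\sigma \softO((a+w_j)(b+h_j))$. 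Now $\sigma\,ab \in O(\log(\delta)\,ab)\subseteq \softO(|\iniS+\iniT|)$ since $|\iniS+\iniT|\ge ab$; the terms $\sum_j a h_j$ and $\sum_j b w_j$ are each bounded by $\softO(|\iniS+\iniT'|)=\softO(|\iniS+\iniT|)$ because the shell's doubling structure makes $\sum_j w_j = O(m_s)$ and makes $\sum_j a h_j$ telescope against the area of $\iniS+\iniT'$ (each horizontal strip of $\iniS+\iniT'$ at height level near $h_j$ has width $\ge a$); and $\sum_j w_j h_j \le |\iniT'| \le 2\delta \le 2|\iniS+\iniT|$. Summing, the cost is $\softO(|\iniS+\iniT|)$, as claimed. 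The one routine point to verify is that, throughout, only $(+,\times)$ operations in $\A$ are used — true because Kronecker substitution and polynomial multiplication over $\A[x]$ use only such operations, and factoring out a monomial and adding up the blocks does too.
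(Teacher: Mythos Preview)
Your proposal is correct and follows essentially the same approach as the paper: decompose $B$ into column slices, apply Kronecker multiplication to each $A B_i$, and then replace $\iniT$ by its shell $\iniT'$ (Lemma~\ref{lemma:shell}) so that the number of slices drops to $\sigma\in O(\log\delta)$ and the troublesome $s\,ab$ and $a\sum_i n_{i-1}$ terms become $\softO(|\iniS+\iniT|)$. The paper streamlines your ``telescoping'' step for $\sum_j a h_j$ by simply bounding each $h_j\le n_0$ and using the explicit formula $|\iniS+\iniT|=(a-1)(b-1)+(a-1)n_0+m_s(b-1)+\delta$, so that $\sigma\,a\,n_0\in\softO(|\iniS+\iniT|)$ directly.
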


Without loss of generality, we assume that $\iniS$ contains the origin
$(0,0)$; if not, as above, factor out the monomial $x^uy^v$ from $A$,
with $(u,v)$ the minimal element in $\iniS$. We can thus suppose that
$\iniS$ is the rectangle $\{0,\dots,\ell-1\}\times \{0,\dots,h-1\}$,
for some integers $\ell,h \ge 1$, so in particular $|\iniS|=\ell h$,
and that $\iniT$ is an initial segment of cardinal $|\iniT|=\delta$,
with height $n$ and width $m$.

If $\A$ is a field of characteristic zero, this result follows
directly from the sparse evaluation and interpolation algorithms
of~\cite{CaKaYa89}. More generally, if $\A$ is a field of cardinal at
least $\max(\ell+m,h+n)-1$, this is also the case, using the algorithm
in~\cite{HoSc13}. The algorithm below achieves the same asymptotic
runtime, without assumption on $\A$. The proof is slightly more
involved than that of the previous proposition, and occupies the rest
of this section.

\paragraph{An algorithm when $\iniT$ is a rectangle} 
Suppose first that $\iniT=\{0,\dots,m-1\}\times \{0,\dots,n-1\}$, so that
$\delta = n m$; then the cardinal of $\iniS+\iniT$ is
$(\ell+m-1)(h+n-1)$. 

Take $A$ in $\A[x,y]_\iniS$ and $B$ in $\A[x,y]_\iniT$. Then, both $A$
and $B$ are in $\A[x,y]_{\iniS+\iniT}$. Since $\iniS+\iniT$ is a
rectangle, we saw in the preamble of this section that using
Kronecker's substitution, we can compute their product using
$\softO(|\iniS+\iniT|)= \softO((\ell+m-1)(h+n-1))$ operations
$(+,\times)$ in $\A$. In the main algorithm below, this is written
$\textproc{KroneckerMultiply}(A,B)$.

\paragraph{A first general algorithm} We now suppose that $\iniT$ is an arbitrary
initial segment, and that it is determined by the monomials
$$\bm E = (y^{n_0}, x^{m_1} y^{n_1}, \dots, x^{m_{s-1}}
y^{n_{s-1}},x^{m_s}),$$ with the $m_i$'s increasing, the $n_i$'s
decreasing, and $m_0=n_s=0$; note that we also have $n_0=n$ and
$m_s=m$. As before, for $i=1,\dots,s$, we set $d_i = m_i - m_{i-1}$,
so that $m_i = d_1 + \cdots + d_i$.

The input $B \in \A[x,y]_\iniT$ can then be written as $B=\sum_{0 \le i <
  s} B_i x^{m_i}$, with $B_i$ supported on $\iniT_i =
\{0,\dots,d_{i+1}-1\} \times \{0,\dots,n_i-1\}$. To compute $AB$, with
$A$ in $\A[x,y]_\iniS$, we thus compute all $A B_i$ and add up the
results.

\begin{algorithm}
 \algoCaptionLabel{MultiplyNaive}{A,\iniS,B,\iniT}
  \begin{algorithmic}[1]
    \Require $A$ in $\A[x,y]_\iniS$, $B$ in $\A[x,y]_\iniT$
    \Ensure $AB$ in $\A[x,y]_{\iniS+\iniT}$
    \State write $B=B_0 + B_1 x^{m_1} + \cdots + B_{s-1} x^{m_{s-1}}$ with $B_i \in \A[x,y]_{\{0,\dots,d_{i+1}-1\}
\times \{0,\dots,n_i-1\}}$ for all~$i$
    \State \InlineFor{$i=0,\dots,s-1$}{$C_i \gets \textproc{KroneckerMultiply}(A,B_i)$}
    \State \Return $C_0 + C_1 x^{m_1} + \cdots + C_{s-1} x^{m_{s-1}}$
  \end{algorithmic}
\end{algorithm}
By the result in the previous paragraph, each product $A B_i$ can be computed
in $$ \softO((\ell+d_{i+1}-1)(h+n_i-1))=\softO((\ell-1)(h-1)
+(\ell-1)n_i + d_{i+1}(h-1) + d_{i+1} n_i)$$ operations in $\A$, and
the cost of adding this product to the final result fits into the same
bound.  Using the inequality $n_i \le n_0=n$ for all $i$, as well as
$d_1+\cdots+d_s = m_s = m$ and $d_1 n_0 + \cdots + d_s n_{s-1}=\delta$
(the cardinal of $\iniT$), we see that the total cost is $$\softO(
s(\ell-1)(h-1)+ s (\ell-1) n + m (h-1) + \delta).$$

On the other hand, we can determine the cardinal of the sum $\iniU=\iniS+\iniT$ as
follows. The set $\iniU$ is the disjoint union of the following sets:
\begin{itemize}
\item $\iniU_1 = \{0,\dots,\ell-2\}\times \{0,\dots,h-2\}$, 
\item $\iniU_2 = (0,h-1) + \{0,\dots,\ell-2\}\times \{0,\dots,n-1\}$
\item $\iniU_3 = (\ell-1,0) + \{0,\dots,m-1\}\times \{0,\dots,h-2\}$
\item $\iniU_4 = (\ell-1,h-1) + \iniT$.
\end{itemize}
This is established by taking $(i,j)$ in $\iniS$, $(v,w)$ in $\iniT$, and
discussing according to the signs of $v-(\ell-1-i)$ and $w-(h-1-j)$.
As a result, we obtain
$$|\iniS+\iniT|=(\ell-1) (h-1)+ (\ell-1)
n + m (h-1) + \delta.$$
\begin{figure}[H]
\centering
\begin{tikzpicture}
  [  
    blacknode/.style={regular polygon,regular polygon sides=4, draw=black, fill=black, very thick, minimum size=6pt, inner sep=0pt, outer sep=0pt},
    square/.style={regular polygon,regular polygon sides=4, draw=Purple!90, fill=Purple!50, very thick, minimum size=6pt, inner sep=0pt, outer sep=0pt},
    lightsquare/.style={regular polygon,regular polygon sides=4, draw=Blue!90, fill=Blue!50, very thick, minimum size=6pt, inner sep=0pt, outer sep=0pt},
    roundnode/.style={regular polygon,regular polygon sides=4, draw=PineGreen!90, fill=PineGreen!50, very thick, minimum size=6pt, inner sep=0pt, outer sep=0pt},
    rednode/.style={regular polygon,regular polygon sides=4, draw=Red!90, fill=Red!50, very thick, minimum size=6pt, inner sep=0pt, outer sep=0pt},
  ]
  \draw[step=1cm,gray,very thin] (-0.6,-0.6) grid (1.9,1.9);
  \draw[very thick,->] (-0.1,0) -- (1.5,0) node[anchor=north west] {$x$};
  \draw[very thick,->] (0,-0.1) -- (0,1.8) node[anchor=north east] {$y$};
  \foreach \x in {0,1} \draw (\x cm,1pt) -- (\x cm,-1pt) node[anchor=north] {$\x$};
  \foreach \y in {0,1} \draw (1pt,\y cm) -- (-1pt,\y cm) node[anchor=east] {$\y$};
  \node[blacknode] at (0,0) {};
  \node[blacknode] at (0,1) {};
  \node[blacknode] at (1,0) {};
  \node[blacknode] at (1,1) {};

  \draw[step=1cm,gray,very thin] (2.4,-0.6) grid (6.1,2.9);
  \draw[very thick,->] (2.9,0) -- (5.5,0) node[anchor=north west] {$x$};
  \draw[very thick,->] (3,-0.1) -- (3,2.8) node[anchor=north east] {$y$};
  \foreach \x in {0,1,2} \draw ({{(3+\x)}},1pt) -- ({{(3+\x)}},-1pt) node[anchor=north] {$\x$};
  \foreach \y in {0,1,2} \draw (3cm + 1pt,\y cm) -- (3cm - 1pt,\y cm) node[anchor=east] {$\y$};
  \node[blacknode] at (3,0) {};
  \node[blacknode] at (3,1) {};
  \node[blacknode] at (3,2) {};
  \node[blacknode] at (4,0) {};
  \node[blacknode] at (5,0) {};

  \draw[step=1cm,gray,very thin] (6.4,-0.6) grid (11.1,3.9);
  \draw[very thick,->] (6.9,0) -- (10.5,0) node[anchor=north west] {$x$};
  \draw[very thick,->] (7,-0.1) -- (7,3.8) node[anchor=north east] {$y$};
  \foreach \x in {0,1,2,3} \draw ({{(7+\x)}},1pt) -- ({{(7+\x)}},-1pt) node[anchor=north] {$\x$};
  \foreach \y in {0,1,2,3} \draw (7cm + 1pt,\y cm) -- (7cm - 1pt,\y cm) node[anchor=east] {$\y$};
  \node[square] at (7,0) {};

  \node[roundnode] at (7,1) {};
  \node[roundnode] at (7,2) {};
  \node[roundnode] at (7,3) {};

  \node[lightsquare] at (8,0) {};
  \node[lightsquare] at (9,0) {};
  \node[lightsquare] at (10,0) {};

  \node[rednode] at (8,1) {};
  \node[rednode] at (8,2) {};
  \node[rednode] at (8,3) {};
  \node[rednode] at (9,1) {};
  \node[rednode] at (10,1) {};

  \node[rednode, label=east:$\iniU_4$] at (11.5,0.5) {};
  \node[lightsquare, label=east:$\iniU_3$] at (11.5,1.5) {};
  \node[roundnode, label=east:$\iniU_2$] at (11.5,2.5) {};
  \node[square, label=east:$\iniU_1$] at (11.5,3.5) {};
  
\end{tikzpicture}
\label{fig:Minkowski}
\caption{the sets $\iniS$, $\iniT$ and $\iniU=\iniS+\iniT$, with $\ell=h=2$ and $n=m=3$.
}
\end{figure}

\paragraph{The main algorithm.}
The runtime reported above does not fit in the target cost
$\softO(|\iniS+\iniT|)$, as $s$ could be large. To circumvent this
issue, we apply the algorithm of the previous paragraph, but we
replace $\iniT$ by its shell $\iniT'$. We know (\cref{lemma:shell})
that the cardinal of $\iniT'$ is at most $2\delta$, that its width and
height are the same as those of $\iniT$, and that it is generated by
$\sigma \in O(\log(s)) \subset O(\log(\delta))$ terms.

\begin{algorithm}[H]
  \algoCaptionLabel{Multiply}{A,\iniS,B,\iniT}
  \begin{algorithmic}[1]
    \Require $A$ in $\A[x,y]_\iniS$, $B$ in $\A[x,y]_\iniT$
    \Ensure $AB$ in $\A[x,y]_{\iniS+\iniT}$
    \State $\iniT' \gets \textproc{Shell}(\iniT)$
    \State \Return $\Call{algo:MultiplyNaive}{A,\iniS,B,\iniT'}$
  \end{algorithmic}
\end{algorithm}

The algorithm of the previous paragraph still applies (since $\iniT$
is contained in $\iniT'$), and its runtime is then $\softO(
(\ell-1)(h-1)\log(\delta)+ (\ell-1) n\log(\delta) + m (h-1) + \delta)$
operations $(+,\times)$ in $\A$. Since we saw that
$|\iniS+\iniT|=(\ell-1) (h-1)+ (\ell-1) n + m (h-1) + \delta$, the
above expression is indeed in $\softO(|\iniS+\iniT|)$. This finishes
the proof of \cref{prop:mulST}.

%%%%%%%%%%%%%%%%%%%%%%%%%%%%%%%%%%%%%%%%%%%%%%%%%%%%%%%%%%%%
%%%%%%%%%%%%%%%%%%%%%%%%%%%%%%%%%%%%%%%%%%%%%%%%%%%%%%%%%%%%
%%%%%%%%%%%%%%%%%%%%%%%%%%%%%%%%%%%%%%%%%%%%%%%%%%%%%%%%%%%%

\section{Lexicographic Gr\"obner bases}\label{sec:puncGB}

In this section, we first review Lazard's structure
theorem~\cite{Lazard85} for lexicographic Gr\"obner bases in
$\K[x,y]$, for a field $\K$, then a parametrization of such bases due
to \cite{CoVa08}. While the core of the discussion makes no assumption
on the ideals we consider, we also highlight the case of ideals that
are primary at the origin, that is, $\langle x,y\rangle$-primary.

In all that follows, we use the lexicographic monomial order $\succ$
on $\K[x,y]$ induced by $y\succ x$.

%%%%%%%%%%%%%%%%%%%%%%%%%%%%%%%%%%%%%%%%%%%%%%%%%%%%%%%%%%%%

\subsection{The structure theorem}\label{ssec:structTh}

Consider a zero dimensional ideal $I \subseteq \K[x,y]$, and let ${
  \mathcal G}=(g_0, \dots, g_s)$ be its reduced minimal Gr\"obner basis, listed
in decreasing order. Let further
$$\bm E = (y^{n_0}, x^{m_1} y^{n_1}, \dots, x^{m_{s-1}} y^{n_{s-1}} ,
x^{m_s})$$ be the minimal reduced basis of the initial ideal $in(I)$ of $I$,
listed in decreasing order, so the $n_i$'s are decreasing and the
$m_i$'s are increasing; as before, we set $m_0=n_s=0$.

It follows that $g_i$ has initial term $x^{m_{i}} y^{n_{i}}$ for all
$i$; in particular $g_0$ is monic in $y$ with initial term $y^{n_0}$.

As in Section~\ref{ssec:init}, for $i=1,\dots,s$, we set $d_i = m_i -
m_{i-1}$, with thus $m_i = d_1 + \cdots + d_i$, and $e_i =
n_{i-1}-n_i$.

Lazard proved in~\cite[Theorem~1]{Lazard85} the existence of
polynomials $D_1,\dots,D_s$ in $\K[x]$, all monic in $x$ and of
respective degrees $d_1,\dots,d_s$, such that for $i=0,\dots,s$, $g_i$
can be written as $M_i G_i$, with $M_i = D_{1}\cdots D_i \in \K[x]$
and $G_i\in \K[x,y]$ monic of degree $n_i$ in $y$ (for $i=0$, we set $D_0=1$). In
particular, for $i=s$, this gives $g_s = M_s = D_1 \cdots D_s$ and
$G_s=1$ . In addition, for $i=0,\dots,s-1$, we have the membership
relation
\begin{align}\label{eq:membership}
  G_i \in 
\left \langle G_{i+1},\  D_{i+2} G_{i+2},\ \dots,\ D_{i+2} \cdots  D_s
\right \rangle =\left
\langle \frac{g_{i+1}}{{M_{i+1}}},\frac{g_{i+2}}{{M_{i+1}}},\dots,
\frac{g_s}{{M_{i+1}}} \right\rangle,
\end{align}
where the polynomials $ G_{i+1}, D_{i+2} G_{i+2},\dots,D_{i+2} \cdots  D_s$
also form a zero-dimensional Gr\"obner basis. 

If ${\mathcal G}$ generates an $\langle x,y\rangle$-primary ideal, we
have $D_i=x^{m_i}$ for all $i$, with thus $g_s = x^{m_s}$. Besides,
for all $i$, $G_i(0,y)$ vanishes only at $y=0$, {\it i.e.}
$G_i(0,y)=y^{n_i}$, see~\cite[Theorem~2]{Lazard85}.
\medskip

In terms of data structures, representing ${\mathcal G} =(g_0,\dots,g_s)$
involves $O(s \delta)$ field elements, with $\delta$ the degree of
$I$. As a remark, we note that it would be sufficient to store the
polynomials $\bm D=(D_1,\dots,D_s)$ and $\bm G = (G_0,\dots,G_s)$
instead. If $\iniT \subset \N^2$ is the initial segment determined by
$\bm E$, the structure theorem implies that for $i=0,\dots,s$, $G_i -
y^{n_i}$ is supported on the $i$th translate $\iniT_{\leftarrow i}$ of
$\iniT$. In particular, $\delta_i$ field elements are needed to store
it, with $\delta_i = |\iniT_{\leftarrow i}|$, hence a slightly
improved total of $O(\sum_{i=0}^{s} \delta_i)$ field elements for $\bm
D$ and $\bm G$.

%%%%%%%%%%%%%%%%%%%%%%%%%%%%%%%%%%%%%%%%%%%%%%%%%%%%%%%%%%%%

\subsection{Conca and Valla's parametrization}\label{ssec:CV}

In this subsection, we suppose that the tuple $\bm E= (y^{n_0},
x^{m_1} y^{n_1}, \dots, x^{m_{s-1}} y^{n_{s-1}} , x^{m_s})$ is
fixed. Following~\cite{CoVa08}, we are interested in
describing the set of ideals $I$ in $\K[x,y]$ that have initial ideal
generated by $\bm E$. We call this set the {\em Gr\"obner cell} of
$\bm E$, and we write it $\mathcal{C}(\bm E) := \{I\mid in(I) =
\langle\bm E\rangle\}$.  We will also mention a subset of it, the set
of ideals $I$ in $\K[x,y]$ with initial ideal generated by $\bm E$ and
that are $\langle x,y\rangle$-primary; this is called the {\em
  punctual} Gr\"obner cell of $\bm E$, and is written
$\mathcal{C}_0(\bm E)$.

The idea of describing ideals with a prescribed initial ideal goes
back to~\cite{BrGa73,Briancon77,Iarrobino77} for ideals in
$\K[\![x,y]\!]$ and~\cite{CarraFerro88} for $\K[x_1,\dots,x_n]$; it
was then developed in~\cite{NoSp00,Robbiano09,LeRo11} and several
further references. It is known that these Gr\"obner cells, also
called strata, have corresponding moduli spaces that are affine
spaces, but to our knowledge, no general explicit description has yet
been given. In our case however, Conca and Valla obtained
in~\cite{CoVa08} a complete description of Gr\"obner cells and
punctual Gr\"obner cells for bivariate ideals under the lexicographic
order (following previous work of~\cite{ElSt87}, where the dimensions
of these cells were already made explicit).

\begin{example}
For an example of a punctual Gr\"obner cell, taking $\bm E = (y^4,
xy^3, x^2y, x^4)$ as in Figure~\ref{fig:example}, using the facts that
$g_i = x^{m_i} G_i$ and that $G_i(0,y)=y^{n_i}$, we deduce that the
lexicographic Gr\"obner basis of an ideal in $\mathcal{C}_0(\bm E)$
necessarily has the following shape, for some coefficients $c_1,\dots,
c_8$ in $\K$:
\begin{align*}
g_1&= y^4 + c_1xy^2 + c_2xy + c_3 x^3 + c_4 x^2 + c_5 x\\
g_2&= xy^3 + c_6 x^3 + c_7 x^2 \\
g_3&= x^2y + c_8 x^3\\
g_4&= x^4
\end{align*}
So far, though, we have not taken into account the membership equality
in~\eqref{eq:membership}, which imposes relations on the
coefficients~$c_i$. The parametrizations of $\mathcal{C}(\bm
E)$ and $\mathcal{C}_0(\bm E)$ given below resolve this issue.
\end{example}

Recall that we write $d_i = m_i - m_{i-1}$ and $e_i = n_{i-1}-n_{i}$,
for $i=1,\dots,s$.  Given $I$ in $\mathcal{C}(\bm E)$, Conca and Valla
prove the existence and uniqueness of polynomials $(\sigma_{j,i})_{0\le i
  \le s-1, i \le j \le s}$ in $\K[x,y]$ with the following degree 
constraints:
\begin{itemize}\label{deg-ct}
 \item for all $i=0,\dots,s-1$ and $j=i,\dots,s$, $\deg(\sigma_{j,i}, x) < d_{i+1}$
\item for all $i=0,\dots,s-1$, $\sigma_{i,i}$ is in $\K[x]$ and 
  $\deg( \sigma_{j,i}, y) < e_{j}$ holds for $j=i+1,\dots,s$,
\end{itemize}
and such that the following properties hold. Define polynomials $\mathcal
H=(h_0,\dots,h_s)$ in $\K[x,y]$ by
\begin{itemize}
\item $h_s = (x^{d_1}-\sigma_{0,0})\cdots (x^{d_s}-\sigma_{s-1,s-1})$
\item for $i=0,\dots,s-1$,
  \begin{equation}\label{eq:syzygy}
  x^{d_{i+1}} h_i - y^{e_{i+1}} h_{i+1} = \sigma_{i,i} h_i + \sigma_{i+1,i}h_{i+1} + \cdots + \sigma_{s,i} h_s;
\end{equation}
\end{itemize}
then, all polynomials $h_i$'s are in $I$. Since the relations above
imply that for $i=0,\dots,s$, $h_i$ has initial term $x^{m_{i}}
y^{n_{i}} $, $\mathcal{H} = (h_0,\dots,h_s)$ is a minimal Gr\"obner basis of
$I$. (Note that \cref{eq:syzygy} then gives the normal form of
the syzygy between $h_i$ and $h_{i+1}$.)

Conversely, for any choice of the polynomials $\sigma_{j,i}$ satisfying the
degree constraints above, the resulting polynomials $\mathcal{H}$ form a
minimal Gr\"obner basis of an ideal $I$ in $\mathcal{C}(\bm E)$.

\medskip

Let us briefly mention some properties of the polynomials
$h_0,\dots,h_s$. First, we claim that they have $x$-degree either
exactly $m_s$ (for $h_s$), or less than $m_s$, for
$h_0,\dots,h_{s-1}$. This is true for $h_s$ by construction. For the
other indices, this follows from a decreasing induction, by rewriting
\eqref{eq:syzygy} as
\begin{equation}\label{eq:syzygy2}
(x^{d_{i+1}}-\sigma_{i,i}) h_i = y^{e_{i+1}} h_{i+1} + \sigma_{i+1,i}h_{i+1} +
\cdots + \sigma_{s,i} h_s,  
\end{equation}
where all terms $\sigma_{j,i}h_j$ on the right have
$x$-degree less than $d_{i+1} + m_s$.

Next, note that for $i=0,\dots,s$, $(x^{d_1}-\sigma_{0,0})\cdots
(x^{d_i}-\sigma_{i-1,i-1})$ divides $h_i$, and thus all polynomials
$h_i,\dots,h_s$; this follows from~\eqref{eq:syzygy2} by a decreasing
induction (for $i=0$, the empty product is set to $1$). Since $h_i$
has initial term $x^{m_i} y^{n_i}=x^{d_1+\cdots+d_i}y^{n_i}$, we
deduce that $(x^{d_1}-\sigma_{0,0})\cdots (x^{d_i}-\sigma_{i-1,i-1})$ is
precisely the polynomial coefficient of $y^{n_i}$ in $h_i$.

Let then
${\mathcal G} = (g_0,\dots,g_s)$ be the reduced Gr\"obner basis obtained by
inter-reducing $\mathcal{H}$. Since none of the terms in
$(x^{d_1}-\sigma_{0,0})\cdots (x^{d_i}-\sigma_{i-1,i-1})y^{n_i}$ can be reduced
by $h_0,\dots,h_{i-1}$ or $h_{i+1},\dots,h_s$, we see that
$(x^{d_1}-\sigma_{0,0})\cdots (x^{d_i}-\sigma_{i-1,i-1})$ is also the polynomial
coefficient of $y^{n_i}$ in $g_i$. Hence, the polynomials $D_i$ and
$M_i$ that appear in Lazard's structure theorem are respectively given
by $D_i = x^{d_i} - \sigma_{i-1,i-1}$ and $M_i = (x^{d_1}-\sigma_{0,0})\cdots
(x^{d_i}-\sigma_{i-1,i-1})$.

Altogether, the total number $N$ of coefficients that appear in
the polynomials
$(\sigma_{j,i})_{0\le i \le s-1, i \le j \le s}$, for the Gr\"obner cell
$\mathcal{C}(\bm E)$, is given by
\begin{align*}
N&=\sum_{i=0}^{s-1} \left (\sum_{j=i+1}^s d_{i+1} e_j + d_{i+1} \right ) \\
 &=\sum_{i=0}^{s-1} d_{i+1} n_i + \sum_{i=0}^{s-1} d_{i+1} \\
 &=\delta + m_s,
\end{align*}
with $\delta$ the degree of $\bm E$. These coefficients will be written
$\lambda_1,\dots,\lambda_N$ and called {\em Gr\"obner parameters}; this
gives us a bijection $\Phi_{\bm E}$ between $\K^N$ and $\mathcal{C}(\bm E)$.

The elements in the {\em punctual} Gr\"obner cell $\mathcal{C}_0(\bm
E)$ are obtained by setting some of the Gr\"obner parameters to zero,
corresponding to the following extra conditions:
\begin{itemize}
\item the polynomials $\sigma_{0,0},\dots,\sigma_{s-1,s-1}$ vanish (recall that
  for the punctual Gr\"obner cell, we have $D_i=x^{d_i}$ and
  $M_i=x^{m_i}$ for all $i$)
\item $\sigma_{i+1,i}$ is divisible by $x$, for $i=0,\dots,s-1$.
\end{itemize}
The number of remaining coefficients in $\sigma_{1,0},\dots,\sigma_{s,s-1}$
is 
\begin{align*}
N_0&=\sum_{i=0}^{s-1} \left (\sum_{j=i+1}^s d_{i+1} e_j - e_{i+1} \right ) \\
 &=\sum_{i=0}^{s-1} d_{i+1} n_i - \sum_{i=0}^{s-1} e_{i+1} \\
 &=\delta - n_0,
\end{align*}
establishing a bijection between $\K^{N_0}$ and $\mathcal{C}_0(\bm
E)$. Recall that in the primary case, the degree $\delta$ of $\bm E$
is the multiplicity of the ideals in $\mathcal{C}_0(\bm E)$ at the
origin.

\begin{example}
Let us describe the punctual Gr\"obner cell of $\bm E$ in our
running example (\autoref{ex:runningex}). It has dimension $N_0=9-4=5$, so that we can use parameters
$\lambda_1,\dots,\lambda_5$, with polynomials $(\sigma_{i,j})$ of the
form
$$\sigma_{0,0}=\sigma_{1,0}=0,\quad \sigma_{2,0}=\lambda_1 y +\lambda_2,\quad \sigma_{3,0}=\lambda_3,\quad \sigma_{1,1}=n_{2,1} = 0,\quad \sigma_{3,1} = \lambda_4,\quad 
\sigma_{2,2}=0, \quad \sigma_{3,2} = \lambda_5 x.$$
Then, the ideals in
$\mathcal{C}_0(\bm E)$ are exactly those ideals with Gr\"obner bases as
follows:
\begin{align*}
h_0&=    y^4 + \lambda_5xy^3 + \lambda_1xy^2 + (\lambda_1\lambda_5 + \lambda_4)x^2y + \lambda_2xy + \lambda_3x^3 +  \lambda_2\lambda_5x^2\\
h_1&=    xy^3 + \lambda_5x^2y^2 + \lambda_4x^3\\
h_2&=    x^2y + \lambda_5x^3\\
h_3&=   x^4.
\end{align*}
As expected, these are not reduced Gr\"obner bases. After reduction,
we obtain the following polynomials ${\mathcal G}$:
\begin{equation}
\begin{aligned}\label{eq:GL}
g_0&= y^4 + \lambda_1xy^2 + \lambda_2xy + (-\lambda_1\lambda_5^2 + \lambda_3 - 2\lambda_4\lambda_5)x^3 + \lambda_2\lambda_5x^2\\
g_1&= xy^3 + \lambda_4x^3\\
g_2&= x^2y + \lambda_5x^3\\
g_3&= x^4.
\end{aligned}
\end{equation}
\end{example}

%%%%%%%%%%%%%%%%%%%%%%%%%%%%%%%%%%%%%%%%%%%%%%%%%%%%%%%%%%%%
%%%%%%%%%%%%%%%%%%%%%%%%%%%%%%%%%%%%%%%%%%%%%%%%%%%%%%%%%%%%
%%%%%%%%%%%%%%%%%%%%%%%%%%%%%%%%%%%%%%%%%%%%%%%%%%%%%%%%%%%%

\section{Reduction modulo a lexicographic Gr\"obner basis}\label{sec:reduction}

As before, suppose that ${\mathcal G}=(g_0,\dots,g_s)$ is a lexicographic
Gr\"obner basis in $\K[x,y]$, with initial segment $\iniT \subset
\N^2$.  Given $f$ in $\K[x,y]$, we are interested in computing the
remainder $r = f \brem {\mathcal G} \in \K[x,y]_\iniT$; this will be used on
multiple occasions in this paper, and is also an interesting question
in itself. Remarkably, we are not aware of previous work on the
complexity of this particular question.

We start by developing the necessary background as a problem in plane
geometry. This is inspired by work of~\cite{HoLa18}, which was
specific to certain weighted orderings (we discuss this further
below). We continue with algorithms to convert polynomials into a
so-called {\em mixed-radix} representation, and back; the reduction
algorithm itself is then given in the last subsection.

%%%%%%%%%%%%%%%%%%%%%%%%%%%%%%%%%%%%%%%%%%%%%%%%%%%%%%%%%%%%

\subsection{A paving problem} \label{sec:paving}

For ${\mathcal G}$ as above and $f$ in $\K[x,y]$, the remainder $r=f \brem
{\mathcal G}$ is uniquely defined, but the quotients $Q_i$ in the relation $f = Q_0
g_0 + \cdots + Q_s g_s + r$ are not. The reduction algorithm will
obtain $r$ by computing the $Q_i$'s one after the other. Hence, to
completely specify the algorithm, we need to make these quotients
unambiguous: whenever a monomial $x^uy^v$ can be reduced by more than
one of the Gr\"obner basis elements, we must prescribe which of the
$g_i$'s is used. The cost of the resulting algorithm will depend in an
essential manner on these decisions.

\cite{HoLa18} introduced a dichotomic scheme, in the context of
reduction modulo certain ``nice'' Gr\"obner bases, for weighted degree
orderings. In this subsection, we adapt their construction to our
situation.

As before, suppose that the initial terms of ${\mathcal G}$ are the monomials
$$\bm E = (y^{n_0}, x^{m_1} y^{n_1}, \dots, x^{m_{s-1}}
y^{n_{s-1}},x^{m_s});$$ we still write $d_i = m_i - m_{i-1}$ and $e_i
= n_{i-1}-n_{i}$, for $i=1,\dots,s$. The set of monomials to which we
will apply the main reduction algorithm is $\{x^u y^v,\ 0\le u <
m_s, \ 0 \le v < n_0\}$, so it has cardinal $n_0 m_s$ (the general
case will be reduced to this situation). In particular, neither $g_0$
nor $g_s$ can reduce any of these monomials.

We can then translate our question into a paving problem in the plane.
We want  to cover $\iniS=\{0,\dots,m_s-1\}\times \{0,\dots,n_0-1\}
- \iniT$ by rectangles, under the following constraints:
\begin{itemize}
\item we use $s-1$ pairwise disjoint rectangles,
  $\iniR_1,\dots,\iniR_{s-1}$, so that $\iniR_i$ will index the set of
  monomials that are reduced using $g_i$
\item for all $i$, $\iniR_i$ has the form $\{m_i,\dots,m_{i +
  \ell_i}-1\} \times \{n_i,\dots,n_{i - h_i}-1\}$, for some positive
  integers $\ell_i,h_i$ such that $i + \ell_i \le s$ and $i -h_i \ge
  0$
\item the union of all $\iniR_i$'s covers $\iniS$.
\end{itemize}
The sequence
$((\ell_1,h_1),\dots,(\ell_{s-1},h_{s-1}))$ is sufficient to specify
such a paving.  Our goal is then to minimize the quantity
$$c :=n_0\sum_{i=1}^{s-1} (m_{i+\ell_i}-m_i) + m_s \sum_{i=1}^{s-1}
(n_{i-h_i}-n_i),$$ where $(m_{i+\ell_i}-m_i)$ and $(n_{i-h_i}-n_i)$
are respectively the width and height of $\iniR_i$. This quantity will
turn out to determine the cost of the reduction algorithm; the target
is to keep $c$ in $\softO(n_0 m_s)$, since we mentioned that $n_0 m_s$
in an upper bound on the number of monomials in the polynomials we
want to reduce.

The following figure shows two possible pavings, for the case $d=4$ of
the family already seen in the proof of Lemma~\ref{bound_s}, with $\bm
E = (y^d,xy^{d-1},\dots,x^d)$. For this family, $n_0=m_s=d$ and
$n_0m_s = d^2$; the strategies showed on the example below have either
$\sum_{i=1}^{s-1} (m_{i+\ell_i}-m_i)$ or $\sum_{i=1}^{s-1}
(n_{i-h_i}-n_i)$ in $\Theta(d^2)$, so $c$ is in
$\Theta(d^3)=\Theta((n_0m_s)^{1.5})$ in either case.

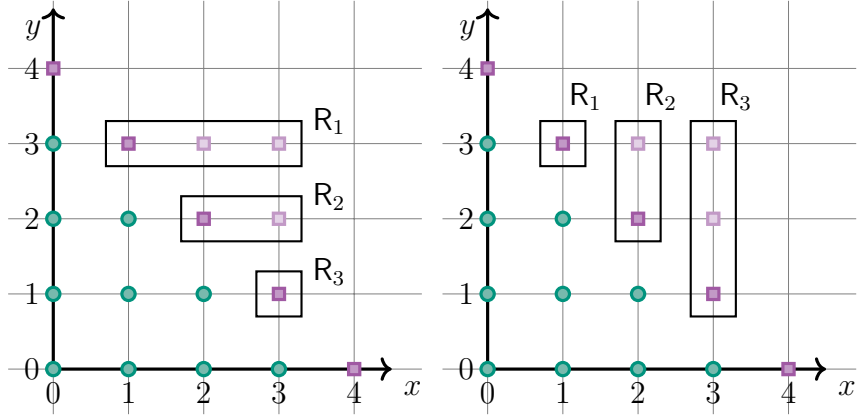
\begin{figure}[H]
\centering
  \begin{tikzpicture}
    [  
      roundnode/.style={circle, draw=PineGreen!90, fill=PineGreen!50, very thick, minimum size=5pt, inner sep=0pt, outer sep=0pt},
      square/.style={regular polygon,regular polygon sides=4, draw=Purple!90, fill=Purple!50, very thick, minimum size=6pt, inner sep=0pt, outer sep=0pt},
      lightsquare/.style={regular polygon,regular polygon sides=4, draw=Purple!50, fill=Purple!20, very thick, minimum size=6pt, inner sep=0pt, outer sep=0pt},
    ]
    \draw[step=1cm,gray,very thin] (-0.6,-0.6) grid (4.9,4.9);
    \draw[very thick,->] (-0.1,0) -- (4.5,0) node[anchor=north west] {$x$};
    \draw[very thick,->] (0,-0.1) -- (0,4.8) node[anchor=north east] {$y$};
    \foreach \x in {0,1,2,3,4} \draw (\x cm,1pt) -- (\x cm,-1pt) node[anchor=north] {$\x$};
    \foreach \y in {0,1,2,3,4} \draw (1pt,\y cm) -- (-1pt,\y cm) node[anchor=east] {$\y$};
    \node[roundnode] at (0,0) {};
    \node[roundnode] at (0,1) {};
    \node[roundnode] at (0,2) {};
    \node[roundnode] at (0,3) {};
    \node[square] at (0,4) {};
    \node[roundnode] at (1,0) {};
    \node[roundnode] at (1,1) {};
    \node[roundnode] at (1,2) {};
    \node[square] at (1,3) {};
    \node[roundnode] at (2,0) {};
    \node[roundnode] at (2,1) {};
    \node[square] at (2,2) {};
    \node[lightsquare] at (2,3) {};
    \node[roundnode] at (3,0) {};
    \node[square] at (3,1) {};
    \node[lightsquare] at (3,2) {};
    \node[lightsquare] at (3,3) {};
    \node[square] at (4,0) {};

    \draw[thick] (0.7,2.7) rectangle (3.3,3.3) node[right] {$\iniR_1$};
    \draw[thick] (1.7,1.7) rectangle (3.3,2.3) node[right] {$\iniR_2$};
    \draw[thick] (2.7,0.7) rectangle (3.3,1.3) node[right] {$\iniR_3$};
  \end{tikzpicture}
  \begin{tikzpicture}
    [  
      roundnode/.style={circle, draw=PineGreen!90, fill=PineGreen!50, very thick, minimum size=5pt, inner sep=0pt, outer sep=0pt},
      square/.style={regular polygon,regular polygon sides=4, draw=Purple!90, fill=Purple!50, very thick, minimum size=6pt, inner sep=0pt, outer sep=0pt},
      lightsquare/.style={regular polygon,regular polygon sides=4, draw=Purple!50, fill=Purple!20, very thick, minimum size=6pt, inner sep=0pt, outer sep=0pt},
    ]
    \draw[step=1cm,gray,very thin] (-0.6,-0.6) grid (4.9,4.9);
    \draw[very thick,->] (-0.1,0) -- (4.5,0) node[anchor=north west] {$x$};
    \draw[very thick,->] (0,-0.1) -- (0,4.8) node[anchor=north east] {$y$};
    \foreach \x in {0,1,2,3,4} \draw (\x cm,1pt) -- (\x cm,-1pt) node[anchor=north] {$\x$};
    \foreach \y in {0,1,2,3,4} \draw (1pt,\y cm) -- (-1pt,\y cm) node[anchor=east] {$\y$};
    \node[roundnode] at (0,0) {};
    \node[roundnode] at (0,1) {};
    \node[roundnode] at (0,2) {};
    \node[roundnode] at (0,3) {};
    \node[square] at (0,4) {};
    \node[roundnode] at (1,0) {};
    \node[roundnode] at (1,1) {};
    \node[roundnode] at (1,2) {};
    \node[square] at (1,3) {};
    \node[roundnode] at (2,0) {};
    \node[roundnode] at (2,1) {};
    \node[square] at (2,2) {};
    \node[lightsquare] at (2,3) {};
    \node[roundnode] at (3,0) {};
    \node[square] at (3,1) {};
    \node[lightsquare] at (3,2) {};
    \node[lightsquare] at (3,3) {};
    \node[square] at (4,0) {};
    \draw[thick] (0.7,2.7) rectangle (1.3,3.3) node[above] {$\iniR_1$};
    \draw[thick] (1.7,1.7) rectangle (2.3,3.3) node[above] {$\iniR_2$};
    \draw[thick] (2.7,0.7) rectangle (3.3,3.3) node[above] {$\iniR_3$};
  \end{tikzpicture}
 \caption{two possible pavings with $d=4$.}
  \label{fig:paving}
\end{figure}
For this family, a better solution is given below.
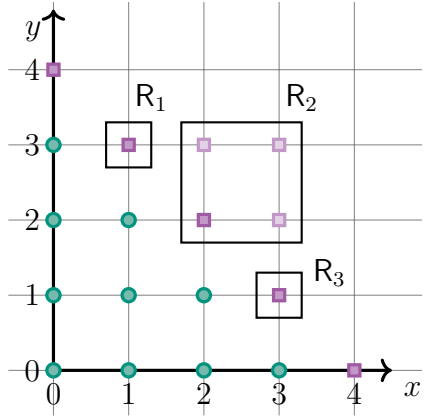
\begin{figure}[H]
\centering
  \begin{tikzpicture}
    [  
      roundnode/.style={circle, draw=PineGreen!90, fill=PineGreen!50, very thick, minimum size=5pt, inner sep=0pt, outer sep=0pt},
      square/.style={regular polygon,regular polygon sides=4, draw=Purple!90, fill=Purple!50, very thick, minimum size=6pt, inner sep=0pt, outer sep=0pt},
      lightsquare/.style={regular polygon,regular polygon sides=4, draw=Purple!50, fill=Purple!20, very thick, minimum size=6pt, inner sep=0pt, outer sep=0pt},
    ]
    \draw[step=1cm,gray,very thin] (-0.6,-0.6) grid (4.9,4.9);
    \draw[very thick,->] (-0.1,0) -- (4.5,0) node[anchor=north west] {$x$};
    \draw[very thick,->] (0,-0.1) -- (0,4.8) node[anchor=north east] {$y$};
    \foreach \x in {0,1,2,3,4} \draw (\x cm,1pt) -- (\x cm,-1pt) node[anchor=north] {$\x$};
    \foreach \y in {0,1,2,3,4} \draw (1pt,\y cm) -- (-1pt,\y cm) node[anchor=east] {$\y$};
    \node[roundnode] at (0,0) {};
    \node[roundnode] at (0,1) {};
    \node[roundnode] at (0,2) {};
    \node[roundnode] at (0,3) {};
    \node[square] at (0,4) {};
    \node[roundnode] at (1,0) {};
    \node[roundnode] at (1,1) {};
    \node[roundnode] at (1,2) {};
    \node[square] at (1,3) {};
    \node[roundnode] at (2,0) {};
    \node[roundnode] at (2,1) {};
    \node[square] at (2,2) {};
    \node[lightsquare] at (2,3) {};
    \node[roundnode] at (3,0) {};
    \node[square] at (3,1) {};
    \node[lightsquare] at (3,2) {};
    \node[lightsquare] at (3,3) {};
    \node[square] at (4,0) {};
    \draw[thick] (0.7,2.7) rectangle (1.3,3.3) node[above] {$\iniR_1$};
    \draw[thick] (1.7,1.7) rectangle (3.3,3.3) node[above] {$\iniR_2$};
    \draw[thick] (2.7,0.7) rectangle (3.3,1.3) node[right] {$\iniR_3$};
  \end{tikzpicture}
 \caption{an improved paving.}
  \label{fig:sol_paving}
\end{figure}
This design was introduced 
in~\cite{HoLa18}, for families $\bm E$ similar to the one in the
example, where the step widths $d_i$ are (almost) constant, and all
step heights $e_i$ are equal to $1$. The construction we give below
for arbitrary inputs is derived from it in a direct manner. In what
follows, ${\rm val}_2(i)$ denotes the $2$-adic valuation of a positive
integer $i$.

\begin{definition}\label{def:elli}
For $i=1,\dots,s-1$, define:
\begin{itemize}
\item $h_i = 2^{{\rm val}_2(i)}$
\item $\ell_i = \min(h_i , s-i)$
\end{itemize}
\end{definition}
As a result, the rectangle $\iniR_i$ is $\{m_i,\dots,m_{\min(i+h_i ,s)}-1\} 
\times \{n_i,\dots,n_{i-h_i }-1\}$.

\begin{proposition}
  For any $s$ and any choices of $m_1,\dots,m_s$ and
  $n_0,\dots,n_{s-1}$, the rectangles $\iniR_1,\dots,\iniR_{s-1}$ are
  pairwise disjoint, cover $\iniS=\{0,\dots,m_s-1\}\times
  \{0,\dots,n_0-1\} - \iniT$, and satisfy $i + \ell_i \le s$ and
  $i -h_i\ge 0$ for all $i$.
\end{proposition}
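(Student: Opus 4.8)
The plan is to verify the three required properties --- the index constraints, pairwise disjointness, and the covering property --- by purely combinatorial arguments about the $2$-adic valuation function, translating everything into statements about the intervals $\{m_i,\dots,m_{\min(i+h_i,s)}-1\}$ on the $x$-axis and $\{n_i,\dots,n_{i-h_i}-1\}$ on the $y$-axis. Since the $m_i$ are strictly increasing and the $n_i$ strictly decreasing, questions about rectangles overlapping reduce to questions about the corresponding index intervals $[i, i+\ell_i)$ (for the $x$-direction, where larger index means larger $m$) and $(i-h_i, i]$ (for the $y$-direction, where larger index means smaller $n$) overlapping. So the real content is a lemma about the intervals $I_i = [i-h_i+1, \dots, i+\ell_i]$ with $h_i = 2^{\mathrm{val}_2(i)}$.

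First I would dispose of the index constraints. We have $\ell_i = \min(h_i, s-i) \le s-i$, so $i+\ell_i \le s$ is immediate. For $i - h_i \ge 0$: writing $i = 2^{v}q$ with $q$ odd and $v = \mathrm{val}_2(i)$, we have $h_i = 2^v$ and $i - h_i = 2^v(q-1) \ge 0$ since $q \ge 1$. That part is routine.

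Next, disjointness. The key observation is the standard fact that for $i < j$, if $\mathrm{val}_2(i) = \mathrm{val}_2(j) = v$, then $j - i$ is a multiple of $2^{v+1}$, hence $j \ge i + 2^{v+1} = i + 2h_i$; and more generally, among any two distinct indices $i \ne j$ in $\{1,\dots,s-1\}$, the one with the strictly smaller $2$-adic valuation, say $i$ with $\mathrm{val}_2(i) = v_i < v_j$, has its ``half-open'' interval of influence contained strictly between consecutive multiples of $2^{v_i+1}$... I need to be careful here. The clean claim to prove is: for $i \ne j$, the index sets $\{i, i+1, \dots, i+\ell_i\}$ and $\{j,\dots,j+\ell_j\}$ intersect only possibly at their endpoints in a controlled way --- actually what we need is that $\iniR_i \cap \iniR_j = \emptyset$, which (given monotonicity of $m$ and $n$) holds iff the $x$-intervals are disjoint \emph{or} the $y$-intervals are disjoint. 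I would argue: WLOG $i < j$ and $h_i \le h_j$ is impossible to assume both, so split on cases. If $\mathrm{val}_2(i) < \mathrm{val}_2(j)$, I claim the $x$-intervals are disjoint, i.e. $i + \ell_i \le j$: indeed $i+h_i = i + 2^{\mathrm{val}_2(i)}$, and since $j$ is a multiple of $2^{\mathrm{val}_2(i)+1} > i + 2^{\mathrm{val}_2(i)}$ would need... hmm, actually I should argue $j \ge i + h_i$ because the next multiple of $2^{\mathrm{val}_2(j)} \ge 2^{\mathrm{val}_2(i)+1}$ after $i$ is at distance $\le 2^{\mathrm{val}_2(i)} \cdot(\text{something})$... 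Let me just say: write $i = 2^a u$, $u$ odd; the multiples of $2^{a+1}$ bracket $i$ as $2^{a+1}\lfloor u/2\rfloor < i < 2^{a+1}\lceil u/2 \rceil = 2^{a+1}\lfloor u/2\rfloor + 2^{a+1}$, and any $j$ with $\mathrm{val}_2(j) > a$ is a multiple of $2^{a+1}$, so either $j \le i - 2^a$ (giving $i - h_i \ge j$, $y$-intervals disjoint since... wait need $j + \ell_j \le i$) or $j \ge i + 2^a = i + h_i$ (giving $x$-intervals disjoint since $i + \ell_i \le i + h_i \le j$). And if $\mathrm{val}_2(i) = \mathrm{val}_2(j) = a$ with $i<j$, then $j \ge i + 2^{a+1} = i + 2h_i \ge i + \ell_i + h_i > i + \ell_i$, wait I only get $i + \ell_i \le i + h_i \le i + 2h_i \le j$ when... $\ell_i \le h_i$ so $i + \ell_i \le i + h_i < i + 2h_i \le j$, good, $x$-intervals disjoint. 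So disjointness follows by symmetry.

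Finally, covering. I want every $(u,v) \in \iniS$ to lie in some $\iniR_i$. Given such $(u,v)$, since $(u,v) \notin \iniT$, there is a unique $i$ with $m_i \le u < m_{i+1}$ forced by the staircase... no --- rather, let $i$ be the largest index with $m_i \le u$ (so $1 \le i \le s$, and $i \ge 1$ since $u \ge m_1$ because $(u,v)\notin\iniT$ and $v \ge n_1$... I need to set this up carefully using that $(u,v)\notin\iniT$ means $u \ge m_j$ for some $j$ with $v < n_{j-1}$, equivalently $v \ge n_j$ for some $j$ with $u \ge m_j$). The clean way: let $i = \max\{j : m_j \le u\}$ and separately we know $v \ge n_i$ is forced because $(u,v)\notin\iniT$ and $m_i \le u < m_{i+1}$. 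Then I need to find $i'$ such that $i' \le i < \min(i'+h_{i'}, s)$ in the $x$-sense and $i' - h_{i'} \le$ (index controlling $v$) $\le i'$; i.e. the rectangle $\iniR_{i'}$ whose $x$-range includes column $u$ and whose $y$-range includes row $v$. This amounts to: given the pair of indices $(i, k)$ where $m_i \le u < m_{i+1}$ and $n_{k} \le v < n_{k-1}$, with necessarily $k \le i$ (since $(u,v) \notin \iniT$ forces $v \ge n_i$, i.e. $k \le i$), find $i'$ with $i' \le i < i' + h_{i'}$ (really $\min(i'+h_{i'},s)$, but if $i' + h_{i'} > s$ then since $i \le s$... need $i < s$; note $i = s$ would mean $u \ge m_s$, excluded, so $i \le s-1$) and $i' - h_{i'} < k \le i'$. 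In other words: find $i' \in [k, i]$ with $h_{i'} > \max(i - i', i' - k)$, equivalently $h_{i'} > i - i'$ and $h_{i'} \ge i' - k + 1$. The natural candidate is to take $i'$ to be the unique index in $[k,i]$ maximizing $\mathrm{val}_2(i')$ --- I claim there is exactly one such index, and it works. This uses the classical fact that any interval of integers $[k,i]$ of length $i-k+1 \le 2^v$ contains at most one multiple of $2^v$, hence the index of maximal $2$-adic valuation in $[k,i]$ is unique, call it $i'$ with $\mathrm{val}_2(i') = a$, so $h_{i'} = 2^a$; and because $[k,i]$ contains no multiple of $2^{a+1}$ yet contains the multiple $i'$ of $2^a$, both $i' - k < 2^{a+1}$ and $i - i' < 2^{a+1}$, but I need the sharper $i - i' < 2^a$ and $i' - k < 2^a$... that's not quite right either. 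Hmm. Let me instead reason: let $2^a$ be the largest power of two not exceeding $i - k + 1 + \max(\dots)$... This is getting delicate, and this covering argument is the step I expect to be the main obstacle. The resolution I'd pursue: take $a$ maximal such that the interval $(k - 2^a, i]$ --- or some such carefully-chosen window --- contains a multiple of $2^a$; alternatively mimic the binary-search structure directly, observing that the rectangles $\iniR_i$ for $i$ ranging over multiples of $2^a$ (times odd numbers) tile things level by level exactly as in the running example's figure. I would write out the inductive/dichotomic version: prove by induction on $\lceil \log_2 s\rceil$ that the rectangles cover, peeling off the level $i = s/2$ (when $s$ is a power of two) whose rectangle $\iniR_{s/2}$ has $x$-range $[m_{s/2}, m_s)$ and $y$-range $[n_{s/2}, n_0)$, and recursing on the two sub-staircases below and to the left. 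Reconciling this clean recursive picture with the closed-form $h_i = 2^{\mathrm{val}_2(i)}$ is exactly the technical heart, and is where I'd spend the most care.
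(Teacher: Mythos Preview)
Your approach is genuinely different from the paper's, and mostly correct, but you abandon your covering argument exactly when it was about to work.

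\textbf{Disjointness.} Your direct case analysis on $2$-adic valuations is sound, though the write-up is muddled: having fixed $i<j$, you still need to treat the case $\mathrm{val}_2(i) > \mathrm{val}_2(j)$ separately (it is not literally ``by symmetry''). In that case, with $b=\mathrm{val}_2(j)$, the index $i$ is a multiple of $2^{b+1}$, so $i \le j - 2^b = j - h_j$; hence $n_{j-h_j}\le n_i$ and the $y$-intervals are disjoint. With that addition your argument is complete. The paper does not argue this way at all: it reduces to the ``unit-step'' staircase $\mathscr{E}_s=(x^i y^{s-i})$ via the map $(u,v)\mapsto(\alpha,s-\beta)$ and then proves both disjointness and covering simultaneously by induction on powers of two. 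Your direct argument is arguably cleaner for disjointness.

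\textbf{Covering.} Here you correctly reduce to: given $1\le k\le i\le s-1$, find $i'\in[k,i]$ with $h_{i'}>i-i'$ and $h_{i'}>i'-k$. You propose taking $i'$ of maximal $2$-adic valuation in $[k,i]$, then say ``that's not quite right either'' and retreat. But it \emph{is} right, and the verification is short. Write $i'=2^a q$ with $q$ odd and $a$ maximal over $[k,i]$. If $i-i'\ge 2^a$, then $i'+2^a=2^a(q+1)\in[k,i]$ and, since $q+1$ is even, has valuation $\ge a+1$, contradicting maximality of $a$; so $i-i'<2^a=h_{i'}$. Likewise, if $i'-k\ge 2^a$, then $i'-2^a=2^a(q-1)\ge k\ge 1$ forces $q\ge 3$ (as $q$ is odd), so $q-1$ is even and $i'-2^a\in[k,i]$ has valuation $\ge a+1$, again a contradiction; hence $i'-k<2^a=h_{i'}$. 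That finishes covering. You were one paragraph away.

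\textbf{Comparison.} The paper's reduction-then-induction is more structural: it shows the general paving is a pullback of the canonical one, then proves $P(2^k)$ by a three-block decomposition (the recursion you sketch at the end) and deduces $P(s)$ for arbitrary $s$ by restricting from the next power of two. Your route is more elementary and avoids the reduction entirely, at the cost of needing two separate combinatorial lemmas. Either is fine; but as written, your proof has a genuine gap in the covering step that you should close with the argument above rather than leaving it as a sketch.
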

\begin{proof}
  The last claim is a direct consequence of the definitions. We prove
  the rest of the proposition by reduction to the case where all $d_i$'s and
  $e_i$'s are equal to one. The proof is technical but raises no
  special difficulty.

  For any positive integer $s$, we define the monomials
  ${\mathscr{E}}_s=(x^i y^{s-i},\ 0 \le i \le s)$, 
  the initial segment $\mathscr{T}_s$ determined by $\mathscr{E}_s$ and
  $\mathscr{S}_s=\{0,\dots,s-1\}\times \{0,\dots,s-1\} -
  \mathscr{T}_s$; note that $\mathscr{T}_s$ is the set of all pairs of
  non-negative integers $(a,b)$ with $b < s-a$. Finally, for
  $i=1,\dots,s-1$ we define the rectangle
  $\mathscr{R}_{i,s}=\{i,\dots,\min(i+h_i ,s)-1\} \times
  \{s-i,\dots,s-i+h_i -1\} \subset \mathscr{S}_s$.
  
  We start from $m_1,\dots,m_s$ and $n_0,\dots,n_{s-1}$ as in the
  proposition's statement, with corresponding sets $\iniT$ and $\iniS$ in
  $\N^2$. Take a point $(u,v)$ in $\iniS$. Because $u < m_s$, there
  exists a unique pair $(\alpha,u')$ such that $u=m_\alpha + u'$, with
  $0 \le \alpha \le s-1$ and $0\le u' < d_{\alpha+1}$. Similarly,
  because $v < n_0$, there exists a unique pair $(\beta,v')$ such that
  $v=n_\beta + v'$, with $1 \le \beta \le s$ and $0 \le v' <
  e_\beta$. We claim that $(\alpha,s-\beta)$ is in the set
  $\mathscr{S}_s$ defined in the previous paragraph, and that for
  $i=1,\dots,s-1$, $(u,v)$ is in the rectangle $\iniR_i$ if and only if
  $(\alpha,s-\beta)$ is in the rectangle $\mathscr{R}_{i,s}$.
  \begin{itemize}
  \item For the first claim, we already pointed out the inequalities
    $0\le \alpha \le s-1$ and $1 \le \beta\le s$, which gives $0 \le
    s-\beta \le s-1$, so that $(\alpha,s-\beta)$ is in the square
    $\{0,\dots,s-1\}\times \{0,\dots,s-1\}$. On the other hand, we
    have $v \ge n_\alpha$ (otherwise $(\alpha,\beta)$ would be in
    $\iniT$), and so $\beta \le \alpha$ and $s-\beta \ge
    s-\alpha$. This proves that the point $(\alpha,s-\beta)$ is not in
    $\mathscr{T}_s$, so altogether, it lies in $\mathscr{S}_s$.
  \item For the second claim, note that since $u=m_\alpha+u'$, with
    $0\le u' < d_{\alpha+1}$, $m_i \le u <
    m_{\min(i+h_i ,s)}$ is equivalent to $i \le \alpha <
    \min(i+h_i ,s)$. Similarly, the inequalities $n_i \le v <
    n_{i-h_i }$ are equivalent to $s-i \le s-\beta <
    s-i+h_i $. This proves the claim.

  \end{itemize}
  To conclude, it is now sufficient to prove that for all $s$, the
  following property, written $P(s)$, holds: the rectangles
  $\mathscr{R}_{1,s},\dots,\mathscr{R}_{s-1,s}$ are pairwise disjoint
  and cover $\mathscr{S}_s$. First, we prove it for $s$ a power of
  two, of the form $s=2^k$, by induction on $k \ge 1$. For $k=1$ (so
  $s=2$), there is nothing to prove, as $\mathscr{S}_2 = \{1\} \times
  \{1 \}=\mathscr{R}_{1,2}$.

  Supposing that $P(s)$ is true for $s=2^k$, we now prove it for
  $s'=2s$. For $\mathscr{S}$ a subset of $\N^2$, we write $\mathscr{S}
  \cap \{x \le t\}$ for the set of all $(x,y)$ in $\mathscr{S}$ with
  $x \le t$. The sets $\mathscr{S} \cap \{x \ge t\}$, $\mathscr{S}
  \cap \{x \le t, y \le t'\}$, etc, are defined similarly.

  First, we note that for any power of two $\sigma=2^t$ and
  $i=1,\dots,\sigma-1$, we have $i+h_i \le \sigma$, so the
  rectangle $\mathscr{R}_{i,\sigma}$ is simply
  $\mathscr{R}_{i,\sigma}=\{i,\dots,i+h_i -1\} \times
  \{\sigma-i,\dots,\sigma-i+h_i -1\}$. As a result, the
  rectangles $\mathscr{R}_{1,s'},\dots,\mathscr{R}_{s-1,s'}$ are
  translates of $\mathscr{R}_{1,s},\dots,\mathscr{R}_{s-1,s}$ by
  $(0,s)$, so by the induction assumption, they are pairwise disjoint,
  cover $\mathscr{S}_{s'} \cap \{ x \le s-1 \}$, and do not meet
  $\mathscr{S}_{s'} \cap \{ x \ge s \}$ (on
  Figure~\ref{fig:sol_paving}, we have $s=2$, $s'=4$, and there is only
  one such rectangle, written $R_1$).  Since
  $h_i =\varepsilon_{i+s}$ for $i=1,\dots,s-1$, we also
  deduce that the rectangles
  $\mathscr{R}_{s+1,s'},\dots,\mathscr{R}_{2s-1,s'}$ are translates of
  $\mathscr{R}_{1,s},\dots,\mathscr{R}_{s-1,s}$ by $(s,0)$. Thus, they
  are pairwise disjoint, cover $\mathscr{S}_{s'} \cap \{ x \ge s, y
  \le s-1 \}$, and do not meet $\mathscr{S}_{s'} \cap \{ x \ge s, y
  \ge s\}$ (on Figure~\ref{fig:sol_paving}, this is $R_3$).  Finally,
  $\mathscr{R}_{s,s'}$ is the rectangle $\{s,2s-1\} \times \{s,2s-1\}$
  (on Figure~\ref{fig:sol_paving}, this is $R_2$). Altogether, $P(s')$
  holds and the induction is complete.

  The last step is to prove that $P(s)$ holds for all $s$, knowing
  that it holds for all powers of two. Let $s$ be arbitrary and let
  $s'$ be the first power of two greater than or equal to $s$, so that
  we know that $P(s')$ holds. Let $s''=s'/2$. Since $s' < 2s$, $s''
  \le s$. For $i < s''$, $\mathscr{R}_{i,s}=\mathscr{R}_{i,s'} -
  (s'-s,0)$, whereas for $s'' \le i \le s-1$,
  $\mathscr{R}_{i,s}=\mathscr{R}_{i,s'} \cap \{x \le s-1\}-(s'-s,0)$.
  Knowing $P(s')$, this implies that all these sets are pairwise
  disjoint. In addition, they cover $\mathscr{S}_{s'}\cap \{x\le
  s-1\}-(s'-s,0)$, which is none other that $\mathscr{S}_s$. Thus,
  $P(s)$ is proved.
\end{proof}

The key property of this construction is that the corresponding value
of $c =n_0\sum_{i=1}^{s-1} (m_{i+\ell_i}-m_i) + m_s \sum_{i=1}^{s-1}
(n_{i-h_i}-n_i)$ is softly linear in $n_0 m_s$. This is close to
optimal, since the inequalities $\sum_{i=1}^{s-1}(m_{i+\ell_i}-m_i)
\ge m_s-1$ and $\sum_{i=1}^{s-1} (n_{i-h_i}-n_i) \ge n_0 -1$ imply
that $c$ is in $\Omega(n_0m_s)$.

\begin{proposition}\label{lemma:costpaving}
  For $\iniR_1,\dots,\iniR_{s-1}$ as above, $c =n_0\sum_{i=1}^{s-1}
  (m_{i+\ell_i}-m_i) + m_s \sum_{i=1}^{s-1} (n_{i-h_i}-n_i)$ is in
  $\softO(n_0 m_s)$.
\end{proposition}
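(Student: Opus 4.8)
The plan is to decouple $c = n_0 W + m_s H$ into a ``total width'' $W := \sum_{i=1}^{s-1}(m_{i+\ell_i}-m_i)$ and a ``total height'' $H := \sum_{i=1}^{s-1}(n_{i-h_i}-n_i)$, and to prove separately that $W \in \softO(m_s)$ and $H \in \softO(n_0)$; the claim $c \in \softO(n_0 m_s)$ then follows at once.

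First I would rewrite each width telescopically. Since $m_j = d_1 + \cdots + d_j$ and $i + \ell_i = \min(i+h_i, s)$, we have $m_{i+\ell_i} - m_i = \sum_{j=i+1}^{\min(i+h_i,s)} d_j$; extending by the convention $d_j := 0$ for $j > s$ (this absorbs the truncation $\ell_i = \min(h_i, s-i)$ without case distinctions) and exchanging the order of summation,
\[
W = \sum_{j\ge 2} d_j\, N_j, \qquad N_j := \#\{\, i \in \{1,\dots,s-1\} : i < j \le i + h_i \,\}.
\]
The identical manipulation with the $n_j$, $e_j = n_{j-1}-n_j$ and $n_{i-h_i} - n_i = \sum_{j=i-h_i+1}^{i} e_j$ (legitimate because $i - h_i \ge 0$, already established) gives $H = \sum_{j\ge 1} e_j\, N'_j$ with $N'_j := \#\{\, i \in \{1,\dots,s-1\} : j \le i < j + h_i \,\}$.

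The crux, and the step I expect to require the most care, is the bound $N_j, N'_j \in O(\log s)$, which exploits the dyadic structure $h_i = 2^{{\rm val}_2(i)}$. For fixed $j$ I would group the contributing indices $i$ by $k := {\rm val}_2(i)$: such an $i$ equals $2^k m$ with $m$ odd, and the constraint $j - 2^k \le i < j$ reads $j\,2^{-k} - 1 \le m < j\,2^{-k}$, a half-open interval of length exactly $1$, hence containing a single integer $m$. So at most one admissible $i$ occurs per level $k$, and since ${\rm val}_2(i) \le \log_2(s-1)$ only $O(\log s)$ levels occur, whence $N_j \in O(\log s)$; the count for $N'_j$ is identical, the relevant interval being $[j\,2^{-k}, j\,2^{-k}+1)$.

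Finally I would assemble the pieces: with $\sum_{j} d_j \le m_s$ (the sum running over $j\le s$) and $\sum_{j} e_j = n_0 - n_{s-1} \le n_0$ (over $j \le s-1$), we get $W \in O(\log s \cdot m_s)$ and $H \in O(\log s \cdot n_0)$, so $c \in O(\log s \cdot n_0 m_s)$. Since $d_i, e_i \ge 1$ force $s \le \min(n_0, m_s)$ (alternatively, by \cref{bound_s}, $s = O(\sqrt{\delta})$ with $\delta \le n_0 m_s$), the factor $\log s$ is polylogarithmic in the problem size, and $c \in \softO(n_0 m_s)$ follows. Apart from the per-level counting argument, everything here is routine bookkeeping.
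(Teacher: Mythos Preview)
Your proof is correct and uses the same high-level decomposition as the paper: split $c = n_0 W + m_s H$, expand each width and height difference telescopically in the $d_j$'s and $e_j$'s, and exploit the dyadic structure $h_i = 2^{\mathrm{val}_2(i)}$ to extract a factor $O(\log s)$. The execution, however, is organized dually. The paper pads to the next power of two, groups the double sum by valuation level $\kappa$ first, and argues that for fixed $\kappa$ the index ranges $\{2^\kappa(1+2j)+1,\dots,2^\kappa(2+2j)\}$ are pairwise disjoint, so the inner sum is at most $m_s$; summing over the $O(\log s)$ levels gives the bound. You instead swap the order of summation, group by position $j$, and bound the multiplicity $N_j$ by noting that each valuation level contributes at most one admissible $i$. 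Your route is a little cleaner in that it avoids the reduction to $s$ a power of two; the paper's route makes the non-overlap of the dyadic intervals more visibly structural. One small phrasing point: the half-open interval of length $1$ contains exactly one integer, but that integer need not be odd, so the correct conclusion (which you do state) is \emph{at most} one admissible $i$ per level.
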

\begin{proof}
  We prove that with the choices in \cref{def:elli}, $\sum_{i=1}^{s-1}
  (m_{i+\ell_i}-m_i)$ is in $\softO(m_s)$; we omit the remaining part
  of the argument that proves that $\sum_{i=1}^{s-1} (n_{i-h_i}-n_i)$
  is in $\softO(n_0)$ in a similar manner.

  First, we reduce to the case where $s$ is a power of $2$. For $i \ge
  s$, set $\ell_i=0$ and $m_i=m_s$; the sum $\sum_{i=1}^{s-1}
  (m_{i+\ell_i}-m_i)$ is then equal to $\sum_{i=1}^{s'-1}
  (m_{i+\ell_i}-m_i)$, where $s'=2^k$ is the first power of two
  greater than or equal to $s$. Besides, this convention implies
  $m_{i+\ell_i}=m_{i+h_i }$ for all $i$.

  For a given $\kappa$ in $\{0,\dots,k-1\}$, the indices $i \in
  \{1,\dots,s'-1\}$ of 2-adic valuation $\kappa$ are the integers
  $2^\kappa (1+2j)$, for $j=1,\dots,2^{k-\kappa-1}-1$, so we can
  rewrite the sum $\sum_{i=1}^{s'-1} (m_{i+\ell_i}-m_i)$ as
  $$\sum_{\kappa = 0}^{k-1} \sum_{j = 0}^{2^{k-\kappa-1}-1}
  (m_{2^\kappa(1+ 2j)+2^\kappa} - m_{2^\kappa(1+ 2j)}) = \sum_{\kappa
    = 0}^{k-1} \sum_{j = 0}^{2^{k-\kappa-1}-1} (d_{2^\kappa(1+ 2j)+1}
  + \cdots + d_{2^\kappa(1+ 2j)+2^\kappa}),$$ where we set $d_i=0$ for
  $i > s$. In particular, for a fixed $\kappa$, the last index
  occurring at summation step $j$ is less than the first index
  occurring at $j+1$, so the inner sum is bounded above by
  $\sum_{i=1}^{s'} d_i = m_s$. It follows that $\sum_{i=1}^{s-1}
  (m_{i+\ell_i}-m_i)\le \sum_{\kappa = 0}^{k-1} m_s \in
  O(m_s\log(s))$. Since $s \le m_s$, our claim is proved.
\end{proof}

%%%%%%%%%%%%%%%%%%%%%%%%%%%%%%%%%%%%%%%%%%%%%%%%%%%%%%%%%%%%

\subsection{Mixed radix representation}\label{ssec:mr}

In this subsection, we discuss an alternative basis for our
polynomials.
Our motivation is the following: if $\mathcal G = (g_0,\dots,g_s)$ is the
minimal, reduced lexicographic Gr\"obner basis that we want to use in our reduction
algorithm, we saw that for $i=0,\dots,s$, $g_i$ can be written as $M_i
G_i$, with $M_i$ of degree $m_i$ in $\K[x]$ and $G_i \in \K[x,y]$
monic in $y$, of degree $n_i$ in $y$. Recall also that for
$i=1,\dots,s$ we write $D_{i}=M_{i}/M_{i-1}$, which is a polynomial of
degree $d_i=m_i-m_{i-1}$ in $\K[x]$.

The main reduction algorithm will perform many univariate reductions
modulo the polynomials $M_1,\dots,M_s$. When working with $\langle
x,y\rangle$-primary ideals, all $M_i$'s are powers of $x$, so these
operations are free of arithmetic cost. In general, though, this is
not the case anymore, if the inputs are represented on the monomial
basis.  In this paragraph, we introduce a mixed radix representation
where reductions by the $M_i$'s are free, and we discuss conversion
algorithms.

Given polynomial $\bm K = (K_1,\dots,K_t)$ in $\K[x]$, with
respective degrees $k_1,\dots,k_t$, and writing $h=k_1 + \cdots + k_t$,
we consider the $\K$-linear mapping
\begin{align*}
\Phi_{\bm K}:\K[x]_{< k_1} \times \cdots \times \K[x]_{<k_t} & \to \K[x]_{ < h}\\
(F_1,\dots,F_t) & \mapsto F_1 +  K_1 F_2 + K_1 K_2 F_3 + \cdots + K_1 \cdots K_{t-1} F_t.
\end{align*}
The domain and codomain both have dimension $h$; from this, we easily
deduce that $\Phi_{\bm K}$ is a $\K$-vector space isomorphism. For
$F$ in $\K[x]_{< h}$, we call $(F_1,\dots,F_t)=\Phi_{\bm K}^{-1}(F)$
its {\em mixed radix} representation with respect to the basis $\bm K$.

We will rely on the following fact: given $(F_1,\dots,F_t)=\Phi_{\bm
  K}^{-1}(F)$, for $i$ in $\{1,\dots,t\}$, the mixed radix
representation of $F \bdiv K_1 \cdots K_i$, with respect to the basis
$(K_{i+1},\dots,K_t)$, is $(F_{i+1},\dots,F_t)$, so we have access to it
free of cost. Similarly, the mixed radix representation of $F \brem
K_1\cdots K_i$, with respect to the basis $(K_1,\dots,K_i)$, is
$(F_1,\dots,F_i)$. In particular, if $F$ is given in its mixed radix
representation, quotient and remainder by the product $K_1\cdots K_i$
are free; we still denote these operations by div and rem.

Conversely, for $F$ of degree less than $k_{i+1} + \cdots + k_t$,
given on the mixed radix basis associated to $(K_{i+1},\dots,K_t)$ as a
vector $(F_{i+1},\dots,F_t)$, the mixed radix representation of $K_1
\cdots K_i F$, for the basis $(K_{1},\dots,K_t)$, is
$(0,\dots,0,F_{i+1},\dots,F_t)$, so it can be computed for free.

For completeness, we give algorithms with softly linear runtime to
apply $\Phi_{\bm K}$ and its inverse. These are elementary variants of
the algorithms for Chinese remaindering
in~\cite[Chapter~10.3]{GaGe13}, or generalized Taylor
expansion~\cite[Chapter~9.2]{GaGe13}. We start with the conversion
from the mixed radix to monomial representation.

\begin{algorithm}[H]
  \algoCaptionLabel{FromMixedRadix}{(F_1,\dots,F_t), (K_1,\dots,K_t)}
  \begin{algorithmic}[1]
    \Require $(F_1,\dots,F_t)$ in $\K[x]_{< k_1} \times \cdots \times \K[x]_{<k_t}$, $\bm K = (K_1,\dots,K_t)$ of
    respective degrees $k_1,\dots,k_t$
    \Ensure $\Phi_{\bm K}(F_1,\dots,F_t) \in \K[x]_{< h}$, with $h=k_1 + \cdots + k_t$
    \State \InlineIf{$t=1$} {\Return $F_1$}
    \State $t' \gets \lceil t/2\rceil$
    \State $L \gets  \Call{algo:FromMixedRadix}{(F_1,\dots,F_{t'}), (K_1,\dots,K_{t'})}$
    \State $R \gets  \Call{algo:FromMixedRadix}{(F_{t'+1},\dots,F_t), (K_{t' +1},\dots,K_t)}$
    \If{$R=0$}\label{step:check0}
    \State \Return $L$
    \Else
    \State \Return $L + K_1 \cdots K_{t'} R$ \label{step:mulP}
    \EndIf
  \end{algorithmic}
\end{algorithm}
Correctness is clear: if we write $F=\Phi_{\bm K}(F_1,\dots,F_t)$,
then the previous discussion shows that $L = F \brem K_1 \cdots
K_{t'}$ and $R = F \bdiv K_1 \cdots K_{t'}$, so that the
output is indeed $F$. If we enter \cref{step:mulP}, computing $P$
takes $\softO(k_1 + \cdots + k_{t'})$ operations
$(+,\times)$ in $\K$~\cite[Lemma~10.4]{GaGe13}; however, in this case
$R$ is nonzero, so $F$ has degree at least $k_1 + \cdots + k_{t'}$,
and $\softO(k_1 + \cdots + k_{t'})$ is $\softO(\deg(F))$.  It follows
that, excluding the recursive calls, the cost of a single call to
\algoName{algo:FromMixedRadix} is $\softO(\deg(F))$ if $\deg(F) \ge k_1
+ \cdots + k_{t'}$, and zero otherwise.

There are $O(\log(\deg(F)))$ levels of the recursion tree that will
incur a nonzero cost, and the degrees of the polynomials computed at
any of these levels add up to at most $\deg(F)$. Hence, the overall
cost is $\softO(\deg(F))$ operations $(+,\times)$ in $\K$.

For the inverse operation, the algorithm is recursive as well.  Using
the test at \cref{step:degF}, we avoid doing any computation if $F$
has degree less than $k_1 + \cdots + k_{t'}$. The discussion is as
above, yielding a runtime of $\softO(\deg(F))$ operations $(+,\times)$
in $\K$.

\begin{algorithm}[H]
  \algoCaptionLabel{ToMixedRadix}{F, (K_1,\dots,K_t)}
  \begin{algorithmic}[1]
    \Require $F$ in $\K[x]_{< h}$, $\bm K=(K_1,\dots,K_t)$ of
    respective degrees $k_1,\dots,k_t$, with $h=k_1 + \cdots + k_t$
    \Ensure $(F_1,\dots,F_t)=\Phi_{\bm K}^{-1}(F)$
    \State \InlineIf{$t=1$} {\Return $(F)$}
    \State $t' \gets \lceil t/2\rceil$
    \If{$\deg(F) < k_1 + \cdots + k_{t'}$}\label{step:degF}
    \State \Return $\Call{algo:ToMixedRadix}{F, (K_1,\dots,K_{t'})} \text{~cat~} (0,\dots,0)$ \Comment{$t-t'$ zeros}
    \Else
    \State $P \gets K_1 \cdots K_{t'}$
    \State $Q,R \gets F \bdiv P, F \brem P$
    \State \Return $\Call{algo:ToMixedRadix}{R, (K_1,\dots,K_{t'})} \text{~cat~}\Call{algo:ToMixedRadix}{Q, (K_{t' +1},\dots,K_t)}$
    \EndIf
  \end{algorithmic}
\end{algorithm}

In the next paragraphs, we apply these algorithms to polynomials in
$\K[x,y]$ (we use the same names for the algorithms). In this case, we
simply proceed coefficient-wise with respect to $y$, the mixed-radix
representation of $F \in \K[x,y]$ being now a two-dimensional array.
If the sum of the degrees of $K_1,\dots,K_t$ is $h$, and for $F$ in
$\K[x,y]$ supported on an initial segment $\iniU$, with also $\deg(F,x) <
h$, the runtime of both algorithms is $\softO(|\iniU|)$.

%%%%%%%%%%%%%%%%%%%%%%%%%%%%%%%%%%%%%%%%%%%%%%%%%%%%%%%%%%%%

\subsection{The main algorithm}\label{ssec:reduction}

We can now use the results from the previous subsections in order to
give an algorithm for the reduction of a polynomial $f\in\K[x,y]$
modulo a minimal reduced lexicographic Gr\"obner basis
$\mathcal{G}=(g_0,\dots,g_s)$. For the time being, we only consider
the ``balanced'' case, where $f$ is already reduced modulo $g_0$ and
$g_s$. Let us write, as usual, the initial terms of ${\mathcal G}$ as
$$\bm E = (y^{n_0}, x^{m_1} y^{n_1}, \dots, x^{m_{s-1}}
y^{n_{s-1}},x^{m_s})$$ with the $m_i$'s increasing and the $n_i$'s
decreasing, and let $\iniS$ be the rectangle $\{0,\dots,m_s-1\}\times
\{0,\dots,n_0-1\}$. Then, our assumption is that $f$ is in
$\K[x,y]_\iniS$. More general inputs can be handled by performing a
reduction by $(g_0,g_s)$ first; this is discussed in the last
paragraph of this section.

In what follows, we let $\iniT$ be the initial segment determined by
${\mathcal G}$, and $\delta=\dim_\K(\K[x,y]/\mathcal{G})$ be the
degree of~${\mathcal G}$.

\paragraph{Overview of the algorithm} 
Given $f$ in $\K[x,y]$ with $\deg(f,x) < m_s$ and $\deg(f,y) < n_0$,
our main algorithm \nameref{algo:Reduction} computes $r = f \brem \mathcal G$ by calling $s-1$ times a procedure called
\nameref{algo:PartialReduction}, which is described further. The main
algorithm returns the remainder $r$, together with quotients
$Q_1,\dots,Q_{s-1}$, such that $f=Q_1 g_1 + \cdots + Q_s g_s + r$.  While we
do not need the quotients in this paper, we return them as a byproduct
that could possibly be of use in other contexts (the algorithm does
not compute the last quotient $Q_s$, but it would be straightforward
to deduce it from the output, if needed). Since we assume $\deg(f,y) <
n_0$, $g_0$ does not appear in the reduction equality.

The mixed radix basis is used throughout the algorithm to handle
intermediate data; input and output are on the usual monomial
basis.

\begin{algorithm}[H]
  \algoCaptionLabel{Reduction}{f,{\mathcal G}}
  \begin{algorithmic}[1]
    \Require $f$ in $\K[x,y]$, ${\mathcal G}=(g_0,\dots,g_s)$ as above
    \Assumptions  $\deg(f,x) < m_s$, $\deg(f,y) < n_0$
    \Ensure $f \brem {\mathcal G}$ and quotients $Q_1,\dots,Q_{s-1}$
    \State $M_0 \gets 1$, $G_0 \gets g_0$
    \For{$i=1,\dots,s$} \label{step:mainFOR}
    \State $M_i \gets \textsc{PolynomialCoefficient}(g_i,y^{n_i}) \in \K[x]$\label{step:RMi}
    \State $G_i \gets g_i \bdiv M_i$
    \State $D_i \gets M_{i} \bdiv M_{i-1}$
    \EndFor
    \State $f^{(0)} \gets \Call{algo:ToMixedRadix}{f, (D_1,\dots,D_s)}$
    \Comment{$f^{(0)}$ is on the mixed radix basis}
    \For{$i=1,\dots,s-1$}
    \State {$f^{(i)},Q_i \gets \Call{algo:PartialReduction}{f^{(i-1)}, i}$}
    \Comment{all $f^{(i)}$ are on the mixed radix basis}
    \EndFor
    \State \Return $\Call{algo:FromMixedRadix}{f^{(s-1)},D_1,\dots,D_s},Q_1,\dots,Q_{s-1}$

  \end{algorithmic}
\end{algorithm}
To simplify notation, the polynomials $g_0,\dots,g_s$,
$G_0,\dots,G_s$, $M_0,\dots,M_s$ and $D_1,\dots,D_s$, the latter of
which are computed at the beginning of the main algorithm, are assumed
to be known in our calls to \algoName{algo:PartialReduction}, rather
than passed as arguments.

The main result in this section is the following proposition. The
runtime given here is softly linear in $n_0m_s$ and $s\delta$: the
former represents the size of the input polynomial $f$, and the latter
is the upper bound on the number of coefficients needed to represent
${\mathcal G}$ discussed in \cref{ssec:structTh}. Whether a better
algorithm is possible (which would not need all coefficients of ${\mathcal G}$, but only, for instance, its Gr\"obner parameters) is not clear
to us.

\begin{proposition}\label{prop:red}
  Given $f$ and ${\mathcal G}$, with $\deg(f,x) < m_s$ and $\deg(f,y) <
  n_0$, \algoName{algo:Reduction} returns $f \brem {\mathcal G}$ using
  $\softO(n_0 m_s + s \delta)$ operations $(+,\times)$ in $\K$.
\end{proposition}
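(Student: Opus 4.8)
The plan is to prove correctness and the cost bound separately, treating \nameref{algo:PartialReduction} as a black box with a specification and a cost estimate (both established when that routine is analysed), and assembling these with the paving analysis of \cref{sec:paving} and the conversion bounds of \cref{ssec:mr}.

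For correctness, I would first observe that the preprocessing loop recovers exactly the data of Lazard's structure theorem: since $g_i$ has initial term $x^{m_i}y^{n_i}$, its polynomial coefficient of $y^{n_i}$ is $M_i=D_1\cdots D_i$, so $G_i=g_i\bdiv M_i$ is monic of degree $n_i$ in $y$ and $D_i=M_i\bdiv M_{i-1}$ has degree $d_i$. I would then carry an invariant along the main loop: identifying each $f^{(i)}$ with the polynomial it represents on the monomial basis, after the $i$-th iteration one has $f=Q_1g_1+\cdots+Q_ig_i+Qg_s+f^{(i)}$ for some polynomial $Q$ — the term $Qg_s$ absorbing the reduction modulo $M_s=g_s$ that is implicit in carrying only $s$ mixed-radix blocks — and $f^{(i)}$ is supported on $\iniT\cup\iniR_{i+1}\cup\cdots\cup\iniR_{s-1}$. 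The base case $i=0$ holds because $\deg(f,x)<m_s$ and $\deg(f,y)<n_0$ force $f$ to be supported on the rectangle $\{0,\dots,m_s-1\}\times\{0,\dots,n_0-1\}$, which equals $\iniT\sqcup\iniR_1\sqcup\cdots\sqcup\iniR_{s-1}$ by the covering property of the paving. The inductive step is precisely the specification of \nameref{algo:PartialReduction}: it subtracts from $f^{(i-1)}$ a multiple of $g_i$ that clears the block $\iniR_i$ and whose only new terms have $x$-degree $<m_s$ and $y$-degree $<n_i$; since such terms cannot re-enter $\iniR_1,\dots,\iniR_i$ (those blocks having all $y$-coordinates $\ge n_i$) and, inside the ambient rectangle, lie in $\iniT\cup\iniR_{i+1}\cup\cdots\cup\iniR_{s-1}$, the invariant is preserved; one also notes that $g_0$ is never used, every $f^{(j)}$ having $y$-degree $<n_0$. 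Taking $i=s-1$ gives $f-f^{(s-1)}\in\langle\mathcal G\rangle$ with $f^{(s-1)}$ supported on $\iniT$, i.e.\ on the complement of $in(\langle\mathcal G\rangle)$, so $f^{(s-1)}$ is the normal form, and \nameref{algo:FromMixedRadix} returns it on the monomial basis.

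For the cost I would add up four contributions of operations $(+,\times)$ in $\K$. The preprocessing costs $\softO(s\delta)$: each $M_i$ is free to read; each $D_i=M_i\bdiv M_{i-1}$ costs $\softO(m_i)$, with $\sum_i m_i\le s\,m_s\le s\delta$; and each $G_i=g_i\bdiv M_i$ costs $\softO(n_im_i+\delta_i)$ by a coefficient-wise exact division, where $\sum_i n_im_i\le\sum_i\delta\in O(s\delta)$ using $n_im_i\le n_{i-1}m_i\le\delta$, and $\sum_i\delta_i=\sum_j j\,d_jn_{j-1}\le s\delta$. The two conversions \nameref{algo:ToMixedRadix} and \nameref{algo:FromMixedRadix} run on polynomials of $x$-degree $<m_s=d_1+\cdots+d_s$ supported on the rectangle $\{0,\dots,m_s-1\}\times\{0,\dots,n_0-1\}$, respectively on $\iniT$, hence cost $\softO(n_0m_s)$ by \cref{ssec:mr}. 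Finally, the $s-1$ calls to \nameref{algo:PartialReduction}: by its cost analysis the $i$-th call costs $\softO(|\iniR_i|+n_0w_i+m_su_i+\delta_i)$, with $w_i=m_{i+\ell_i}-m_i$ and $u_i=n_{i-h_i}-n_i$ the width and height of $\iniR_i$; summing, $\sum_i|\iniR_i|=|\iniS|\le n_0m_s$, $\sum_i\delta_i\le s\delta$, and $n_0\sum_iw_i+m_s\sum_iu_i=c\in\softO(n_0m_s)$ by \cref{lemma:costpaving}. Adding the four contributions yields $\softO(n_0m_s+s\delta)$.

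I expect the genuine work to be the analysis of \nameref{algo:PartialReduction}, not the assembly above. The delicate points there are (i) that a single block reduction can be carried out entirely within the mixed-radix representation, so that multiplication by $M_i$ costs nothing and only the univariate-in-$y$ division by $G_i$ is charged — its cost being controlled by applying \cref{prop:mulST} to the Minkowski sum of a rectangle with the translate $\iniT_{\leftarrow i}$ — and (ii) that the side effects of clearing $\iniR_i$ always land in $\iniT$ or in a later block, so that together with the covering property of the paving each monomial of $\iniS$ is treated exactly once. Everything else reduces to the two bookkeeping inequalities $n_im_i\le\delta$ and $\sum_i\delta_i\le s\delta$, which are immediate from the definitions.
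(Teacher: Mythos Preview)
Your cost analysis is sound and mirrors the paper's: the preprocessing loop, the two mixed-radix conversions, and the sum of the $s-1$ calls to \nameref{algo:PartialReduction} each fit in $\softO(n_0 m_s + s\delta)$, the last via \cref{lemma:costpaving}. The correctness argument, however, rests on an invariant that fails in general. You claim that after step $i$ the polynomial $f^{(i)}$ is supported, in the monomial basis, on $\iniT\cup\iniR_{i+1}\cup\cdots\cup\iniR_{s-1}$, and justify the inductive step by saying that the subtraction of $Qg_i$ ``clears $\iniR_i$'' and introduces only terms of $y$-degree less than $n_i$. Both assertions are wrong: $\deg(Qg_i,y)<n_{i-h_i}$, not $n_i$, so when $h_i>1$ new terms may appear at heights in $[n_i,n_{i-h_i})$; and property~\ref{FH5} of \cref{lemma:reduce} only clears the block corresponding to $\iniR_i$ in the mixed-radix sense, which does not translate to monomial support once the $M_k$ are not powers of $x$. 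Concretely, take $s=3$, $\bm E=(y^3,xy^2,x^2y,x^3)$, $D_1=x-1$, $D_2=D_3=x$, and $f=x^2y^2\in\K[x,y]_{\iniR_2}$: after the call with $i=1$ one finds $\pcoeff(f^{(1)},y^2)=x^2-x+1$, so $f^{(1)}$ carries the monomial $xy^2\in\iniR_1$, violating your invariant (the algorithm is still correct --- step $i=2$ brings everything into $\iniT$).

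The paper avoids this by stating the invariant in mixed-radix terms. It introduces indices $b_{i,j}$ recording, at each height $j$, how far the paving $\iniT_i=\iniT\cup\iniR_1\cup\cdots\cup\iniR_i$ has advanced, and proves $A(i)$: for $n_i\le j<n_0$, the remainder $\pcoeff(f^{(i)},y^j)\brem M_{b_{i,j}}$ has degree less than $m_{b_{0,j}}$. The induction step then follows directly from items~\ref{FH3}--\ref{FH5} of \cref{lemma:reduce}. At $i=s-1$ one has $b_{s-1,j}=s$ for all $j$, and since $\deg(f^{(s-1)},x)<m_s$ the remainder by $M_s$ is the polynomial itself, giving support in $\iniT$. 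Your argument becomes correct precisely when all $M_i=x^{m_i}$ (the $\langle x,y\rangle$-primary case); in general the invariant must be phrased relative to the polynomials $M_k$, not the monomials $x^{m_k}$.
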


Before proving the proposition, we mention an important particular
case, where a simplified runtime is available.  Suppose that $e_i=1$
for all $i$, that is, that all steps in the staircase have height $1$.
In this case, $n_0=s$, and since we have $m_s \le \delta$, we obtain
$n_0 m_s \le s \delta$. In other words, the runtime of the algorithm
is simply $\softO(s \delta)$.

\paragraph{A single reduction step} 
We start with a description of the key subroutine,
\algoName{algo:PartialReduction}.

In \cref{sec:paving}, we described a way to cover
$\iniS=\{0,\dots,m_s-1\}\times \{0,\dots,n_0-1\} - \iniT$ by
rectangles $\iniR_i = \{m_i,\dots,m_{i +\ell_i}-1\} \times
\{n_i,\dots,n_{i- h_i}-1\}$, with
 $h_i =2^{{\rm val}_2(i)}$ 
and
$\ell_i=\min(h_i , s-i)$ 
for
$i=1,\dots,s-1$. 

In \algoName{algo:PartialReduction}, we are given $f
\in\K[x,y]_{\iniS}$, and an index $i$ in $\{1,\dots,s-1\}$. The
essential operation is a Euclidean division with respect to the
variable $y$, with coefficients suitably reduced with respect to $x$;
the mixed radix basis is used to control the cost of this reduction in
$x$. We prove below that the output $r$ has the same remainder as $f$
modulo ${\mathcal G}$; we also return the partial quotient $Q$, which
is supported on a translate of $\iniR_i$.

\begin{algorithm}
  \algoCaptionLabel{PartialReduction}{f,i}
  \begin{algorithmic}[1]
    \Require $f$ in $\K[x,y]$, $i$ in $\{1,\dots,s-1\}$
    \Assumptions  $\deg(f,x) < m_s$, $\deg(f,y) < n_0$, $i$ in $\{1,\dots,s-1\}$. $f$ is given on the
    mixed radix basis associated to $D_1,\dots,D_s$
    \Ensure $r$ and $Q$ in $\K[x,y]$. $r$ is given on the mixed radix basis associated to $D_1,\dots,D_s$
    \State $h_i \gets 2^{{\rm val}_2(i)}$, $\ell_i \gets  \min(h_i,s-i)$
    \State $F_1 \gets f \bdiv M_i$  \Comment{division in the mixed radix basis} \label{getF1}
    \Statex\Comment{$F_1$ is given on the mixed radix basis associated to $D_{i+1},\dots,D_s$}
    \State $F_2 \gets F_1 \brem D_{i+1}\cdots D_{i+\ell_i} $\label{getF2}
    \Comment{division in the mixed radix basis}
    \Statex\Comment{$F_2$ is given on the mixed radix basis associated to $D_{i+1},\dots,D_{i+\ell_i}$}
    \State $ F_3 \gets F_2 \brem y^{n_{i-h_i}}$\label{getF3}
    \State $F_4 \gets \Call{algo:FromMixedRadix}{F_3,(D_{i+1},\dots,D_{i+\ell_i}) }$ \Comment{$F_4$ is on the monomial basis}
    \label{getF4}
    \State $q \gets F_4 \bdiv G_i$ in $\A[y]$ \Comment{$G_i$ such that $g_i = M_iG_i$, $\A =\K[x]/\langle D_{i+1}\cdots D_{i+\ell_i} \rangle$}\label{getq}
    \State let $Q$ be the canonical lift of $q$ to $\K[x,y]$ \Comment{$\deg(Q,x) < m_{i+\ell_i}-m_i$}
    \State $V \gets \Call{algo:Multiply}{Q, \{0,\dots,m_{i+\ell_i}-m_i-1\}\times \{0,\dots,n_{i-h_i}-n_i-1\},G_i,\iniT}$
    \label{getV}
    \Statex \Comment{$V = Q G_i$ on the monomial basis}
    \State $V_1 \gets V\brem (D_{i+1}\cdots D_s)$
    \Comment{$V_1 = Q G_i \brem (D_{i+1}\cdots D_s)$ on the monomial basis}\label{getV1}
    \State $V_2 \gets \Call{algo:ToMixedRadix}{V_1, (D_{i+1},\dots,D_s)}$\label{getV2}
    \Statex \Comment{$V_2=Q G_i \brem (D_{i+1}\cdots D_s)$, given on the mixed radix basis associated to $D_{i+1},\dots,D_s$}
    \State $V_3 \gets M_i V_2$
    \Comment{multiplication in the mixed radix basis}
    \Statex\Comment{$V_3=Q g_i \brem M_s$, given on the mixed radix basis associated to $D_{1},\dots,D_s$}
    \State $r \gets f - V_3$  \label{step:red_G}
    \Comment{subtraction  in the mixed radix basis}
    \Statex\Comment{$r=(f-Q g_i) \brem M_s$, given on the mixed radix basis associated to $D_{1},\dots,D_s$}
    \State \Return $r,Q$
  \end{algorithmic}
\end{algorithm}

\begin{lemma}\label{lemma:reduce}
  Calling  $\Call{algo:PartialReduction}{f, i}$ takes
  $$\softO(n_0(m_{i+\ell_i}-m_i) + m_s(n_{i-h_i}-n_i) + \delta)$$
  operations $(+,\times)$ in $\K$, with $h_i= 2^{{\rm val}_2(i)}$ and
  $\ell_i = \min(h_i,s-i)$. The output $r,Q$ satisfies the following
  properties:
  \begin{enumerate}
  \item $\deg(r,x) < m_s$ and $\deg(r,y) < n_0$
  \item $r \brem {{\mathcal G}} = f \brem {\mathcal G}$
  \item $r \brem {M_i} = f \brem {M_i}$ \label{FH3}
  \item $r \bdiv y^{n_{i-h_i}} = f \bdiv y^{n_{i-h_i}}$ \label{FH4}
  \item $((r \bdiv M_i) \bdiv y^{n_i}) \brem (D_{i+1}\cdots D_{i+\ell_i}, y^{n_{i-h_i}-n_i}) = 0$ \label{FH5}
  \item $\deg(Q,x) < m_{i+\ell_i}-m_i$ and $\deg(Q,y) < n_{i-h_i}-n_i$
  \end{enumerate}
\end{lemma}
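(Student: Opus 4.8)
The plan is to trace \algoName{algo:PartialReduction} line by line, tracking at each step both the support of the intermediate quantity (to get the cost) and the algebraic identity it satisfies modulo the relevant polynomials (to get the six claimed properties). The key structural fact to exploit is Lazard's decomposition $g_i = M_i G_i$ together with $M_s = M_i \cdot (D_{i+1}\cdots D_s)$, which lets us pass freely between ``reduction modulo $g_i$'' and ``reduction of the $y$-polynomial by $G_i$ over the ring $\K[x]/\langle D_{i+1}\cdots D_s\rangle$''; the mixed-radix basis makes all the divisions and remainders by products of consecutive $D_j$'s cost-free (as established in \cref{ssec:mr}).

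\emph{Cost.} First I would bound the support of each intermediate polynomial. Starting from $f \in \K[x,y]_\iniS$ with $|\iniS| = n_0 m_s$: the operations at lines \ref{getF1}, \ref{getF2}, \ref{getF3} are div/rem by products of consecutive $D_j$'s or by a power of $y$, hence free in the mixed-radix basis; $F_3$ is supported on a set of size $\softO((m_{i+\ell_i}-m_i)(n_{i-h_i}-n_i))$. The conversion at line \ref{getF4} and the Euclidean division at line \ref{getq}, which is a division in $y$ over $\A = \K[x]/\langle D_{i+1}\cdots D_{i+\ell_i}\rangle$ with $\dim_\K \A = m_{i+\ell_i}-m_i$, both cost $\softO((m_{i+\ell_i}-m_i)(n_{i-h_i}-n_i))$ operations, and this is $\softO(n_0(m_{i+\ell_i}-m_i) + m_s(n_{i-h_i}-n_i))$. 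The product $V = Q G_i$ at line \ref{getV} uses \cref{prop:mulST} with $\iniS$-part a rectangle of area $(m_{i+\ell_i}-m_i)(n_{i-h_i}-n_i)$ and $\iniT$-part the initial segment of cardinal $\delta$, giving $\softO(|\{0,\dots,m_{i+\ell_i}-m_i-1\}\times\{0,\dots,n_{i-h_i}-n_i-1\} + \iniT|)$, which by the Minkowski-sum cardinal formula in \cref{sec:def} is $\softO((m_{i+\ell_i}-m_i)n_0 + m_s(n_{i-h_i}-n_i) + \delta)$ (using $m_{i+\ell_i}-m_i \le m_s$, $n_{i-h_i}-n_i \le n_0$). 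The remaining operations — the rem at line \ref{getV1}, the conversions at lines \ref{getV2}, the free multiplication by $M_i$ and the subtraction at line \ref{step:red_G} — are all $\softO(\delta + n_0 m_s)$ but in fact supported appropriately; a careful accounting shows the total is the claimed bound. I would note $V_1 = Q G_i \brem (D_{i+1}\cdots D_s)$ is supported on an initial segment of cardinal $\le$ something like $2\delta$ by the translate structure, keeping everything within budget.

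\emph{Correctness.} Tracking identities: by construction $V_3 = M_i V_2 = M_i\,(Q G_i \brem (D_{i+1}\cdots D_s)) = Q g_i \brem M_s$ since $M_s = M_i(D_{i+1}\cdots D_s)$ and $g_i = M_i G_i$; hence $r = f - V_3 = (f - Q g_i)\brem M_s$. Property (2) is then immediate: $r \equiv f - Q g_i \equiv f \pmod{\langle g_s\rangle}$ since $g_s = M_s$, and $g_s \in \mathcal G$, so $r \brem \mathcal G = (f - Q g_i)\brem \mathcal G = f \brem \mathcal G$. Property (3) follows because $M_i \mid M_s$ and $M_i \mid V_3$ (indeed $V_3 = M_i V_2$), so $r \brem M_i = f \brem M_i$. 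Property (1): $\deg(r,y) < n_0$ since $\deg(f,y)<n_0$, $\deg(Q,y) < n_{i-h_i}-n_i$ and $\deg(G_i,y)=n_i$, so $\deg(Q g_i, y) < n_{i-h_i}\le n_0$, and reduction mod $M_s$ does not change $y$-degree; $\deg(r,x) < m_s$ since $r$ is a remainder mod $M_s = g_s$ of $x$-degree $m_s$. Property (6) is the degree bound on $Q$ coming from line \ref{getq}: $q = F_4 \bdiv G_i$ with $F_4$ of $x$-degree $< m_{i+\ell_i}-m_i$ (living over $\A$) and of $y$-degree $< n_{i-h_i}$, while $G_i$ is monic of $y$-degree $n_i$, so the quotient has $y$-degree $< n_{i-h_i}-n_i$ and inherits $x$-degree $< m_{i+\ell_i}-m_i$.

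\emph{The two delicate properties, (4) and (5)} — which I expect to be the main obstacle — require unwinding what $Q$ actually is. By the mixed-radix bookkeeping, $F_1 = f \bdiv M_i$, then $F_2 = F_1 \brem (D_{i+1}\cdots D_{i+\ell_i})$, then $F_3 = F_2 \brem y^{n_{i-h_i}}$, so $F_4$ (after conversion) equals $((f\bdiv M_i)\bmod (D_{i+1}\cdots D_{i+\ell_i}))$ truncated below degree $n_{i-h_i}$ in $y$; and $q = F_4 \bdiv G_i$ over $\A$. For (4): the subtracted term $Q g_i = Q M_i G_i$ has $Q G_i$ of $y$-valuation $\ge 0$ but the point is that $V_3 = Q g_i \brem M_s$ only affects $y$-degrees $\ge n_i$ — no, more carefully, one shows $Q G_i \equiv F_4 - (\text{remainder})$ so the high-$y$-degree part ($\ge n_{i-h_i}$) of $Q G_i$ is controlled; since $\deg(Q,y)<n_{i-h_i}-n_i$ and the relevant cancellation is arranged so that $f$ and $r$ agree in $y$-degrees $\ge n_{i-h_i}$, i.e. $r \bdiv y^{n_{i-h_i}} = f \bdiv y^{n_{i-h_i}}$. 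For (5): $(r\bdiv M_i)\bdiv y^{n_i}$ is $(F_1 - Q G_i \bmod(D_{i+1}\cdots D_s))\bdiv y^{n_i}$, and since $q = F_4 \bdiv G_i$ is precisely the quotient making $F_4 - q G_i$ have $y$-degree $< n_i$ over $\A$, the degree-$\ge n_i$ part of $F_1 - Q G_i$ vanishes modulo $(D_{i+1}\cdots D_{i+\ell_i})$ in the range of $y$-degrees below $n_{i-h_i}$; translating this through the div by $M_i$ and $y^{n_i}$ gives exactly the stated vanishing modulo $(D_{i+1}\cdots D_{i+\ell_i},\, y^{n_{i-h_i}-n_i})$. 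The crux is matching up the two truncations (the rem by $y^{n_{i-h_i}}$ at line \ref{getF3} and the resulting $y$-degree bound $< n_{i-h_i}-n_i$ on $Q$) with the modulus $D_{i+1}\cdots D_{i+\ell_i}$ versus $D_{i+1}\cdots D_s$ appearing at different lines; I would handle this by introducing once and for all the identification $r \equiv f - Q g_i \pmod{M_s}$ and then reading off each property as a statement about a specific ``window'' $(u,v)$ with $m_i \le u < m_{i+\ell_i}$, $n_i \le v < n_{i-h_i}$ in the exponent lattice.
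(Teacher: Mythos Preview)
Your approach is essentially the paper's: trace the algorithm, establish $r=(f-Qg_i)\brem M_s$, and read off the six properties. A few concrete points need tightening before this is a proof rather than a plan.

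\textbf{Cost.} Your claim that $F_3$ (hence $F_4$) is supported on a set of size $\softO((m_{i+\ell_i}-m_i)(n_{i-h_i}-n_i))$ is off: $F_3 = F_2 \brem y^{n_{i-h_i}}$ has $y$-degree up to $n_{i-h_i}-1$, not $n_{i-h_i}-n_i-1$, so the conversion at line~\ref{getF4} and the division at line~\ref{getq} cost $\softO\big((m_{i+\ell_i}-m_i)\,n_{i-h_i}\big)$. This is still $\softO(n_0(m_{i+\ell_i}-m_i))$, so your final bound survives, but the intermediate statement is wrong. You also omit that before line~\ref{getq} one must reduce $G_i$ modulo $D_{i+1}\cdots D_{i+\ell_i}$, which costs $\softO(\delta)$. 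Finally, your assertion that $V_1$ is supported on an initial segment of cardinal $\le 2\delta$ is not correct in general (take $i$ small and $n_{i-h_i}$ close to $n_0$); the right bound for lines~\ref{getV1}--\ref{step:red_G} is simply $|\mathrm{supp}(V)|$, which by the Minkowski-sum formula already equals the target $\softO(n_0(m_{i+\ell_i}-m_i)+m_s(n_{i-h_i}-n_i)+\delta)$.

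\textbf{Properties (4) and (5).} Your sketch wanders (``only affects $y$-degrees $\ge n_i$ --- no, more carefully\dots''). The clean way, which the paper uses, is to introduce $h\in\K[x,y]$ with $r=f-Qg_i+hM_s$ (so $h=-(Qg_i)\bdiv M_s$). For~(4): since $\deg(Qg_i,y)<n_{i-h_i}$ and $M_s\in\K[x]$, also $\deg(h,y)<n_{i-h_i}$, hence $\deg(r-f,y)<n_{i-h_i}$. For~(5): from $r\bdiv M_i = F_1 - QG_i + h\,D_{i+1}\cdots D_s$, substitute $F_1=F_4+K\,y^{n_{i-h_i}}+L\,D_{i+1}\cdots D_{i+\ell_i}$ and $F_4=QG_i+P+L'\,D_{i+1}\cdots D_{i+\ell_i}$ with $\deg(P,y)<n_i$; taking $\bdiv\, y^{n_i}$ and reducing modulo $(D_{i+1}\cdots D_{i+\ell_i},\,y^{n_{i-h_i}-n_i})$ then kills every term. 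Your ``window in the exponent lattice'' intuition is right, but you need these explicit decompositions to make it rigorous.
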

\begin{proof}
  We first verify that all steps are well-defined, and discuss degree
  properties of the polynomials in the algorithm.
  
  As per our discussion in the preamble, the division and
  remainder at \cref{getF1,getF2} output a bivariate polynomial $F_2$
  on the mixed radix basis associated to $D_{i+1},\dots,D_{i+\ell_i}$.
  The polynomial $F_3$ is written on the same basis; $F_4$ represents
  the same polynomial, this time on the monomial basis. 
  
  That polynomial has $y$-degree less than $n_{i-h_i}$; since $G_i$
  has $y$-degree $n_i$, $q$, and thus $Q$, have $y$-degree less than
  $n_{i-h_i}-n_i$. Since $Q$ also has $x$-degree less than
  $m_{i+\ell_i}-m_i$, it is supported on the rectangle
  $\{0,\dots,m_{i+\ell_i}-m_i-1\}\times \{0,\dots,n_{i-h_i}-n_i-1\}$
  (which is the translate of $\iniR_i$ to the origin). This proves the
  last claim in the lemma.

  On the other hand, $G_i$ is supported on $\iniT$ (this is true
  because $i \ge 1$; for $i=0$, the initial term of $G_0$, which is
  $y^{n_0}$, is not in $\iniT$), so altogether, the call to
  \nameref{algo:Multiply} at \cref{getV} is justified. The variables
  $V_1$ and $V_2$ then represent the same polynomial, namely $Q G_i
  \brem (D_{i+1} \cdots D_s)$, on two different bases (resp. monomial
  and mixed radix).  It follows that $V_3$ represents the polynomial
  \begin{align*}
    M_i (Q G_i  \brem (D_{i+1} \cdots D_s)) &= M_i Q G_i \brem (M_iD_{i+1} \cdots D_s)\\
    &= Q g_i \brem M_s.
  \end{align*}
  As we noted in the previous subsection, since $V_2$ is written on
  the mixed basis associated to $(D_{i+1},\dots,D_s)$, $V_3$ is
  written on the mixed basis associated to $(D_{1},\dots,D_s)$. Since
  this is also the case for $f$, the subtraction at \cref{step:red_G}
  is done coefficient-wise, and results in the polynomial $(f-Q g_i)
  \brem {M_s}$, written on the same mixed basis.

  This being said, we establish properties 1-5. First item: We have
  $\deg(f,y) < n_0$.  On the other hand, the degree bound on $Q$
  implies that $Q g_i$ has $y$-degree less than $n_{i-h_i}$. Since
  $n_{i-h_i} \le n_0$, the product $Q g_i$ has $y$-degree less than
  $n_0$ as well, and it is then also the case for $r$. The bound
  $\deg(r,x) < m_s$ holds by construction.

  Second item: we can write $r=f-Qg_i + h M_s=f-Q g_i + h g_s$, for
  some $h$ in $\K[x,y]$, so that $r-f$ is in the ideal $\langle {\mathcal G}
  \rangle$.

  Third item: consider again the expression $r=f-Qg_i + h {M_s}$,
  and notice that ${M_i}$ divides both $g_i$ and ${M_s}$.

  Fourth item: because $\deg(f,x) < m_s$, the quotient $h$ in the
  relation $r=f-Qg_i + h {M_s}$ is $- Q g_i \bdiv {M_s}$.  Since $Q
  g_i$ has $y$-degree less than $n_{i - h_i}$, it is thus also the
  case for $h$. This shows that $r \bdiv y^{n_{i-h_i}} = f \bdiv
  y^{n_{i-h_i}}$, as claimed.

  %% was: By definition, we have $F_1 = F_2 + K
  %% y^{n_{i-h_i}}$ for some $K$ in $\K[x,y]$, and $F_2 = Q G_i + P + L
  %% D_{i+1}\cdots D_{i+\ell_i}$ for some $L,P$ in $\K[x,y]$ with $P$ of
  %% $y$-degree less than $n_i$. 
  
  Fifth item: since $r = f - Q g_i + h {M_s} = f -Q {M_i} G_i + h
  {M_s}$, we have $r \bdiv {M_i} = F_1 - Q G_i + h
  D_{i+1} \cdots D_s$.  By definition, we have $F_1 =
  F_2 + L D_{i+1}\cdots D_{i+\ell_i}$ and $F_2 = F_3 + K
  y^{n_{i-h_i}}$ for some $K,L$ in $\K[x,y]$. $F_4$ is the same
  polynomial as $F_3$, written on a different basis, and satisfies
  $F_4 = Q G_i + P + L' D_{i+1}\cdots D_{i+\ell_i}$, for some $P$ and
  $L'$ in $\K[x,y]$, with $P$ of $y$-degree less than $n_i$.
  Altogether, we obtain
  $r \bdiv {M_i} = P + (L+L') D_{i+1}\cdots D_{i+\ell_i} + h  D_{i+1} \cdots D_s + K y^{n_{i-h_i}}$. 
  As a result,
  \[(r \bdiv {M_i}) \bdiv y^{n_i} = ((L+L') \bdiv y^{n_i}) D_{i+1}\cdots D_{i+\ell_i} +
  (h \bdiv y^{n_i}) D_{i+1} \cdots D_s + K y^{n_{i-h_i}-n_i}.\]
  Because $i + \ell_i \le s$, this expression taken modulo
  $(D_{i+1}\cdots D_{i+\ell_i}, y^{n_{i-h_i}-n_i})$ vanishes, as
  claimed.

  It remains to estimate the cost of the algorithm. The divisions with
  remainders at \cref{getF1,getF2} are free of cost (because we work
  in the suitable mixed radix bases); the same holds for 
  \cref{getF3}, since it only involves a power of $y$.

  Since $D_{i+1}\cdots D_{i+\ell_i}$ has degree $m_{i+\ell_i}-m_i$,
  the conversion at \cref{getF4} uses
  $\softO(n_{i-h_i}(m_{i+\ell_i}-m_i))$ operations $(+,\times)$ in
  $\K$, which is $\softO(n_0 (m_{i+\ell_i}-m_i))$.

  Prior to the division at \cref{getq}, $G_i$ has to be reduced modulo
  $D_{i+1} \cdots D_{i+\ell_i}$; proceeding coefficient-wise in $y$,
  this takes $\softO(|\iniT|)=\softO(\delta)$ operations $(+,\times)$
  in $\K$. Then, the division in $\A[y]$ takes $\softO(n_{i-h_i})$
  operations $(+,\times)$ in $\A$, which is
  $\softO(n_{i-h_i}(m_{i+\ell_i}-m_i))$ operations $(+,\times)$ in
  $\K$. For this expression, it will be enough to use the same upper
  bound $\softO(n_0 (m_{i+\ell_i}-m_i))$ as above.

  Next, we consider the cost of computing the product $V$ in
  $\K[x,y]$.  The input $Q$ has $x$-degree less than
  $m_{i+\ell_i}-m_i$ and $y$-degree less than $n_{i-h_i}-n_i$, whereas
  $G_i$ is supported on the initial segment $\iniT$ of height $n_0$,
  width $m_s$, and cardinal $\delta$. Hence, using \cref{prop:mulST}
  (and the remarks that follow the proposition on the size of the
  support of $Q G_i$), we see that $Q G_i$ can be computed in $\softO(
  (m_{i+\ell_i}-m_i)(n_{i-h_i}-n_i) + n_0(m_{i+\ell_i}-m_i) + m_s
  (n_{i-h_i}-n_i) + \delta)$ operations $(+,\times)$ in $\K$. This is also
  $\softO(n_0(m_{i+\ell_i}-m_i) + m_s (n_{i-h_i}-n_i) + \delta)$.

  The Euclidean division at \cref{getV1} is done in the monomial
  basis, proceeding coefficient-wise in $y$. Computing $D_{i+1} \cdots
  D_s$ takes $\softO(m_s)$ operations $(+,\times)$ in $\K$. Then, the
  reduction is done in quasi-linear time in the size of the support of
  $V$, that is, $\softO(n_0(m_{i+\ell_i}-m_i) + m_s (n_{i-h_i}-n_i) +
  \delta)$ again. Recall that for polynomials supported on an initial
  segment $\iniU$, the conversion to the mixed radix basis takes
  quasi-linear time in the size of $\iniU$. Here, the support $\iniU$
  is contained in the support of $V=Q G_i$, so the conversion at
  \cref{getV2} takes time $\softO(n_0(m_{i+\ell_i}-m_i) + m_s
  (n_{i-h_i}-n_i) + \delta)$ once more.

  The multiplication by $M_i$ in the mixed radix basis is free, as we
  simply prepend a vector of zeros to each entry of $V_2$ to obtain
  $V_3$. Finally, the polynomial subtraction at the last step involves
  one subtraction in $\K$ for each nonzero coefficient of $V_3$, so
  $\softO(n_0(m_{i+\ell_i}-m_i) + m_s (n_{i-h_i}-n_i) + \delta)$
  altogether.  
\end{proof}

\paragraph{Correctness of the main algorithm} The properties stated 
above allow us to prove that \algoName{algo:Reduction} correctly
computes the remainder of $f$ by ${\mathcal G}$.

We define indices $(b_{i,j})_{0 \le i < s, 0 \le j < n_0}$ in
$\{1,\dots,s\}$ as follows.  For $i=0,\dots,s-1$, let $\iniT_i \subset
\N^2$ be the union of the initial segment $\iniT$ and the rectangles
$\iniR_1,\dots,\iniR_i$; in particular, $\iniT_0 = \iniT$ and
$\iniT_{s-1}$ is the rectangle $\{0,\dots,m_s-1\} \times
\{0,\dots,n_0-1\}$. Then, for $i=0,\dots,s-1$ and $j=0,\dots,n_0-1$,
we let $b_{i,j} \in \{1,\dots,s\}$ be the smallest index $k$ such that
$(m_k,j)$ is not in $\iniT_i$. In particular, $b_{s-1,j}=s$ for all $j
< n_0$. On the other hand, for $i=0$, we see that any pair $(u,j)$
with $u < m_{b_{0,j}}$ is in $\iniT$, so $x^u y^j$ is reduced modulo
${\mathcal G}$.

Let $f^{(0)},\dots,f^{(s-1)}$ be the polynomials computed throughout
the algorithm (the first item of \cref{lemma:reduce} proves that these
polynomials are well-defined, and all supported on the rectangle
$\{0,\dots,m_s-1\} \times \{0,\dots,n_0-1\}$). We prove the following
claim, written $A(i)$ in the sequel, by induction on $i=0,\dots,s-1$:
for $n_i \le j < n_0$, the polynomial
$\pcoeff(f^{(i)},y^j)\brem M_{b_{i,j}} \in \K[x]$ has degree
less than $m_{b_{0,j}}$. For $i=0$, there is nothing to prove (since
no index $j$ needs to be considered). Suppose that $A(i-1)$ holds, for
some $i$ in $\{1,\dots,s-1\}$; we prove $A(i)$.

For $j \ge n_{i-h_i}$, \cref{FH4} of \cref{lemma:reduce} shows that
$\pcoeff(f^{(i)},y^j)=\pcoeff(f^{(i-1)},y^j)$. Since in that case we
also have $b_{i,j}=b_{i-1,j}$, our claim holds. Now, suppose that $j$
is in $\{n_i,\dots,n_{i-h_i}-1\}$; in this case, \cref{FH3,FH5} of the
same lemma imply that $\pcoeff(f^{(i)},y^j)\brem M_{i+\ell_i}$
is equal to $\pcoeff(f^{(i-1)},y^j)\brem M_i.$ On the other hand, we
also have $b_{i-1,j}=i$ and $b_{i,j}=i+\ell_i$, so the left-hand side
is the term $\pcoeff(f^{(i)},y^j)\brem M_{b_{i,j}}$ that appears in
our claim.  Thus, to conclude the induction proof, it is enough to
show that $\pcoeff(f^{(i-1)},y^j)\brem M_i$ has degree less than
$m_{b_{0,j}}$.  We do this using a further case discussion:
\begin{itemize}
\item if $j \ge n_{i-1}$, we can use the induction assumption. It
  implies that the remainder $\pcoeff(f^{(i-1)},y^j)\brem
  M_{b_{i-1,j}}$ has degree less than $m_{b_{0,j}}$. Since we saw that
  have $b_{i-1,j} = i$, we are done.
\item if $j < n_{i-1}$, we have $b_{0,j} = i$, so that 
  $m_{b_{0,j}}=m_i=\deg(M_i)$, and our claim holds as well.
\end{itemize}
Having established our induction claim, we can take $i=s-1$. Then,
$A(s-1)$ shows that for $j$ in $n_{s-1},\dots,n_0-1$,
$\pcoeff(f^{(s-1)},y^j)\brem M_{s}$ has degree less than
$m_{b_{0,j}}$.  By construction, $f^{(s-1)}$ is reduced modulo $M_s$,
so that $\pcoeff(f^{(s-1)},y^j)$ itself has degree less than
$m_{b_{0,j}}$.  Now, for $j$ in $0,\dots,n_{s-1}-1$, we have
$b_{0,j}=s$, so $\pcoeff(f^{(s-1)},y^j)$ has degree less than
$m_{b_{0,j}}$ as well in this case. Altogether, as we pointed out when
we introduced $m_{b_{0,j}}$, this proves that $f^{(s-1)}$ is reduced
modulo~${\mathcal G}$.

The second item of \cref{lemma:reduce} finally shows that $f \brem {\mathcal G} =
f^{(s-1)} \brem {\mathcal G}$, so $f^{(s-1)}$ is indeed the normal form of
$f$ modulo ${\mathcal G}$. This finishes the correctness proof.

\paragraph{Cost analysis}
For the cost analysis, we start with the computation of polynomials
$M_i$, $G_i$ and $D_i$, at the beginning of the main algorithm. Since
division by a monic univariate polynomial take softly linear time,
each pass in the loop at \cref{step:mainFOR} of \nameref{algo:Reduction}
takes $\softO(\delta)$ operations, for a total of $\softO(s \delta)$.

The conversions to and from the mixed radix basis take quasi-linear
time in the size of the support of $f$, that is, $\softO(n_0 m_s)$
operations. Then, it suffices to add the costs of the calls to
\nameref{algo:PartialReduction}. By \cref{lemma:reduce}, deducing
$f^{(i)}$ from $f^{(i-1)}$ takes $\softO(n_0(m_{i+\ell_i}-m_i) +
m_s(n_{i-h_i}-n_i) + \delta)$ operations in $\K$, with $\delta =
|\iniT|$, so it suffices to sum this quantity for $i=1$ to $s-1$. The
first two terms add up to a total of $\softO(
n_0\sum_{i=1}^{s-1}(m_{i+\ell_i}-m_i) + m_s
\sum_{i=1}^{s-1}(n_{i-h_i}-n_i)).$ \cref{lemma:costpaving} shows that
this sum is in $\softO(n_0 m_s)$, so taking into account
the term $\softO(\delta)$ in each summand, the total is 
$\softO(n_0m_s + s\delta)$, as claimed.

\paragraph{Generalization to arbitrary inputs and discussion}
If the input $f$ does not satisfy the conditions $\deg(f,x) < m_s$ and
$\deg(f,y) < n_0$, we fall back to this case by reduction modulo the
pair of polynomials $(g_0,g_s)$, which have respective initial terms
$y^{n_0}$ and $x^{m_s}$. The following straightforward algorithm
achieves this; we discuss possible improvements below.

\begin{algorithm}[H]
  \algoCaptionLabel{ReductionGeneralInput}{f,{\mathcal G}}
  \begin{algorithmic}[1]
    \Require $f$ in $\K[x,y]$, ${\mathcal G}=(g_0,\dots,g_s)$ 
    \Ensure $f \brem {\mathcal G}$ 
    \State $f_1 \gets f \brem g_s$
    \State $f_2 \gets f \brem g_0$ in $\A[y]$
    \Comment{$\A=\K[x]/\langle g_s \rangle$}
    \State let $f_3$ be the canonical lift of $f_2$ to $\K[x,y]$ \Comment{$\deg(f_3,x) < m_s$}
    \State \Return $\Call{algo:Reduction}{f_3,{\mathcal G}}$
  \end{algorithmic}
\end{algorithm}

\begin{proposition}\label{prop:genred}
  Given $f$ and ${\mathcal G}$, with $\deg(f,x) < d$ and $\deg(f,y) < e$,
  \algoName{algo:ReductionGeneralInput} returns $f \brem {\mathcal G}$
  using $\softO(e d + e m_s + n_0 m_s + s \delta)$ operations
  $(+,\times)$ in~$\K$. If $\mathcal G$ generates an $\langle
  x,y\rangle$-primary ideal, the runtime becomes $\softO(\delta m_s)$
  operations $(+,\times)$ in~$\K$.
\end{proposition}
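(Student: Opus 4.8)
The plan is to analyze \algoName{algo:ReductionGeneralInput} step by step, invoking \cref{prop:red} as a black box for the final call. First I would check correctness: since $g_s$ is monic in $x$ (its initial term is $x^{m_s}$, and by Lazard's theorem $g_s = M_s = D_1\cdots D_s$ is monic in $x$), reduction $f_1 \gets f \brem g_s$ is a well-defined univariate Euclidean division performed coefficient-wise in $y$, and yields a polynomial with $\deg(f_1,x) < m_s$ congruent to $f$ modulo $\langle g_s\rangle \subseteq \langle {\mathcal G}\rangle$. Then $g_0$ is monic in $y$ with initial term $y^{n_0}$, so working in $\A = \K[x]/\langle g_s\rangle$, the division $f_2 \gets f_1 \brem g_0$ in $\A[y]$ produces $f_2$ with $\deg(f_2,y) < n_0$; lifting to $f_3 \in \K[x,y]$ keeps $\deg(f_3,x) < m_s$ (the lift is taken with representatives of degree $< m_s$) and $\deg(f_3,y) < n_0$. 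Since $g_0 \in {\mathcal G}$, we have $f_3 \equiv f_1 \equiv f \pmod{\langle {\mathcal G}\rangle}$, so $f_3$ meets the hypotheses of \algoName{algo:Reduction} and $f_3 \brem {\mathcal G} = f \brem {\mathcal G}$; correctness then follows from \cref{prop:red}.

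Next I would bound the cost. The input $f$ is supported on the rectangle $\{0,\dots,d-1\}\times\{0,\dots,e-1\}$, of cardinal $ed$. Reducing $f \brem g_s$: proceeding coefficient-wise in $y$ (there are $e$ coefficients, each of degree $< d$ in $x$), each univariate division by the degree-$m_s$ polynomial $g_s$ costs $\softO(d)$ operations in $\K$ (division by a monic divisor is softly linear), so $\softO(ed)$ in total; the result $f_1$ has $x$-degree $< m_s$, so it is supported on $\{0,\dots,m_s-1\}\times\{0,\dots,e-1\}$. Reducing $f_1 \brem g_0$ in $\A[y]$ with $\A = \K[x]/\langle g_s\rangle$: this is a division of a degree-$(e-1)$ polynomial in $y$ by the monic degree-$n_0$ polynomial $g_0$, costing $\softO(e)$ operations in $\A$; one operation in $\A$ costs $\softO(m_s)$ operations in $\K$ (arithmetic modulo a degree-$m_s$ univariate polynomial), for $\softO(e\,m_s)$ in $\K$. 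Computing $g_0 \brem g_s$ once, needed to carry out this division, also fits in $\softO(n_0 m_s) \subseteq \softO(\delta + s\delta)$. Lifting $f_2$ to $f_3$ is free. Finally, \algoName{algo:Reduction} on $f_3$ costs $\softO(n_0 m_s + s\delta)$ by \cref{prop:red}. Adding these contributions gives $\softO(ed + e m_s + n_0 m_s + s\delta)$, as claimed.

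For the primary case, $\mathcal G$ generates an $\langle x,y\rangle$-primary ideal, so by Lazard's theorem $D_i = x^{d_i}$ and $g_s = M_s = x^{m_s}$. Here I would observe that $f \brem g_s = f \brem x^{m_s}$ is obtained by simply truncating $f$ in the $x$-direction, so it is free of arithmetic cost; likewise $\A = \K[x]/\langle x^{m_s}\rangle$, and a division by the monic-in-$y$ polynomial $g_0$ in $\A[y]$ costs $\softO(e)$ operations in $\A$, i.e.\ $\softO(e\,m_s)$ in $\K$ — but since we may assume $f$ has already been truncated, the relevant quantity is $\softO(n_0 m_s)$ from this step onward, or we simply absorb everything into the $\softO(\delta m_s)$ bound below. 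The dominant term is the call to \algoName{algo:Reduction}: by the remark following \cref{prop:red}, in the primary case all the mixed-radix conversions and univariate reductions by the $M_i = x^{m_i}$ are free, and the cost is $\softO(n_0 m_s + s\delta)$; using $n_0 \le \delta$, $s \le m_s \le \delta$ and (from \cref{bound_s}) $s \in O(\sqrt\delta)$, together with $n_0 m_s \le \delta m_s$ and $s\delta \le \delta m_s$, everything collapses to $\softO(\delta m_s)$.

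The main obstacle I anticipate is not any single estimate — each is routine — but making sure the bookkeeping of \emph{which} bound applies to \emph{which} intermediate polynomial is airtight: in particular, that after $f \brem g_s$ the $x$-degree really drops to below $m_s$ (so that subsequent costs involve $m_s$, not $d$), that the cost of one arithmetic operation in $\A = \K[x]/\langle g_s\rangle$ is $\softO(m_s)$ rather than something larger, and that in the primary case the truncation and the $x^{m_i}$-reductions are genuinely cost-free. None of these is deep, but the proposition's clean statement hides the need to track these degree drops carefully.
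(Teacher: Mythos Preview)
Your treatment of the general case is correct and matches the paper's proof closely: you track the three steps and obtain the same $\softO(ed + em_s + n_0 m_s + s\delta)$ estimate by the same route.

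The primary case, however, has a genuine gap. You correctly note that $g_s = x^{m_s}$, so the first reduction is a free truncation in $x$. But the second step---division by $g_0$ in $\A[y]$---still costs $\softO(e)$ operations in $\A$, hence $\softO(e\,m_s)$ in $\K$, and you do not explain why this can be bounded by $\softO(\delta m_s)$ when $e$ may be much larger than $\delta$. Your phrase ``since we may assume $f$ has already been truncated'' is the crux, and it is not justified: truncating in $x$ is free, but you give no reason why $f$ can also be truncated in~$y$. The sentence ``or we simply absorb everything into the $\softO(\delta m_s)$ bound below'' assumes exactly what needs to be shown.

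The paper fills this gap with one observation you are missing: because $I$ is $\langle x,y\rangle$-primary of degree $\delta$, every monomial of $y$-degree at least $\delta$ lies in $I$ (indeed $\langle x,y\rangle^\delta \subseteq I$, so $y^\delta \in I$). Hence those terms can be discarded for free before running the algorithm, and one may replace $e$ by $\delta$; likewise $x^{m_s} = g_s \in I$ lets one replace $d$ by $m_s$. Plugging $e \leftarrow \delta$, $d \leftarrow m_s$ into the general bound and using $n_0 \le \delta$ and $s \le m_s$ then gives $\softO(\delta m_s)$.
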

\begin{proof}
  Reducing $f$ modulo $g_s$ takes $\softO(e d)$ operations (and is
  actually free if $d <m_s$). Then, Euclidean division by $g_0$ in
  $\A[y]$ uses $\softO(e)$ steps in $\A$, which is $\softO(e m_s)$
  steps in $\K$. Finally, \cref{prop:red} gives a cost of $\softO(n_0
  m_s + s \delta)$ for the last step.
  
  If $\mathcal G$ generates an $\langle x,y\rangle$-primary ideal, all
  terms of $y$-degree at least $\delta$ vanish through the reduction
  (so we can replace $e$ by $\delta$), as do all terms of $x$-degree
  at least $m_s$ (so we can replace $d$ by $m_s$).
\end{proof}
In the runtime for the general case, $ed$ is the size of the support
of input $f$, and $s\delta$ our bound on the size of $\mathcal{G}$, so
they are essentially unavoidable (unless of course one could avoid
using ${\mathcal G}$ itself but only its Gr\"obner parameters). The
runtime also features the extra terms $e m_s$ and $n_0 m_s$, but
getting rid of them and improving the runtime to $\softO(e d + s
\delta)$ unconditionally seems to be very challenging.

Indeed, consider the {\em modular composition} problem: given $F,G,H$
in $\K[x]$, with $F$ monic of degree $n$ and $G,H$ of degrees less
than $n$, this amounts to computing $G(H) \brem F$. A direct approach
takes quadratic time, and Brent-Kung's baby-steps / giant-steps
algorithm uses $O(n^{1.69})$ operations (and relies on fast matrix
arithmetic). Bringing this down to a quasi-linear runtime has been an
open question since 1978: it is so far known to be feasible only over
finite $\K$~\cite{KeUm11}, with the best algorithm for an arbitrary
$\K$ to date featuring a Las Vegas cost of
$O(n^{1.43})$~\cite{NeSaScVi21}.

It turns out that modular composition is a particular case of the
reduction problem we are considering here. With $F,G,H$ as above, if
we consider ${\mathcal G}=(y-H(x), F(x))$ and the polynomial $f=G(y)$, then
the remainder $f \brem {\mathcal G}$ is precisely $G(H) \brem F$.  Here, we
have $n_0=1$, $s=1$, $m_s=n$, $\delta=n$, $d=1$ and $e=\deg(G,y)+1$,
so that in general $e=n$; on such input, the runtime of our algorithm
is $\softO(n^2)$. Improving our result to $\softO(ed +s\delta)$ would
give a softly linear modular composition algorithm, thus solving a
long-standing open question.

On the other hand, the case where $f$ has large degree in both $x$
and $y$, {\it i.e.} when $m_s \le d$ and $n_0 \le e$, is particularly
favourable, since then the runtime does become $\softO(ed + s \delta)$.
Another favourable situation is when all $e_i$'s are equal to $1$,
since we said before that we have $n_0 m_s \le s\delta$ in this case,
with thus a runtime of $\softO(e d + e m_s + s \delta)$.

Finally, we point out an application of \cref{prop:genred} to modular
multiplication: given $A,B$ in $\K[x,y]_{\iniT}$, where $\iniT$ is the
initial segment determined by ${\mathcal G}$, compute $f=AB \brem {\mathcal G} \in
\K[x,y]_\iniT$.  In this case, we have $d < 2 m_s$ and $e < 2 n_0$, so
the runtime is $\softO(n_0 m_s + s \delta)$; when all $e_i$'s are
equal to $1$, this becomes $\softO(s \delta)$. We are not aware of
previous results for this question.

%%%%%%%%%%%%%%%%%%%%%%%%%%%%%%%%%%%%%%%%%%%%%%%%%%%%%%%%%%%%
%%%%%%%%%%%%%%%%%%%%%%%%%%%%%%%%%%%%%%%%%%%%%%%%%%%%%%%%%%%%
%%%%%%%%%%%%%%%%%%%%%%%%%%%%%%%%%%%%%%%%%%%%%%%%%%%%%%%%%%%%

\section{From Gr\"obner parameters to Gr\"obner basis} \label{sec:conversion}

In this section, we fix a given Gröbner cell (or equivalently, the
monomials $\bm E$). We show how make explicit the mapping $\Phi_{\bm
  E}:\K^N \to \mathcal{C}(\bm E)$, which takes as input Gr\"obner
parameters and outputs the corresponding reduced Gr\"obner basis (see
\cref{ssec:CV}).

First, we fix a way to index the $N$ coefficients of
the polynomials $(\sigma_{i,j})_{0\le i \le s-1, i \le j \le s}$ that
appear in the syzygy~\eqref{eq:syzygy}; this will be done in the
mutually inverse routines given below. Here, for simplicity, we assume
that given the monomials $\bm E$, we can directly access the integers
$s$, $(d_i)_{1\le i \le s}$ and $(e_i)_{1 \le i \le s}$.

\begin{algorithm}[H]
 \algoCaptionLabel{SigmaFromParameters}{{\bm E}, (\lambda_1,\dots,\lambda_N)}
  \begin{algorithmic}[1]
    \Require monomials $\bm E$, $(\lambda_1,\dots,\lambda_N)$ in $\K^N$
    \Ensure polynomials $(\sigma_{i,j})_{0\le i \le s-1, i \le j s}$ in $\K[x,y]$
    \State $k \gets 1$
    \For{$i=0,\dots,s-1$}
    \State $\sigma_{i,i} \gets \sum_{0 \le \ell < d_{i+1}} \lambda_{k+\ell} x^\ell$
    \State $k \gets k + d_{i+1}$
    \For{$j=i+1,\dots,s$}
    \State $\sigma_{i,j} \gets 0$
    \For{$m=0,\dots,e_{j-1}$}
    \State $\sigma_{i,j} \gets \sigma_{i,j} + \sum_{0 \le \ell < d_{i+1}} \lambda_{k+\ell} x^\ell y^m$
    \State $k \gets k + d_{i+1}$
    \EndFor
    \EndFor
    \EndFor
    \State \Return $(\sigma_{i,j})_{0\le i \le s-1, i \le j \le s}$
  \end{algorithmic}
\end{algorithm}

\begin{algorithm}[H]
 \algoCaptionLabel{ParametersFromSigma}{{\bm E}, (\sigma_{i,j})_{i,j}}
  \begin{algorithmic}[1]
    \Require monomials $\bm E$, polynomials $(\sigma_{i,j})_{i,j}$ in $\K[x,y]$
    \Ensure $(\lambda_1,\dots,\lambda_N)$ in $\K^N$
    \State $k \gets 1$
    \For{$i=0,\dots,s-1$}
    \State \InlineFor{$\ell=0,\dots,d_{i+1}-1$}{$\lambda_{k+\ell} \gets \coeff(\sigma_{i,i},x^\ell)$}
    \State $k \gets k + d_{i+1}$
    \For{$j=i+1,\dots,s$}
    \State $\sigma_{i,j} \gets 0$
    \For{$m=0,\dots,e_{j-1}$}
    \State \InlineFor{$\ell=0,\dots,d_{i+1}-1$}{$\lambda_{k+\ell} \gets \coeff(\sigma_{i,j},x^\ell y^m)$}
    \State $k \gets k + d_{i+1}$
    \EndFor
    \EndFor
    \EndFor
    \State \Return $(\lambda_1,\dots,\lambda_N)$
  \end{algorithmic}
\end{algorithm}
To deal with the particular case of punctual Gr\"obner parameters, a
few obvious modifications are needed, such as setting
$\sigma_{0,0},\dots,\sigma_{s-1,s-1}$ to zero and ensuring that $x$ divides
$\sigma_{1,0},\dots,\sigma_{s,s-1}$ in \nameref{algo:SigmaFromParameters}. We call
\textsc{SigmaFromPunctualParameters} and \textsc{PunctualParametersFromSigma}
the resulting procedures.

\medskip

We can now give an algorithm called
\nameref{algo:ReducedBasisFromParameters}, which describes the mapping
$\Phi_{\bm E}:\K^N \to \mathcal{C}(\bm E)$. This procedure is rather
straightforward; the algorithm for the inverse operation, called
\nameref{algo:ParametersFromReducedBasis}, is slightly more involved,
and is described in the next section. We still use the notation of
\cref{ssec:CV}, writing in particular $M_i \in \K[x]$ for the
polynomial coefficient of $y^{n_i}$ in both $g_i$ and $h_i$, for all
$i$, and $m_i$ for its degree.

We compute the $h_i$'s, and then the $g_i$'s, in descending order.  To
obtain the former, we simply use \cref{eq:syzygy}. For any
$i=s-1,\dots,0$, assuming we know $h_i$ and $g_{i+1},\dots,g_s$, let
us show how to obtain $g_i$ by reducing $h_i$ (for $i=s$, we have
$g_s=h_s$), using procedure \nameref{algo:Reduction} from the previous
section.

Using Euclidean division with respect to $x$, the polynomial $h_i$ can
be written as $h_i = A_i M_{i+1} + B_i$, with $A_i$ and $B_i$ in
$\K[x,y]$ and $\deg(B_i,x)  < m_{i+1}$.

Recall now that all polynomials $g_{i+1},\dots,g_s$ are multiples of
$M_{i+1}$, and that the family
$\mathcal{G}_i=(g_{i+1}/M_{i+1},\dots,g_s/M_{i+1})$ is a
zero-dimensional Gr\"obner basis (as pointed out after
\cref{eq:membership}).  Set $\bar h_i = (A_i \brem
\mathcal{G}_i)M_{i+1} + B_i$; we claim that $\bar h_i = g_i$. First,
we determine its initial term: all monomials in $A_i \brem {\mathcal
  G}_i$, and thus in $(A_i \brem \mathcal{G}_i)M_{i+1}$, have $y$-degree
less than $n_{i+1}$, whereas $B_i$ contains the initial term $x^{m_i}
y^{n_i}$ of $h_i$. Thus the initial term of $\bar h_i$ is still
$x^{m_i} y^{n_i}$. Next, we verify that $\bar h_i$ is reduced modulo
$g_1,\dots,g_{i-1},g_{i+1},\dots,g_s$.
\begin{itemize}
\item None of $g_1,\dots,g_{i-1}$ can reduce any term in $\bar h_i$,
  since this polynomial has $y$-degree $n_i$.
\item Since $A_i \brem {\mathcal G}_i$ is reduced modulo ${\mathcal
  G}_i$, $(A_i \brem {\mathcal G}_i)M_{i+1}$ is reduced modulo
  $g_{i+1},\dots,g_s$.
\item Since $B_i$ has $x$-degree less than $m_{i+1}$, it is also
  reduced modulo $g_{i+1},\dots,g_s$.
\end{itemize}
The last observation is that the difference $\bar h_i-h_i$ is in the
ideal $\langle g_{i+1},\dots,g_s \rangle$. Altogether, this
establishes $\bar h_i=g_i$.

With this, we can give our algorithm to compute $g_0,\dots,g_s$.  For
the reduction of the bivariate polynomial $A_i$ modulo ${\mathcal
  G}_i$, we use our procedure \nameref{algo:Reduction}. Note that the
degree assumptions for that procedure are satisfied: the polynomial
$A_i$ has $x$-degree less than $m_s-m_{i+1}$ and $y$-degree less than
$n_{i+1}$, which are precisely the maximal $x$-degrees and $y$-degrees
of the elements in ${\mathcal G}_i$.

As before, we assume that given $\bm E$, we can directly access the
integers $s$, $(d_i)_{1\le i \le s}$ and $(e_i)_{1 \le i \le s}$
and use them freely in the pseudo-code.

\begin{algorithm}[H]
 \algoCaptionLabel{ReducedBasisFromParameters}{{\bm E}, (\lambda_1,\dots,\lambda_N)}
 \begin{algorithmic}[1]
   \Require monomials $\bm E$, $(\lambda_1,\dots,\lambda_N)$ in $\K^N$
   \Ensure the reduced Gr\"obner basis of $\Phi_{\bm E}(\lambda_1,\dots,\lambda_N)$
   \State $(\sigma_{i,j})_{i,j} \gets \Call{algo:SigmaFromParameters}{\bm E, (\lambda_1,\dots,\lambda_N)}$ \label{RBFP1}
   \State $M_0 \gets 1$
   \State \InlineFor {$i=1,\dots,s$} $M_i \gets (x^{d_i}-\sigma_{i-1,i-1}) M_{i-1}$
   \State $h_s \gets M_s$; $g_s \gets M_s$
   \For {$i=0,\dots,s-1$}
   \State $T_i \gets  \textsc{KroneckerMultiply}(y^{e_{i+1}}, h_{i+1})+ \cdots +\textsc{KroneckerMultiply}(\sigma_{s,i}, h_s)$
   \State $h_i \gets T_i \bdiv    ( x^{d_{i+1}}  - \sigma_{i,i})$
   \State ${\mathcal G}_i \gets (g_{i+1} \bdiv M_{i+1},\dots,g_s \bdiv M_{i+1})$
   \State $A_i,B_i \gets h_i \bdiv M_{i+1}, h_i \brem M_{i+1}$
   \State $\bar A_i \gets \Call{algo:Reduction}{A_i, {\mathcal G}_i}$
   \State $g_i \gets \bar A_i M_{i+1} + B_i$
   \EndFor
   \State \Return $(g_0,\dots,g_s)$
  \end{algorithmic}
\end{algorithm}

\begin{proposition}\label{prop:redfromP}
  Given monomials $\bm E$ and $(\lambda_1,\dots,\lambda_N)$ in $\K$,
  \Call{algo:ReducedBasisFromParameters}{{\bm E},
    (\lambda_1,\dots,\lambda_N)} returns the reduced Gr\"obner basis
  of $\Phi_{\bm E}(\lambda_1,\dots,\lambda_N)$ using $\softO(s^2n_0
  m_s)$ operations $(+,\times)$ in $\K$.
\end{proposition}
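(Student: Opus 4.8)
The plan is to handle correctness first, which is mostly a matter of assembling the statements established in the discussion that precedes the algorithm, and then to go through the algorithm step by step for the cost.

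For correctness, I would start from the observation that \nameref{algo:SigmaFromParameters} by construction returns polynomials $(\sigma_{i,j})$ meeting the degree constraints of \cref{ssec:CV}, so by Conca and Valla's theorem the polynomials $h_0,\dots,h_s$ determined by \cref{eq:syzygy} form a minimal Gr\"obner basis of $I:=\Phi_{\bm E}(\lambda_1,\dots,\lambda_N)$, with $h_i$ having initial term $x^{m_i}y^{n_i}$ and polynomial coefficient of $y^{n_i}$ equal to $(x^{d_1}-\sigma_{0,0})\cdots(x^{d_i}-\sigma_{i-1,i-1})$. I would then check by downward induction on $i$ that the loop computes exactly these $h_i$ and the reduced basis elements $g_i$: rewriting \cref{eq:syzygy} as \cref{eq:syzygy2} shows that $x^{d_{i+1}}-\sigma_{i,i}$ divides $T_i$ exactly, with quotient $h_i$ (so the division at that line is legitimate), and that the quantity $M_i$ computed at the start of the algorithm is indeed the polynomial coefficient of $y^{n_i}$ in $h_i$, so matches the $M_i$ of \cref{ssec:structTh}. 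For the passage from $h_i$ to $g_i$, the discussion before the algorithm shows that $\bar h_i:=\bar A_iM_{i+1}+B_i$ — obtained from the Euclidean division $h_i=A_iM_{i+1}+B_i$ by replacing $A_i$ with $\bar A_i=A_i\brem\mathcal G_i$ — has initial term $x^{m_i}y^{n_i}$, is reduced modulo $g_1,\dots,g_{i-1},g_{i+1},\dots,g_s$, and differs from $h_i$ by an element of $\langle g_{i+1},\dots,g_s\rangle$, hence equals the $i$th element of the reduced Gr\"obner basis of $I$; the base case $g_s=h_s=M_s$ is immediate. Finally, the call to \nameref{algo:Reduction} on $(A_i,\mathcal G_i)$ is admissible because $\mathcal G_i$ is the Gr\"obner basis whose initial segment is the translate $\iniT_{\leftarrow i+1}$, of width $m_s-m_{i+1}$ and height $n_{i+1}$, and $A_i$ has $x$-degree less than $m_s-m_{i+1}$ and $y$-degree less than $n_{i+1}$.

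For the cost, I would bound the work attached to each index $i$ of the main loop by $\softO(s\,n_0m_s)$. The preliminary call to \nameref{algo:SigmaFromParameters} uses no $(+,\times)$ operations, and computing $M_1,\dots,M_s$ through the stated products costs $\softO(m_1+\cdots+m_s)=\softO(s\,m_s)$. Within the loop, each $h_j$ has $x$-degree at most $m_s$ and $y$-degree $n_j\le n_0$, and each $\sigma_{j,i}$ has $x$-degree less than $d_{i+1}\le m_s$ and $y$-degree less than $e_j$; hence a single product $\textsc{KroneckerMultiply}(\sigma_{j,i},h_j)$, multiplying two polynomials whose common bounding box has width $O(m_s)$ and height $e_j+n_j=n_{j-1}\le n_0$, costs $\softO(m_s\,n_{j-1})\subseteq\softO(m_sn_0)$ by Kronecker substitution (cf.\ \cref{prop:mulTT}), so forming $T_i$ (a sum of at most $s$ such products, plus a free shift by $y^{e_{i+1}}$) costs $\softO(s\,m_sn_0)$. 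The exact division $T_i\bdiv(x^{d_{i+1}}-\sigma_{i,i})$, done coefficient-wise in $y$, amounts to $O(n_0)$ univariate divisions of degree-$O(m_s)$ polynomials by a monic degree-$d_{i+1}$ polynomial, i.e.\ $\softO(n_0m_s)$; similarly $\mathcal G_i$ is obtained by at most $s$ coefficient-wise divisions by the monic polynomial $M_{i+1}$ for $\softO(s\,n_0m_s)$, the division giving $(A_i,B_i)$ costs $\softO(n_0m_s)$, and the product $\bar A_iM_{i+1}$ costs $\softO(n_0m_s)$. Lastly, \cref{prop:red} bounds $\textsc{Reduction}(A_i,\mathcal G_i)$ by $\softO(\tilde n_0\tilde m_s+\tilde s\,\tilde\delta)$ where $\tilde n_0=n_{i+1}\le n_0$, $\tilde m_s=m_s-m_{i+1}\le m_s$, $\tilde s\le s$ and $\tilde\delta=\delta_{i+1}\le\delta\le n_0m_s$, i.e.\ $\softO(s\,n_0m_s)$. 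Summing the per-index bound $\softO(s\,n_0m_s)$ over $i=0,\dots,s-1$ and adding $\softO(s\,m_s)$ for the $M_i$'s yields $\softO(s^2n_0m_s)$.

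The arguments are essentially bookkeeping; the one point that is genuinely needed and not entirely obvious — but which is already settled in the text — is that $A_i$ has $y$-degree strictly less than $n_{i+1}$ (and $x$-degree strictly less than $m_s-m_{i+1}$), so that the recursive call to \nameref{algo:Reduction} meets its degree hypotheses and the cited cost from \cref{prop:red} applies. The remaining care is in using the crude inequalities $s\le m_s$ and $\delta\le n_0m_s$ to absorb the cost of forming the $T_i$'s, of recomputing $\mathcal G_i$ for each $i$, and of the $s$ reductions, all into the target $\softO(s^2n_0m_s)$.
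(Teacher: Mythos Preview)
Your proposal is correct and follows essentially the same approach as the paper's own proof: correctness is deferred to the discussion preceding the algorithm, and the cost analysis bounds each pass of the main loop by $\softO(s\,n_0 m_s)$ via the same per-step estimates (computing $T_i$, the divisions giving $h_i$, $\mathcal G_i$, $A_i,B_i$, the call to \nameref{algo:Reduction} via \cref{prop:red}, and the final product), summing to $\softO(s^2 n_0 m_s)$. Your intermediate bounds differ only cosmetically (e.g.\ $\softO(s m_s)$ versus the paper's $\softO(s\delta)$ for the $M_i$'s, or $\softO(s\,n_0 m_s)$ versus $\softO(s\delta)$ for building $\mathcal G_i$), and your more explicit handling of the degree hypotheses for $A_i$ matches what the paper asserts just before the pseudo-code.
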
 
\begin{proof}
  Correctness follows from the previous discussion. Regarding the
  runtime, the first step does no arithmetic operation, and computing
  each polynomial $M_i$ takes $\softO(\delta)$ operations, for a total
  of $\softO(s\delta)$.

  For a given index $i$, computing $T_i$ involves at most $s$
  polynomial multiplications, each of which uses $\softO(n_0 m_s)$
  operations $(+,\times)$ in $\K$; we can deduce $h_i$ in the same
  asymptotic time. The Euclidean divisions needed to compute
  ${{\mathcal G}}_i$ cost $\softO(s\delta)$ operations (since all
  polynomials in ${\mathcal G}$ are supported on an initial segment of
  size $\delta$), and the one for $A_i$ and $B_i$ costs
  $\softO(n_0m_s)$, for the same reason. \cref{prop:red} shows that we
  compute $\bar A_i$ in $\softO(n_0 m_s + s \delta)$ operations
  $(+,\times)$. Finally, the product and sum giving $g_i$ take
  $\softO(n_0 m_s)$ operations $(+,\times)$ as well.

  Altogether, the cost at step $i$ is $\softO(sn_0 m_s + s\delta)$,
  which is $\softO(s n_0 m_s)$, and the overall runtime estimate
  follows.
\end{proof}
It will be useful to note that the algorithm does not perform
divisions, so if the input parameters lie in a ring $\A \subset \K$,
the output polynomials ${\mathcal G}$ all have coefficients in $\A$.

The whole procedure can be adapted to deal with punctual Gr\"obner
cells in a straightforward manner, by using
\textsc{SigmaFromPunctualParameters} at \cref{RBFP1}. The resulting
function is called \textsc{ReducedBasisFromPunctualParameters}, and
features a similar runtime.

%%%%%%%%%%%%%%%%%%%%%%%%%%%%%%%%%%%%%%%%%%%%%%%%%%%%%%%%%%%%
%%%%%%%%%%%%%%%%%%%%%%%%%%%%%%%%%%%%%%%%%%%%%%%%%%%%%%%%%%%%
%%%%%%%%%%%%%%%%%%%%%%%%%%%%%%%%%%%%%%%%%%%%%%%%%%%%%%%%%%%%

\section{Computing the Gr\"obner parameters}\label{sec:inverse conversion}

We can now give our algorithms to compute the Gr\"obner parameters of
a zero-dimensional ideal $I$.

We do this in two different contexts. The first situation is the
recovery of these parameters starting from the reduced Gr\"obner basis
of $I$ ({\it i.e.}, computing the map $\Phi_{\bm E}^{-1}$ defined in
the previous sections). This is relatively straightforward, using a
sequence of Euclidean divisions.

The second variant we present is the core ingredient of our main
algorithm. Here, we consider an ideal $J$ given by generators
$f_1,\dots,f_t$, a zero-dimensional ideal $I$ containing $J$, and we
describe a system of polynomials which admits the Gr\"obner parameters
of $I$ as a solution with multiplicity one. In that, we follow
previous work of Hauenstein, Mourrain, Szanto~\cite{HaMoSz17} that was
in the context of border bases representations.

These latter equations are in general too complex to be dealt with
directly. In the next section, we will use them to describe our main
algorithm, a version of Newton iteration to compute the Gr\"obner
parameters of $I$ as above.

%%%%%%%%%%%%%%%%%%%%%%%%%%%%%%%%%%%%%%%%%%%%%%%%%%%%%%%%%%%%

\subsection{Starting from a reduced basis}\label{ssec:fromGB}

In this subsection, we assume that we are given the reduced Gr\"obner
basis ${\mathcal G}=(g_0,\dots,g_s)$ of a zero-dimensional ideal $I$, and we
show how to compute its Gr\"obner parameters. We also indicate how the
procedure simplifies slightly when $I$ is $\langle
x,y\rangle$-primary.

Our notation is as before: the initial terms of the polynomials
$(g_0,\dots,g_s)$ are written $\bm E= (y^{n_0}, x^{m_1} y^{n_1},
\dots, x^{m_{s-1}} y^{n_{s-1}} , x^{m_s})$, the degree of ${\mathcal G}$ is
$\delta$ and $N=\delta+m_s$ is the number of Gr\"obner parameters. In
what follows, we compute the polynomials $(\sigma_{i,j})_{i,j}$ appearing
in the syzygies~\eqref{eq:syzygy}, whose coefficients are the
Gr\"obner parameters of $I$. Recall that we write $D_i = x^{d_i} -
\sigma_{i-1,i-1}$ for $i=1,\dots,s$, and $M_i=(x^{d_1}-\sigma_{0,0})\cdots
(x^{d_i}-\sigma_{i-1,i-1})$ for $i=0,\dots,s$, with the empty product being
equal to $1$.

\paragraph{Deriving the algorithm}
Knowing the reduced Gr\"obner basis ${\mathcal G}=(g_0,\dots,g_s)$, some of
the polynomials $(\sigma_{i,j})$ are easy to compute: for $i=1,\dots,s$, we
saw in the previous section that the polynomial coefficient of
$y^{n_i}$ in $g_i$ is none other than $M_i$. Knowing $M_1,\dots,M_s$
gives us $D_1,\dots,D_s$, and thus $\sigma_{0,0},\dots,\sigma_{s-1,s-1}$, by
successive divisions.

Let now $h_0,\dots,h_s$ be the non-reduced Gr\"obner basis already
used previously, that satisfies \cref{eq:syzygy}, and recall that for
$i=0,\dots,s$, $M_i$ divides $h_i$. We define $H_{i} = h_i/M_i$,
and consider again~\cref{eq:syzygy2}, which is a rewriting of~\eqref{eq:syzygy}:
$$D_{i+1} h_i - y^{e_{i+1}} h_{i+1} = \sum_{j=i+1}^s \sigma_{j,i}h_{j},$$
in which both left- and right-hand sides can be divided by $M_{i+1}$.
Carrying out the division, we obtain
\begin{equation}\label{eq:hij}
H_{i} - y^{e_{i+1}} H_{i+1} = \sum_{j=i+1}^s \sigma_{j,i} D_{i+2}\cdots D_j H_{j}.   
\end{equation}
Fix $i$ in $\{0,\dots,s-1\}$, and assume that we have computed
$H_{i+1},\dots,H_s$; we show how to compute
$\sigma_{i+1,i},\dots,\sigma_{s,i}$, and then $H_i$.

By construction, the polynomials $(g_0,\dots,g_i,h_{i+1},\dots,h_s)$
also form a minimal Gr\"obner basis of $I$. The polynomial $h_i-g_i$
is in $I$, so it reduces to zero through division by these
polynomials. Since $g_i$ and $h_i$ both have $M_i$ as polynomial coefficient of
$y^{n_i}$, $h_i-g_i$ has degree less than $n_i$ in $y$.  This implies
that the only polynomials in the list that can reduce it are
$h_{i+1},\dots,h_s$. We reduce $h_i-g_i$ by $h_{i+1}$, then $h_{i+2}$,
etc, in this order; for $j=i,\dots,s$, write $R_{i,j}$ for the remainder
obtained after reduction by $h_{i+1},\dots,h_j$, so that
$R_{i,i}=h_i-g_i$.

\begin{lemma}
  For $j=i,\dots,s$, $R_{i,j}$ has $y$-degree less than $n_j$.  
\end{lemma}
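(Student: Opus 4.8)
The plan is to argue by induction on $j$, starting from the base case $j=i$, where $R_{i,i}=h_i-g_i$ has $y$-degree less than $n_i$: indeed, as noted just before the lemma, $g_i$ and $h_i$ share the same polynomial coefficient $M_i$ of $y^{n_i}$, so the difference $h_i - g_i$ has $y$-degree strictly less than $n_i=n_i$, which is the claim for $j=i$. For the inductive step, assume $R_{i,j-1}$ has $y$-degree less than $n_{j-1}$, and recall that $R_{i,j}$ is obtained from $R_{i,j-1}$ by reducing modulo $h_j$, whose initial term is $x^{m_j}y^{n_j}$. I would distinguish the two possibilities for the reduction: either no term of $R_{i,j-1}$ is divisible by $x^{m_j}y^{n_j}$, in which case $R_{i,j}=R_{i,j-1}$ and the bound $n_{j-1}\le n_{j-1}$ together with $n_j \le n_{j-1}$ is not yet enough — so this naive split does not suffice, and one must look more closely at which monomials actually survive.

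The key additional observation is that reduction by $h_j$ only removes monomials $x^uy^v$ with $v \ge n_j$ (those it can cancel) and replaces them, via $-\,(x^uy^v / x^{m_j}y^{n_j})\cdot(h_j - x^{m_j}y^{n_j})$, by monomials of $y$-degree at most $v - n_j + (n_j - 1) = v-1 < v$; combined with the fact that the earlier reductions by $h_{i+1},\dots,h_{j-1}$ have already eliminated all monomials of $y$-degree $\ge n_{j-1}$ from $R_{i,j-1}$ — which is exactly the inductive hypothesis — one gets that after reducing by $h_j$ as well, every monomial of $R_{i,j}$ of $y$-degree in the range $\{n_j,\dots,n_{j-1}-1\}$ has been rewritten to have strictly smaller $y$-degree. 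Hence $R_{i,j}$ has $y$-degree less than $n_j$, completing the induction. The case $j=s$ then gives $R_{i,s}$ of $y$-degree less than $n_s=0$, i.e.\ $R_{i,s}=0$, recovering the expected fact that $h_i-g_i$ reduces to zero.

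The main obstacle I anticipate is being careful about the bookkeeping of $y$-degrees under a single reduction step: one must check that reducing a monomial $x^uy^v$ with $v$ in the critical band by $h_j$ cannot re-introduce a monomial of $y$-degree $\ge n_j$ coming from the tail $h_j - x^{m_j}y^{n_j}$, and this relies precisely on $h_j$ having $y$-degree exactly $n_j$ (so its tail has $y$-degree $\le n_j - 1$) together with the bookkeeping that all of $h_{i+1},\dots,h_j$ have $y$-degrees $n_{i+1} \ge \cdots \ge n_j$. A clean way to package this is to note that throughout the successive reductions, the current remainder stays supported, in $y$-degree, on $\{0,\dots,n_j-1\}$ after the $j$th step — an invariant that is stable because each $h_k$ with $k \le j$ has initial $y$-degree $n_k \ge n_j$ and tail $y$-degree $< n_k$, so no step can push the $y$-degree back up past $n_j$. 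Establishing this invariant is the crux; the rest is immediate.
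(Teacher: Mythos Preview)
Your degree bookkeeping shows correctly that reduction by $h_j$ never increases the $y$-degree, but it does \emph{not} show that all monomials of $y$-degree in $\{n_j,\dots,n_{j-1}-1\}$ disappear. Reduction by $h_j$ can only touch a monomial $x^u y^v$ that is divisible by the initial term $x^{m_j}y^{n_j}$, i.e.\ with $u\ge m_j$ and $v\ge n_j$. A monomial with $u<m_j$ and $n_j\le v<n_{j-1}$ is simply left alone by this step---and, as one checks, it lies in the initial segment $\iniT$, so no $h_k$ whatsoever can reduce it. Your invariant ``no step can push the $y$-degree back up past $n_j$'' is true but beside the point: the issue is not monomials being re-introduced, it is monomials that were never reducible in the first place. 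Nothing in your argument rules these out.

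The paper's proof supplies the missing idea: it uses the global fact that $h_i-g_i$ lies in $I$ and therefore reduces to zero modulo the full basis $\mathcal H$. After reducing by $h_{j+1}$ one writes $R_{i,j+1}=A_{j+1}+B_{j+1}$, where $A_{j+1}$ collects the terms of $y$-degree $\ge n_{j+1}$ (these automatically have $x$-degree $<m_{j+1}$). The point is that $A_{j+1}$ is already in normal form modulo $\mathcal H$; since $R_{i,j+1}$ reduces to zero and the normal form of $B_{j+1}$ has $y$-degree $<n_{j+1}$, support disjointness forces $A_{j+1}=0$. Without invoking $h_i-g_i\in I$, the lemma cannot be proved by local degree-tracking alone.
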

\begin{proof}
  We pointed out that this is true for $j=i$, so we suppose that the
  claim holds for some index $j < s$ and prove it for index $j+1$. To
  obtain $R_{i,j+1}$, we reduce $R_{i,j}$ by $h_{j+1}$, which has initial
  term $x^{m_{j+1}} y^{n_{j+1}}$, so that we can write $R_{i,j+1} =
  A_{j+1} + B_{j+1}$, with $\deg(B_{j+1},y) < n_{j+1}$,
  $\deg(A_{j+1},x) < m_{j+1}$ and all terms in $A_{j+1}$ having
  $y$-degree at least $n_{j+1}$. To conclude, we prove that
  $A_{j+1}=0$.

  Since we use the lexicographic order $y\succ x$, reduction of a term by
  $h_{j+1}$ does not increase its $y$-degree; since $R_{i,j}$ had
  $y$-degree less than $n_j$ by assumption, it is also the case for
  $A_{j+1}$. In particular, $A_{j+1}$ is reduced modulo $\mathcal{H}$.
  Since $R_{i,j}$ reduces to zero modulo $\mathcal{H}$, it follows that $A_{j+1}
  + (B_{j+1} \brem \mathcal{H}) = 0$. Now, for the same reason as above,
  $(B_{j+1} \brem \mathcal{H})$ has $y$-degree less than $n_{j+1}$, so that
  the supports of $A_{j+1}$ and $(B_{j+1} \brem \mathcal{H})$ do not
  overlap. This implies that $A_{j+1} = (B_{j+1} \brem \mathcal{H}) = 0$, as
  claimed.
\end{proof}

This lemma shows that the reduction of $h_i-g_i$ induces an equality
of the form
$$h_i - g_i = \sum_{j=i+1}^s q_{j,i} h_j,$$ for some polynomials
$q_{j,i}$ in $\K[x,y]$ satisfying $\deg( q_{j,i}, y) < n_{j-1}-n_{j} = e_{j}$ for
all $j$. Equivalently, we may rewrite this as
$$h_i = g_i + \sum_{j=i+1}^s q_{j,i}  {M_j} H_{j},$$ whence,
after dividing by ${M_i}$,
\begin{equation}\label{eq:hgD}
H_{i} = G_i + \sum_{j=i+1}^s q_{j,i} D_{i+1}\cdots D_j  H_{j}.  
\end{equation}
%% If we write 
%% $x^{m-j-m_i} = x^{d_{j+1}} x^{m_j-m_{i+1}}$ 
Combining~\eqref{eq:hij} and~\eqref{eq:hgD}, we get
\begin{equation}\label{eq:red}
G_i-y^{e_{i+1}} H_{i+1} = \sum_{j=i+1}^s Q_{j,i} H_{j},
\quad\text{with}\quad Q_{j,i}= (\sigma_{j,i} -q_{j,i}
{D_{i+1}})  D_{i+2}\cdots D_j.
\end{equation}
Notice in particular that for all $j$,
we have $\deg(Q_{j,i},y) < e_j$ and thus $\deg(Q_{j,i} H_j, y) <
n_{j-1}$.  

In this paragraph, for $F$ in $\K[x,y]$, we write $\bar F$ for its
residue class in $\B[y]$, with $\B=\K[x]/\langle D_{i+1} \cdots D_s\rangle$.
Take $j$ in $i+1,\dots,s-1$ and suppose that we know $\bar
Q_{i+1,i},\dots,\bar Q_{j-1,i}$. Split the sum in~\eqref{eq:red} as
$A= Q_{j,i}H_j+R$ with
$$A= G_i-y^{e_{i+1}} H_{i+1} - \sum_{k=i+1}^{j-1} Q_{k,i}H_k
\quad\text{and}\quad R= \sum_{k=j+1}^s Q_{k,i} H_{k}.$$ Over $\B[y]$,
$\bar R$ has degree (in $y$) less than $n_j$; since $\bar H_j$ is
monic of degree $n_j$, the relation $\bar A= \bar Q_{j,i}\bar H_j+\bar
R$ describes the Euclidean division of $\bar A$, which is known, by
$\bar H_j$, which is known as well. If we let $Q_{i,j}^*$ be the
canonical lift of $\bar Q_{i,j}$ to $\K[x,y]$, we obtain
\begin{align*}
Q_{j,i}^*&=Q_{j,i} \brem D_{i+1} \cdots D_s  \\
&=(\sigma_{j,i} -q_{j,i} {D_{i+1}}) D_{i+2}\cdots D_j  \brem D_{i+1} \cdots D_s.
\end{align*}
It follows that $Q_{i,j}^*$ is divisible by $D_{i+2}\cdots D_j$, and
that $$Q_{i,j}^*\bdiv (D_{i+2} \cdots D_j) = (\sigma_{j,i} -q_{j,i} {D_{i+1}})
\brem D_{i+1} D_{j+1} \cdots D_s .$$  Since $\deg(\sigma_{j,i}, x)<d_{i+1}$,
reducing this modulo $D_{i+1}$ finally gives us $\sigma_{j,i}$. Noticing
also that the remainder $\bar R$ gives us the next value of $\bar A$,
we obtain \algoName{algo:ParametersFromReducedBasis}.

In the following proposition, in preparation for the discussion in the
next subsection, we point out in particular that the algorithm does
not perform any division.

%% In preparation for the discussion in the next subsection, we shall
%% write the algorithm over a coefficient ring $\A$, rather than a field
%% as discussed so far.  Accordingly, since we only wish to speak of
%% Gr\"obner bases over a field, we must relax our assumptions in the
%% pseudo-code: we only assume that ${\mathcal G}$ is reduced, that its initial
%% terms are given in decreasing lexicographic order, of the form
%% $(y^{n_0},\dots,x^{m_s})$ as seen so far.

%% The following proposition revisits the algorithm in this case and
%% establishes that its output is still well-defined (this is not {\it a
%%   priori} obvious, since then the algorithm has to perform Euclidean
%% divisions on polynomials with coefficients in a ring).

\begin{proposition}\label{prop:overA}
  Given a minimal reduced Gr\"obner basis ${\mathcal G} =
  (g_{0},\dots,g_{s})$ in $\K[x,y]$,
  $\Call{algo:ParametersFromReducedBasis}{\mathcal G}$ returns the
  Gr\"obner coefficients of $\mathcal G$ using $\softO(s^2 n_0 m_s)$
  operations $(+,\times)$ in $\K$.
\end{proposition}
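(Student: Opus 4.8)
The plan is to deduce correctness directly from the derivation that precedes the statement, then bound the cost iteration by iteration.

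\textbf{Correctness.} First I would check that every step of \algoName{algo:ParametersFromReducedBasis} is well-defined. Reading off $M_i$ as the polynomial coefficient of $y^{n_i}$ in $g_i$ is legitimate by Lazard's theorem, and $M_i$ is monic in $x$, so the Euclidean divisions $D_i = M_i \bdiv M_{i-1}$, hence $\sigma_{i-1,i-1} = x^{d_i}-D_i$ and $G_i = g_i \bdiv M_i$, all make sense. Likewise each $H_j = h_j/M_j$ has $1$ as polynomial coefficient of $y^{n_j}$, so over $\B = \K[x]/\langle D_{i+1}\cdots D_s\rangle$ the residue $\bar H_j$ is monic of degree $n_j$ in $y$, which makes the Euclidean divisions in the inner loop legitimate, and $D_{i+2}\cdots D_j$ divides $\bar Q_{j,i}$ exactly by the shape of $Q_{j,i}$ in \eqref{eq:red}. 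Correctness then follows from the chain of identities set up above: the quotients obtained by successive Euclidean division in $\B[y]$ of $\bar A$ (the residue of $G_i - y^{e_{i+1}}H_{i+1}$ minus the terms $Q_{k,i}H_k$ already peeled off) by $\bar H_j$ are exactly $Q_{j,i} \brem D_{i+1}\cdots D_s$, using the degree bound $\deg(Q_{j,i}H_j,y) < n_{j-1}$ recorded after \eqref{eq:red}; dividing out $D_{i+2}\cdots D_j$ and reducing modulo $D_{i+1}$ then recovers $\sigma_{j,i}$, since $\deg(\sigma_{j,i},x) < d_{i+1} = \deg D_{i+1}$; and $H_i$ is reconstructed from \eqref{eq:hij}. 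The resulting polynomials $(\sigma_{i,j})$ satisfy the degree constraints of \cref{ssec:CV} and the syzygy relations \eqref{eq:syzygy}, so by the uniqueness in Conca and Valla's parametrization they are the Gr\"obner parameters of $I=\langle\mathcal G\rangle$; the final call to \textsc{ParametersFromSigma} merely re-indexes them.

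\textbf{Only $(+,\times)$ operations.} I would then observe that every division performed is either a Euclidean division by, or a reduction modulo, a polynomial that is monic in the relevant variable (the $M_i$, the $D_i$, the products $D_{i+2}\cdots D_j$, all monic in $x$; the $\bar H_j$, monic in $y$ over $\B$), and that reduction modulo $\mathcal H$ only ever rescales by leading coefficients equal to $1$. Hence no inversion in $\K$ is needed; this is the property that \cref{sec:inverse conversion} invokes for lifting over a ring.

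\textbf{Cost.} The preprocessing (the $M_i$, then the divisions producing $D_1,\dots,D_s$, $\sigma_{0,0},\dots,\sigma_{s-1,s-1}$ and $G_0,\dots,G_s$) costs $\softO(s n_0 m_s)$, each of the $s$ divisions being quasi-linear in the support of a polynomial contained in the rectangle $\{0,\dots,m_s-1\}\times\{0,\dots,n_0-1\}$. The main loop has $s$ iterations. In iteration $i$: forming $\bar A$ costs $\softO(n_0 m_s)$; the inner loop over $j = i+1,\dots,s$ has at most $s$ steps, each consisting of one Euclidean division in $\B[y]$ of a polynomial of $y$-degree $< n_0$ by a monic polynomial, over a $\K$-algebra $\B$ of dimension $\le m_s$, together with one exact division by $D_{i+2}\cdots D_j$ and one reduction modulo $D_{i+1}$, all of which fit in $\softO(n_0 m_s)$ operations $(+,\times)$; and reconstructing $H_i$ via \eqref{eq:hij} is a sum of at most $s$ products $\sigma_{j,i}\,(D_{i+2}\cdots D_j)\,H_j$, each with all relevant supports contained in a rectangle of area $O(n_0 m_s)$, hence computable in $\softO(n_0 m_s)$ by Kronecker substitution. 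So iteration $i$ costs $\softO(s n_0 m_s)$, and the grand total is $\softO(s^2 n_0 m_s)$, which absorbs the preprocessing.

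\textbf{Expected main obstacle.} The mathematics is essentially done in the derivation; the real work is bookkeeping, namely verifying that the successive residues $\bar A$ in the inner loop keep $y$-degree $< n_0$ and bounded representatives in $\B$, that all the divisions by $D_{i+2}\cdots D_j$ are exact, and that the degree constraints on the $\sigma_{j,i}$ lose no information under reduction modulo $D_{i+1}$ — i.e., making the informal derivation fully rigorous while certifying that no single operation exceeds the $\softO(n_0 m_s)$ budget.
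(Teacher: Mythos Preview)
Your proposal is correct and follows essentially the same route as the paper's proof: correctness is deferred to the derivation preceding the algorithm, the $(+,\times)$-only claim is justified by noting every division is by a monic polynomial, and the cost is bounded by observing that the doubly nested loop has $O(s^2)$ iterations, each costing $\softO(n_0 m_s)$. The paper is a bit tighter on the preprocessing ($\softO(s\delta)$ rather than your $\softO(s n_0 m_s)$) and records the explicit degree bound $\deg(H_i,x) < d_{i+1}+\cdots+d_s$, but these are refinements within the same argument, not a different approach.
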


\begin{algorithm}[H]
 \algoCaptionLabel{ParametersFromReducedBasis}{{\mathcal G}}
  \begin{algorithmic}[1]
    \Require ${\mathcal G} = (g_0,\dots,g_s)$ in $\K[x,y]^s$
    \Assumptions ${\mathcal G}$ is a minimal reduced Gr\"obner basis, with initial terms $(y^{n_0},\dots,x^{m_s})$ listed in decreasing order
    \Ensure $(\lambda_1,\dots,\lambda_N)$ in $\K^N$
    \State \InlineFor{$i=0,\dots,s$} $x^{m_i}y^{n_i} \gets \textsc{InitialTerm}(g_i)$
    \State $M_0 \gets 1$, $G_0 \gets g_0$
    \For{$i=1,\dots,s$} \label{step:getMDg}
    \State $M_i \gets \textsc{PolynomialCoefficient}(g_i,y^{n_i})$\label{step:Mi} \Comment{$M_i$ monic in $\K[x]$}
    \State $G_i \gets g_i \bdiv M_i$
    \State $D_i \gets M_{i} \bdiv M_{i-1}$ \Comment{$D_i$ monic in $\K[x]$}
    \State $n_{i-1,i-1} \gets x^{d_i}-D_i$ \Comment{$d_i=m_i-m_{i-1}$}
    \EndFor
    \State $H_s \gets 1$
    \For{$i=s-1,\dots,0$}\label{forI}
    \State $H_i \gets y^{e_{i+1}}H_{i+1}$\label{initHi} \Comment{$e_{i+1} = n_{i}-n_{i+1}$}
    \State $\bar A \gets \bar G_{i}-y^{e_{i+1}}\bar H_{i+1}$\label{step:redhi}
    \Comment {computation done in $\B[y]$, with $\B=\K[x]/\langle D_{i+1} \cdots D_s\rangle$}
    \For{$j=i+1,\dots,s$}\label{forJ}
    \State $\bar Q_{j,i} \gets \bar A \bdiv \bar H_j$, $\bar A \gets \bar A \brem \bar H_j$\label{updatebarA}
    \Comment {Euclidean division done in $\B[y]$}\label{step:eucdivA}
    \State $Q^*_{j,i} \gets $ canonical lift of $\bar Q_{j,i}$ to $\K[x,y]$
    \State $\sigma_{j,i} \gets (Q^*_{j,i} \bdiv D_{i+2} \cdots D_j) \brem D_{i+1}$ \label{step:nji}
    \State $H_i \gets H_i + \textsc{KroneckerMultiply}(\sigma_{j,i}, D_{i+2} \cdots D_j  H_j)$\label{step:updatehi}
    \EndFor
    \EndFor
    \State \Return $\Call{algo:ParametersFromSigma}{(y^{n_0},\dots,x^{m_s}), (\sigma_{j,i})_{0\le i \le s-1, i \le j \le s}}$
  \end{algorithmic}
\end{algorithm}

As before, the modifications needed to deal with the punctual
Gr\"obner cell are elementary; it suffices to invoke
\textsc{PunctualParametersFromSigma} at the last step.  The resulting
procedure will be written \textsc{PunctualParametersFromReducedBasis}.
Before proving the proposition, we give an example of computation
of punctual Gr\"obner coefficients.

\begin{example}
Given ${\mathcal G}$ as in the introduction from \autoref{ex:runningex},
$$\begin{array}{l}
  y^4 + \frac{17}{14}xy - \frac{17}{7}x^2, \\[1mm]
   x y^3 - \frac{10}{9}x^3,\\[1mm]
  x^2 y - 2x^3,\\[1mm]
  x^4,
\end{array}$$
Algorithm \textsc{PunctualParametersFromReducedBasis} computes
\begin{align*}
\sigma_{0,0}=\sigma_{1,0} =0, \quad \sigma_{2,0} = \frac{17}{14},\quad \sigma_{3,0}=\frac{40}9,\\[1mm]
\quad \sigma_{1,1}=\sigma_{2,1} =0, \quad \sigma_{3,1}=-\frac{10}9, \\[1mm]
\sigma_{2,2}=0, \quad \sigma_{3,2}=-2x
\end{align*}
and thus
\begin{align}\label{eq:valueslambda}
\lambda_1 = 0, \quad \lambda_2 =\frac{17}{14}, \quad \lambda_3 =\frac{40}9,\quad \lambda_4 =-\frac{10}9, \quad \lambda_5 = -2. 
\end{align}
\end{example}

%% , but in the next subsection we
%% will need to rely on its properties for more general input, with
%% coefficients in a ring rather than a field, and thus without the
%% Gr\"obner basis assumption (we simply require that ${\mathcal G}$ be
%% reduced). 

\begin{proof}
  We already established correctness of the algorithm. By inspection,
  we see that all steps involve only additions and multiplications in
  $\K$, using only integer constants, since all that is done are
  multiplications or Euclidean divisions by monic polynomials, either
  in $\K[x]$ or in $\B[y]$, with $\B$ of the form $\K[x]/\langle
  D_{i+1} \cdots D_s\rangle$ (this in turn reduces to additions and
  multiplications in $\K$).

  It remains to establish the runtime of the algorithm. Each pass in
  the loop at \cref{step:getMDg} uses $\softO(\delta)$ operations
  $(+,\times)$, for a total of $\softO(s\delta)$. To continue the
  analysis, we first note that for all $i$, the polynomial $H_i$
  computed by the algorithm has $x$-degree less than $ 
  d_{i+1}+\cdots+d_s$, which is less than $m_s$, and $y$-degree
  $n_i$. The same bounds holds for $\deg(Q^*_{j,i},x)$ (by
  construction); the $y$-degree of this polynomial is less than $e_j$,
  as mentioned during the derivation of the algorithm.

  Since $G_i$ satisfies the same degree bound $\deg(G_i,x) <
  d_{i+1}+\cdots+d_s$ as $H_{i}$, the reduction of
  $G_i-y^{e_{i+1}}H_{i+1}$ modulo $D_{i+1}\cdots D_s$ at is free.  At
  each pass through \cref{step:eucdivA}, the Euclidean division takes
  $\softO(n_{j-1}) \subset \softO(n_0)$ operations $(+,\times)$ in
  $\B$, which is $\softO(n_0 m_s)$ operations $(+,\times)$ in
  $\K$. The degree bounds given above show that the cost of computing
  $\sigma_{j,i}$ and updating $H_i$ admits the same upper bound
  $\softO(n_0 m_s)$. Since we enter the inner \textsc{For} loop at
  \cref{forJ} $O(s^2)$ times, this gives a total cost $\softO(s^2 n_0
  m_s)$.
\end{proof}

Let us now see how to formalize the observation that the coefficients
computed by \algoName{algo:ParametersFromReducedBasis} are polynomial
expressions of the coefficients of ${\mathcal G}$.

Assume that the terms $\bm E$ are fixed, let $\mu_1,\dots,\mu_\delta$
be the monomials not in $\langle \bm E \rangle$, ordered in an
arbitrary fashion, and let $\Gamma_{0,1},\dots,\Gamma_{s,\delta}$ be
$(s+1)\delta$ new variables over $\Z$. We set $\A_{\bm
  E}=\Z[\Gamma_{0,1},\dots,\Gamma_{s,\delta}]$.

Because the algorithm only performs additions and multiplications, and
uses constants from the image of $\Z$ in $\K$, we deduce that there
exist $P_{1,\bm E},\dots,P_{N,\bm E}$ in $\A_{\bm
  E}=\Z[\Gamma_{0,1},\dots,\Gamma_{s,\delta}]$ such that given 
  any reduced Gr\"obner basis ${\mathcal G} = (g_0,\dots,g_s)$ with
initial terms $\bm E$ and with coefficients in $\K$ (or any extension
of it, as we choose below), the Gr\"obner parameters of ${\mathcal G}$ are
obtained by evaluating $P_{1,\bm E},\dots,P_{N,\bm E}$ at the
coefficients of ${\mathcal G}$.

Correctness of the algorithm can then be restated as follows.  Let
$\Lambda_{1},\dots,\Lambda_{N}$ be another set of new variables over
$\K$, that stand for ``generic'' Gr\"obner parameters, and define
$\L=\K(\Lambda_{1},\dots,\Lambda_{N})$. Let further
$g_{0,\L},\dots,g_{s,\L}$ be the polynomials obtained as output of
$\Call{algo:ReducedBasisFromParameters}{\bm E,
  (\Lambda_{1},\dots,\Lambda_{N})}$. Since that algorithm as well
performs only additions and subtractions (\cref{prop:redfromP}), 
these polynomials actually have coefficients in
$\K[\Lambda_{1},\dots,\Lambda_{N}]\subset \L$.
For $i=0,\dots,s$ and $j=1,\dots,\delta$, let then $R_{i,j} \in
\K[\Lambda_{1},\dots,\Lambda_{N}]$ be the coefficient of the monomial
$\mu_j$ in $g_{i,\L}$. We deduce from our discussion that $P_{i,\bm
  E}(R_{0,1},\dots,R_{s,\delta}) = \Lambda_i$ holds for all $i$.
We will use this observation in the next subsection.

%%%%%%%%%%%%%%%%%%%%%%%%%%%%%%%%%%%%%%%%%%%%%%%%%%%%%%%%%%%%

\subsection{Polynomial equations for the Gr\"obner parameters}

Let now $f_1,\dots,f_t$ be polynomials in $\K[x,y]$; in this
subsection, those are our inputs, and we denote by $J$ the ideal they
generate in $\K[x,y]$. Let further $I$ be an ideal in $\K[x,y]$ such
that the following properties hold:
\begin{itemize}
\item[${\sf A}_1.$] $I$ has dimension zero;
\item[${\sf A}_2.$] there exists an ideal $I' \subset \K[x,y]$ such
  that $I + I' = \langle 1 \rangle$ and $I I' = J$.
\end{itemize}
Equivalently, $I$ is the intersection (or product) of some
zero-dimensional primary components of $J$. This is for instance the
case if the origin $(0,0)$ is isolated in $V(J)$ and $I$ is the
$\langle x,y\rangle$-primary component of $J$, or if $I=J$ and $V(J)$
is finite.

Let ${\mathcal G} = (g_0,\dots,g_s) \subset \K[x,y]$ be the reduced
lexicographic Gr\"obner basis of $I$. We denote by $\bm E$ the
initial terms of the polynomials in ${\mathcal G}$, written as before as
$$\bm E = (y^{n_0}, x^{m_1} y^{n_1}, \dots, x^{m_{s-1}}
y^{n_{s-1}},x^{m_s}).$$ In what follows, we assume that $\bm E$ is
known, but not ${\mathcal G}$; we show how to recover the Gr\"obner
parameters of $I$ (and thus ${\mathcal G}$ itself).

We let $\delta$ be the degree of $I$, and $\mu_1,\dots,\mu_\delta$ be
the monomials not in $\langle \bm E \rangle$, ordered in an arbitrary
way. Let further $N=\delta+m_s$ be the number of parameters for the
Gr\"obner cell $\mathcal{C}(\bm E)$, and let
$(\lambda_1,\dots,\lambda_N)=\phi_{\bm E}^{-1}(I) \in \K^N$ be the Gr\"obner parameters
associated to $I$. In this subsection, we define a system of $t\delta$
equations $\mathscr{E}_{1,1},\dots, \mathscr{E}_{t,\delta}$ in
$\K[\Lambda_1,\dots,\Lambda_N]$, where $\Lambda_1,\dots,\Lambda_N$ are
new variables, and we prove that $(\lambda_1,\dots,\lambda_N)$ is a
solution of multiplicity $1$ to these equations.

As in the previous subsection, let $\L=\K(\Lambda_1,\dots,\Lambda_N)$
and let $g_{0,\L},\dots,g_{s,\L}$ be the parametric Gr\"obner basis of
$\mathcal{C}(\bm E)$ given by
$\Call{algo:ReducedBasisFromParameters}{\bm E,
  (\Lambda_{1},\dots,\Lambda_{N})}$. Recall that all polynomials
$g_{0,\L},\dots,g_{s,\L}$ have coefficients in
$\K[\Lambda_1,\dots,\Lambda_N]$; this implies in particular that for
$A$ in $\K[x,y]$, the remainder $A \brem \langle
g_{0,\L},\dots,g_{s,\L}\rangle$ is in
$\K[\Lambda_1,\dots,\Lambda_N][x,y]$. For $j=1,\dots,\delta$, we then
denote by $\mathcal{N}_i$ the following $\K$-linear map:
\begin{align*}
  \mathcal{N}_j: \K[x,y] & \to \K[\Lambda_1,\dots,\Lambda_N]\\
      A & \mapsto {\rm coeff}(A \brem \langle g_{0,\L},\dots,g_{s,\L}\rangle, \mu_j),
\end{align*}
with $\mu_1,\dots,\mu_\delta$ the monomials not in $\langle \bm E
\rangle$, as defined above.  For $i=1,\dots,t$, we then let
$$\mathscr{E}_{i,1},\dots,\mathscr{E}_{i,\delta} =
\mathcal{N}_1(f_i),\dots, \mathcal{N}_\delta(f_i),$$ thus defining
$t\delta$ polynomials $\mathscr{E}_{1,1},\dots,\mathscr{E}_{t,\delta}$
in $\K[\Lambda_1,\dots,\Lambda_N]$. The following key property for
these equations was inspired by~\cite[Theorem~4.8]{HaMoSz17}, which
was stated in the context of border bases.

\begin{proposition}\label{prop:mult1}
  $(\lambda_1,\dots,\lambda_N)$ is a solution of
  $\mathscr{E}_{1,1},\dots, \mathscr{E}_{t,\delta}$ of multiplicity
  $1$.
\end{proposition}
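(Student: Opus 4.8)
The plan is to prove the statement by showing that the Jacobian matrix $\mathrm{Jac}=\big(\partial\mathscr{E}_{i,j}/\partial\Lambda_\ell\big)_{(i,j),\,\ell}$ — a $t\delta\times N$ matrix with entries in $\K[\Lambda_1,\dots,\Lambda_N]$ — has full column rank $N$ when evaluated at $\bm\lambda:=(\lambda_1,\dots,\lambda_N)$; since $\bm\lambda$ is a $\K$-rational point, having $\mathrm{Jac}$ of rank $N$ at $\bm\lambda$ is equivalent to $\bm\lambda$ being an isolated solution of multiplicity $1$. That $\bm\lambda$ is a solution is immediate: running \algoName{algo:ReducedBasisFromParameters} on $(\bm E,\bm\lambda)$ returns $\mathcal G$, so the ideal $\langle g_{0,\L},\dots,g_{s,\L}\rangle$ specializes at $\Lambda=\bm\lambda$ to $\langle\mathcal G\rangle=I$; as $f_i\in J\subseteq I$ by ${\sf A}_2$, we get $f_i\brem\langle g_{0,\L},\dots,g_{s,\L}\rangle\equiv 0$ at $\bm\lambda$, hence $\mathscr E_{i,j}(\bm\lambda)=0$ for all $i,j$. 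Note also that, because \algoName{algo:Reduction} performs no divisions other than by monic polynomials, the remainder $r_{i,\L}=f_i\brem\langle g_{k,\L}\rangle_k$ and the corresponding quotients $Q_{k,\L}$ lie in $\K[\Lambda_1,\dots,\Lambda_N][x,y]$, so the $\mathscr E_{i,j}$ are genuine polynomials.

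To analyse the kernel of $\mathrm{Jac}$ at $\bm\lambda$, I would fix $i$ and differentiate the division identity $f_i=\sum_k Q_{k,\L}\,g_{k,\L}+r_{i,\L}$ along a direction $v\in\K^N$, evaluating at $\bm\lambda$; writing a dot for the directional derivative there, and $g_k,Q_k^{(i)},r_i$ for the specializations (so $f_i=\sum_k Q_k^{(i)}g_k$ and $r_i=0$), this gives $\dot r_i=-\sum_k\dot Q_k^{(i)}g_k-\sum_k Q_k^{(i)}\dot g_k$. Let $\pi$ denote the normal form map $A\mapsto A\brem\mathcal G$ onto $\K[x,y]_{\iniT}$; it is $\K[x,y]/I$-linear and kills $I$. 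Since $\dot r_i$ is supported on $\iniT$ while $\sum_k\dot Q_k^{(i)}g_k\in I$, applying $\pi$ yields $\dot r_i=-\pi\!\big(\sum_k Q_k^{(i)}\dot g_k\big)$. Reading off the coefficients on the basis $\mu_1,\dots,\mu_\delta$, the directional derivative of $(\mathscr E_{i,1},\dots,\mathscr E_{i,\delta})$ is the coefficient vector of $-\pi\!\big(\sum_k Q_k^{(i)}\dot g_k\big)$. Consequently, $v$ lies in the kernel of $\mathrm{Jac}$ at $\bm\lambda$ if and only if $\sum_k Q_k^{(i)}\dot g_k\in I$ for $i=1,\dots,t$.

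The next step is to encode the perturbation $(\dot g_0,\dots,\dot g_s)$ as a $\K[x,y]$-linear map $\psi\colon I\to\K[x,y]/I$ with $\psi(g_k)=\dot g_k\bmod I$. This is well defined because $(\dot g_k)_k$ is tangent to the Gr\"obner cell $\mathcal C(\bm E)$: Conca and Valla's parametrization exhibits the ideals of $\mathcal C(\bm E)$ and their syzygies as varying polynomially with the parameters, so differentiating a parametric syzygy $\sum_k S_{k,\L}\,g_{k,\L}=0$ at $\bm\lambda$ gives $\sum_k S_k\dot g_k=-\sum_k\dot S_k g_k\in I$; as these generate the syzygy module of $\mathcal G$ (for a staircase initial ideal the essential syzygies are those between consecutive $g_k$, captured by~\eqref{eq:syzygy} after interreduction), the value $\pi\!\big(\sum_k Q_k\dot g_k\big)$ does not depend on the chosen representation $f=\sum_k Q_k g_k$, and $\psi$ is $\K[x,y]$-linear by the $\K[x,y]/I$-linearity of $\pi$. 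Now if $v$ is in the kernel of $\mathrm{Jac}$ at $\bm\lambda$, then $\psi(f_i)=\pi\!\big(\sum_k Q_k^{(i)}\dot g_k\big)=0$ for all $i$, so $\psi$ vanishes on $J=\langle f_1,\dots,f_t\rangle$. By ${\sf A}_2$ write $1=a+a'$ with $a\in I$, $a'\in I'$; for any $b\in I$ we have $b=ab+a'b$, where $a'b\in I'I=J$ gives $\psi(a'b)=0$, and $\psi(ab)=\bar a\cdot\psi(b)=0$ in $\K[x,y]/I$ since $\bar a=0$. Hence $\psi(b)=0$, so $\psi\equiv 0$, i.e.\ every $\dot g_k$ lies in $I$. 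But $\dot g_k$ is supported on $\iniT$ (the initial term of $g_{k,\L}$ is independent of $\Lambda$ and the basis is reduced), and $I\cap\K[x,y]_{\iniT}=0$, so $\dot g_k=0$ for all $k$. Finally, differentiating the identity $P_{i,\bm E}(R_{0,1},\dots,R_{s,\delta})=\Lambda_i$ from the end of \cref{ssec:fromGB} via the chain rule shows that the differential of $\Phi_{\bm E}$ into coefficient space has a left inverse, hence is injective, so $(\dot g_k)_k=0$ forces $v=0$. Thus $\mathrm{Jac}$ has trivial kernel at $\bm\lambda$, so it has full column rank $N$, and $\bm\lambda$ is a solution of multiplicity $1$.

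The main obstacle I anticipate is the well-definedness of $\psi$ — equivalently, the fact that a first-order deformation of $\mathcal G$ within $\mathcal C(\bm E)$ respects the syzygies of $\mathcal G$ to first order. This is precisely where the explicit Conca--Valla parametrization (and the explicit syzygies~\eqref{eq:syzygy}) enter; a more conceptual alternative is to identify the relevant perturbation space with a piece of the Hilbert-scheme tangent space $\mathrm{Hom}_{\K[x,y]}(I,\K[x,y]/I)$ at $[I]$ and track how the conditions $f_i\in I_\Lambda$ cut it down, but the hands-on argument above keeps the proof close to the combinatorial data already developed in the paper.
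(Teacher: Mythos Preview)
Your argument is correct, but it follows a genuinely different route from the paper's.

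The paper never constructs your map $\psi\colon I\to\K[x,y]/I$. Instead it works with two ideals in $\K[\Lambda_1,\dots,\Lambda_N]$: the ideal $\mathcal I$ generated by all $\mathcal N_j(g_i)$, and the ideal $\mathcal J=\langle\mathscr E_{i,j}\rangle$. Using the same identity $P_{k,\bm E}(R_{0,1},\dots,R_{s,\delta})=\Lambda_k$ that you invoke at the end, the paper shows $\mathcal I=\langle\Lambda_k-\lambda_k\rangle_k$ outright. Then, with $K+K'=1$, $K\in I$, $K'\in I'$, it observes that $g_iK'\in II'=J$, so $\mathcal N_j(g_iK')\in\mathcal J$; since $g_iK\in I^2$ one has $\mathcal N_j(g_iK)\in\mathcal I^2$, and therefore the Jacobian of the elements $\mathcal N_j(g_iK')=\mathcal N_j(g_i)-\mathcal N_j(g_iK)$ at $\bm\lambda$ coincides with that of the $\mathcal N_j(g_i)$, which has trivial kernel because the latter generate the maximal ideal $\mathcal I$.

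The two arguments share the same algebraic core (the $I+I'=\langle1\rangle$, $II'=J$ trick, and the $P_{k,\bm E}$ identity), but the paper's version is shorter precisely because it never needs the step you correctly flagged as delicate: the well-definedness of $\psi$. Your syzygy-lifting justification can be made rigorous here (Buchberger's chain criterion reduces to consecutive pairs, and those syzygies deform polynomially in $\Lambda$ since the reduction algorithm uses only $(+,\times)$), but the paper sidesteps it entirely by exhibiting concrete elements of $\mathcal J$ rather than a module map. What your approach buys is a cleaner conceptual picture: your $\psi$ is an element of $\mathrm{Hom}_{\K[x,y]}(I,\K[x,y]/I)$, so you are literally showing that the tangent vector to the Gr\"obner cell induced by a kernel direction $v$ is the zero deformation, which is closer in spirit to the Hauenstein--Mourrain--Szanto viewpoint the paper cites.
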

\begin{proof}
  Let $\mathcal{I}$ be the ideal generated by all polynomials
  $\mathcal{N}_i(g_j)$, for $i=1,\dots,\delta$ and $j=0,\dots,s$, and
  let $R_{0,1},\dots,R_{s,\delta} \in
  \K[\Lambda_{1},\dots,\Lambda_{N}]$ be the coefficients of
  $(g_{0,\L},\dots,g_{s,\L})$, as in the previous subsection. Then,
  for $i=1,\dots,\delta$ and $j=0,\dots,s$, the polynomial
  $\mathcal{N}_i(g_j)$ is equal to $R_{j,i}(\lambda_1,\dots,\lambda_N) -
  R_{j,i}$. In particular,
  $(\lambda_1,\dots,\lambda_N)$ is in the zero-set of $\mathcal{I}$.

  Recall further from the previous subsection the existence of
  polynomials $P_{1,\bm E},\dots,P_{N,\bm E}$, with $P_{k,\bm
    E}(R_{0,1},\dots,R_{s,\delta})=\Lambda_k$ for all $k$.  The fact
  that $R_{j,i}(\lambda_1,\dots,\lambda_N) - R_{j,i}$ is in
  $\mathcal{I}$ for all $i,j$ implies that
$$P_{k,\bm E}(R_{0,1}(\lambda_1,\dots,\lambda_N),\dots,R_{s,\delta}(\lambda_1,\dots,\lambda_N))-
  P_{k,\bm E}(R_{0,1},\dots,R_{s,\delta})$$
  is in $\mathcal{I}$ as well, for all $k=1,\dots,N$. The left-hand
  side is $\lambda_k$, and the right-hand side $\Lambda_k$, so that
  $\mathcal{I}$ contains all polynomials
  $\Lambda_1-\lambda_1,\dots,\Lambda_N-\lambda_N$. Taken together, the
  two paragraphs so far establish that $\mathcal{I} = \langle
  \Lambda_1-\lambda_1,\dots,\Lambda_N-\lambda_N \rangle$.

  Let now $\mathcal{J}$ be the ideal generated in
  $\K[\Lambda_1,\dots,\Lambda_N]$ by the polynomials
  $\mathscr{E}_{1,1},\dots, \mathscr{E}_{t,\delta}$. Remark first that for any
  $a,b \ge 0$ and $i=1,\dots,t$, 
  $$(x^a y^b f_i) \brem \langle g_{0,\L},\dots,g_{s,\L} \rangle =
  \sum_{j=1}^\delta \mathcal{N}_j(f_i) (x^a y^b \mu_j \brem \langle
  g_{0,\L},\dots,g_{s,\L} \rangle).$$ It follows that for any $A$ in
  $J=\langle f_1,\dots,f_t\rangle$, and for $j=1,\dots,\delta$,
  $\mathcal{N}_j(A)$ is in $\mathcal{J}$. For the same reason, for $A$
  in $I=\langle g_0,\dots,g_s\rangle$, and for $j=1,\dots,\delta$,
  $\mathcal{N}_j(A)$ is in $\mathcal{I}$. We will also need the fact
  that for $A$ in $I^2$, and for all $j$, $\mathcal{N}_j(A)$ is in
  $\mathcal{I}^2$; this is established similarly.

  Recall now our second assumption on $I'$: there exists an ideal $I'
  \subset \K[x,y]$ such that $I + I' = \langle 1 \rangle$ and $I I' =
  J$.  Since $J$ is contained in $I$, the statements in the previous
  paragraph imply that $\mathcal{J}$ is contained in $\mathcal{I}= \langle
  \Lambda_1-\lambda_1,\dots,\Lambda_N-\lambda_N \rangle$, so that 
  $(\lambda_1,\dots,\lambda_N)$ is in the zero-set of $\mathcal{J}$.
  This proves the first claim of the proposition.

  Let further $K,K'$ be in resp. $I$ and $I'$ such that $K+K'=1$.  For
  $i=0,\dots,s$, $g_i$ is in $I$, so that $g_i K' = g_i -g_i K$ is in
  $I I' = J$. By the remark above, for
  $j=1,\dots,\delta$, $A_{j,i}:=\mathcal{N}_j(g_i) - \mathcal{N}_j(g_i
  K)$ is then in~$\mathcal{J}$, whereas $ \mathcal{N}_j(g_i K)$ is in
  $\mathcal{I}^2$.

  Consider the Jacobian matrix $\bm J$ of all polynomials $A_{j,i}$ at
  $(\lambda_1,\dots,\lambda_N)$. Because all terms $ \mathcal{N}_j(g_i
  K)$ are in $\mathcal{I}^2=\langle
  \Lambda_1-\lambda_1,\dots,\Lambda_N-\lambda_N \rangle^2$, their
  Jacobian matrix vanishes at $(\lambda_1,\dots,\lambda_N)$, so that
  $\bm J$ is simply the Jacobian matrix of the polynomials
  $\mathcal{N}_j(g_i)$ at $(\lambda_1,\dots,\lambda_N)$. Because these
  polynomials generate the ideal $\mathcal{I}=\langle
  \Lambda_1-\lambda_1,\dots,\Lambda_N-\lambda_N \rangle$, this matrix
  has trivial kernel. Thus, $\mathcal{J}$ has multiplicity $1$ at
  $(\lambda_1,\dots,\lambda_N)$.
\end{proof}

In the particular case where $I=J$, we have a slightly stronger 
result.

\begin{corollary}
  Suppose that $I=\langle f_1,\dots,f_t\rangle$. Then, $\langle
  \mathscr{E}_{1,1},\dots,\mathscr{E}_{t,\delta} \rangle = \langle
  \Lambda_{1}-\lambda_{1},\dots,
  \Lambda_{{N}}-\lambda_{{N}}\rangle$ in
  $\K[\Lambda_{1},\dots,\Lambda_{{N}}]$.
\end{corollary}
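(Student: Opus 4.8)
The plan is to reuse the machinery already set up in the proof of \cref{prop:mult1}. Recall that there we introduced the ideal $\mathcal{I} \subseteq \K[\Lambda_1,\dots,\Lambda_N]$ generated by the polynomials $\mathcal{N}_j(g_i)$ for $i=0,\dots,s$ and $j=1,\dots,\delta$, the ideal $\mathcal{J}$ generated by $\mathscr{E}_{1,1},\dots,\mathscr{E}_{t,\delta}$ (that is, by the $\mathcal{N}_j(f_i)$), and that we proved the identity $\mathcal{I} = \langle \Lambda_1-\lambda_1,\dots,\Lambda_N-\lambda_N\rangle$. So it suffices to show that $\mathcal{I} = \mathcal{J}$ under the extra hypothesis $I = \langle f_1,\dots,f_t\rangle$.

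The key tool, also extracted from that proof, is the ``absorption'' property of the maps $\mathcal{N}_j$: from the identity
\[
(x^a y^b f_i) \brem \langle g_{0,\L},\dots,g_{s,\L}\rangle = \sum_{j=1}^\delta \mathcal{N}_j(f_i)\, \bigl(x^a y^b \mu_j \brem \langle g_{0,\L},\dots,g_{s,\L}\rangle\bigr),
\]
together with the $\K$-linearity of each $\mathcal{N}_j$, one gets that $\mathcal{N}_j(A) \in \mathcal{J}$ for every $A \in J = \langle f_1,\dots,f_t\rangle$ and every $j$; and the identical argument with the $g_i$'s in place of the $f_i$'s shows that $\mathcal{N}_j(A) \in \mathcal{I}$ for every $A \in I = \langle g_0,\dots,g_s\rangle$ and every $j$.

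With this in hand the two inclusions are immediate. Since each $f_i$ lies in $I = \langle g_0,\dots,g_s\rangle$, the absorption property gives $\mathscr{E}_{i,j} = \mathcal{N}_j(f_i) \in \mathcal{I}$, whence $\mathcal{J} \subseteq \mathcal{I}$. Conversely, since $I = J$, each generator $g_i$ of $I$ lies in $J = \langle f_1,\dots,f_t\rangle$, so $\mathcal{N}_j(g_i) \in \mathcal{J}$ for all $i,j$; as these polynomials generate $\mathcal{I}$, we conclude $\mathcal{I} \subseteq \mathcal{J}$. Hence $\mathcal{J} = \mathcal{I} = \langle \Lambda_1-\lambda_1,\dots,\Lambda_N-\lambda_N\rangle$, which is the claim. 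There is essentially no obstacle beyond the bookkeeping already carried out for \cref{prop:mult1}; the only point requiring care is that the absorption property must be invoked in both directions — once to push $J$ into $\mathcal{J}$ and once to push $I$ into $\mathcal{I}$ — and it is precisely this symmetric use that consumes the hypothesis $I = J$.
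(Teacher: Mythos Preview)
Your proof is correct and follows essentially the same approach as the paper's. The paper's proof is a two-line remark that ``if $I=J$, then $\mathcal{I}=\mathcal{J}$'' together with the already established identity $\mathcal{I}=\langle \Lambda_1-\lambda_1,\dots,\Lambda_N-\lambda_N\rangle$; you have simply spelled out the implicit two-inclusion argument using the absorption property from the proof of \cref{prop:mult1}, which is exactly what the paper is relying on.
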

\begin{proof}
  Using the notation in the proof of the proposition, we see that if
  $I=J$, then $\mathcal{I}=\mathcal{J}$, and we proved that
  $\mathcal{I} = \langle \Lambda_1-\lambda_1,\dots,\Lambda_N-\lambda_N
  \rangle$.
  %% It suffices to show that $(\lambda_{1},\dots,\lambda_{{N}})$ is
  %% the only common solution to the equations
  %% $\mathscr{E}_{1,1},\dots,\mathscr{E}_{t,\delta}$. Let thus
  %% $(\lambda^\star_{1},\dots,\lambda^\star_{{N}}) \in \overline
  %% \K{}^{N}$ be such a solution, and let us then write $\bm
  %% G^\star=\Call{algo:ReducedBasisFromParameters}{\bm E,
  %% (\lambda^\star_{1},\dots,\lambda^\star_{{N}})}$ for the
  %% corresponding reduced Gr\"obner basis, that generates an ideal
  %% $I^\star$ in $\overline\K[x,y]$. Since by assumption ${\bm
  %% G}^\star$ reduces $f_1,\dots,f_t$ to zero, we have $I=\langle
  %% f_1,\dots,f_t\rangle \subset I^\star$. These ideals have the same
  %% initial ideal, so they are equal; this implies that
  %% $(\lambda^\star_{1},\dots,\lambda^\star_{{N}})
  %% =(\lambda_{1},\dots,\lambda_{{N}})$.
\end{proof}

In our other particular case, where $I$ is the $\langle
x,y\rangle$-primary component of $J$, we can obtain a similar stronger
statement. Recall that the punctual Gr\"obner cell $\mathcal{C}_0(\bm
E)$ has dimension $N'=\delta - n_0$, and that the parameters for
$\mathcal{C}_0(\bm E)$ are obtained by setting $N-N'$ parameters to
zero in the parameters $\Lambda_1,\dots,\Lambda_N$ of $\mathcal{C}(\bm
E)$.

Let $\tau_{1},\dots,\tau_{N-N'}$ be the indices of these parameters
set to zero, and let $\Lambda_{\sigma_1},\dots,\Lambda_{\sigma_{N'}}$
be the remaining $N'$ parameters. For $i=1,\dots,t$ and
$j=1,\dots,\delta$, let $\mathscr{F}_{i,j}$ be the polynomial in
$\K[\Lambda_{\sigma_1},\dots,\Lambda_{\sigma_{N'}}]$ obtained by
setting $\Lambda_{\tau_1},\dots,\Lambda_{\tau_{N-N'}}$ to zero in
$\mathscr{E}_{i,j}$. Then, we have the following.

\begin{corollary}\label{coro:punctuallift}
  Suppose that $I$ is $\langle x,y\rangle$-primary. Then, $\langle
  \mathscr{F}_{1,1},\dots,\mathscr{F}_{t,\delta} \rangle = \langle
  \Lambda_{\sigma_1}-\lambda_{\sigma_1},\dots,
  \Lambda_{\sigma_{N'}}-\lambda_{\sigma_{N'}}\rangle$ in 
$\K[\Lambda_{\sigma_1},\dots,\Lambda_{\sigma_{N'}}]$.
\end{corollary}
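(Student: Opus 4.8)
The plan is to transport the argument proving \cref{prop:mult1} to the punctual cell, and then to promote the resulting multiplicity-one statement into an equality of ideals by showing that, globally, the punctual system cuts out a single reduced point. First I would set up the punctual analogues of the objects of \cref{sec:inverse conversion}: running \textsc{ReducedBasisFromPunctualParameters} on $\bm E$ and indeterminates $\Lambda_{\sigma_1},\dots,\Lambda_{\sigma_{N'}}$ yields a parametric reduced Gr\"obner basis $g_{0,\L'},\dots,g_{s,\L'}$ of $\mathcal{C}_0(\bm E)$ with coefficients in $R:=\K[\Lambda_{\sigma_1},\dots,\Lambda_{\sigma_{N'}}]$, together with $\K$-linear maps $\mathcal{N}'_j\colon\K[x,y]\to R$, $A\mapsto{\rm coeff}(A\brem\langle g_{0,\L'},\dots,g_{s,\L'}\rangle,\mu_j)$. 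Because reduction modulo a Gr\"obner basis with monic initial terms uses only additions and multiplications on the coefficients, it commutes with the specialization $\Lambda_{\tau_k}\mapsto 0$; hence $\mathscr{F}_{i,j}=\mathcal{N}'_j(f_i)$, and \textsc{PunctualParametersFromReducedBasis} provides polynomials $P'_{\sigma_1,\bm E},\dots,P'_{\sigma_{N'},\bm E}$ recovering $\Lambda_{\sigma_k}$ from the coefficients of $(g_{0,\L'},\dots,g_{s,\L'})$. Writing $\mathcal{M}:=\langle\Lambda_{\sigma_1}-\lambda_{\sigma_1},\dots,\Lambda_{\sigma_{N'}}-\lambda_{\sigma_{N'}}\rangle$, $\mathcal{I}':=\langle\mathcal{N}'_j(g_i):0\le i\le s,\,1\le j\le\delta\rangle$ and $\mathcal{J}':=\langle\mathscr{F}_{1,1},\dots,\mathscr{F}_{t,\delta}\rangle$, the opening paragraphs of the proof of \cref{prop:mult1}, copied verbatim with $\mathcal{N}'_j$ and $P'_{\sigma_k,\bm E}$ in place of $\mathcal{N}_j$ and $P_{k,\bm E}$, give $\mathcal{I}'=\mathcal{M}$; and the multiplicity-one part of that proof --- take $K\in I$, $K'\in I'$ with $K+K'=1$, so that $g_iK'\in II'=J$ gives $\mathcal{N}'_j(g_iK')\in\mathcal{J}'$ while $g_iK\in I^2$ gives $\mathcal{N}'_j(g_iK)\in(\mathcal{I}')^2=\mathcal{M}^2$ --- yields $\mathcal{M}=\mathcal{I}'\subseteq\mathcal{J}'+\mathcal{M}^2$. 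Since $f_i\in J\subseteq I$ also gives $\mathcal{J}'\subseteq\mathcal{I}'=\mathcal{M}$, we get $\mathcal{M}=\mathcal{J}'+\mathcal{M}^2$.

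The main obstacle is the step genuinely new compared with \cref{prop:mult1}: showing that over $\overline{\K}$ the only solution of $\mathscr{F}_{1,1}=\cdots=\mathscr{F}_{t,\delta}=0$ is $(\lambda_{\sigma_1},\dots,\lambda_{\sigma_{N'}})$, equivalently $\sqrt{\mathcal{J}'}=\mathcal{M}$. A point $\mu\in\overline{\K}^{N'}$ lies in the zero set of $\mathcal{J}'$ exactly when every $f_i$ reduces to $0$ modulo the punctual Gr\"obner basis $\mathcal{G}_\mu$ with parameters $\mu$, i.e.\ when $J\subseteq I_\mu$, where $I_\mu:=\langle\mathcal{G}_\mu\rangle$ is an $\langle x,y\rangle$-primary ideal in $\overline{\K}[x,y]$ of colength $\delta$ and initial ideal $\langle\bm E\rangle$. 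After base change to $\overline{\K}$ the ideal $I$ remains $\langle x,y\rangle$-primary and $J=II'$ with $I+I'=\langle 1\rangle$ still holds; as $I\subseteq\langle x,y\rangle$, comaximality forces $I'\not\subseteq\langle x,y\rangle$, so localizing $J\subseteq I_\mu$ at the maximal ideal $\langle x,y\rangle$ gives $I_{\langle x,y\rangle}=J_{\langle x,y\rangle}\subseteq(I_\mu)_{\langle x,y\rangle}$, and contracting back (both $I$ and $I_\mu$ being $\langle x,y\rangle$-primary) gives $I\subseteq I_\mu$. Comparing colengths, $\dim_{\overline{\K}}\overline{\K}[x,y]/I_\mu=\delta=\dim_{\overline{\K}}\overline{\K}[x,y]/I$ forces $I_\mu=I$, and the bijectivity of Conca and Valla's parametrization of $\mathcal{C}_0(\bm E)$ then forces $\mu=(\lambda_{\sigma_1},\dots,\lambda_{\sigma_{N'}})$.

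It remains to combine the two halves. The previous paragraph shows $R/\mathcal{J}'$ is a finite-dimensional $\K$-algebra supported at the single point $(\lambda_{\sigma_1},\dots,\lambda_{\sigma_{N'}})$, hence Artinian local with maximal ideal $\mathcal{M}/\mathcal{J}'$; the identity $\mathcal{M}=\mathcal{J}'+\mathcal{M}^2$ reads $\mathcal{M}/\mathcal{J}'=(\mathcal{M}/\mathcal{J}')^2$, and since the maximal ideal of an Artinian local ring is nilpotent this forces $\mathcal{M}/\mathcal{J}'=0$. Therefore $\mathcal{J}'=\mathcal{M}$, which is precisely the asserted equality $\langle\mathscr{F}_{1,1},\dots,\mathscr{F}_{t,\delta}\rangle=\langle\Lambda_{\sigma_1}-\lambda_{\sigma_1},\dots,\Lambda_{\sigma_{N'}}-\lambda_{\sigma_{N'}}\rangle$.
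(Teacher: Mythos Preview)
Your proof is correct and follows essentially the same route as the paper. The paper's version is more compressed: it cites \cref{prop:mult1} directly and observes that deleting columns from a full-rank Jacobian preserves full column rank, whereas you rerun the argument of \cref{prop:mult1} inside the punctual cell; for uniqueness the paper argues that $K'$ (from $K+K'=1$) is a unit modulo any punctual ideal $I^\star\supseteq J$, while you obtain $I\subseteq I_\mu$ by localizing at $\langle x,y\rangle$ --- these are equivalent phrasings of the same idea. Your explicit Artinian/Nakayama step at the end fills in a passage the paper leaves tacit.
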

\begin{proof}
  We proved in \cref{prop:mult1} that
  $\lambda_{\sigma_1},\dots,\lambda_{\sigma_{N'}}$ is a solution of $
  \mathscr{F}_{1,1},\dots,\mathscr{F}_{t,\delta}$. Besides, since the
  Jacobian matrix of $\mathscr{E}_{1,1},\dots,\mathscr{E}_{t,\delta}$
  has trivial kernel at $(\lambda_1,\dots,\lambda_N)$ (with thus
  $\lambda_{\tau_1}=\dots=\lambda_{\tau_{N-N'}}=0$), it is also the
  case for that of $\mathscr{F}_{1,1},\dots,\mathscr{F}_{t,\delta}$ at
  $(\lambda_{\sigma_1},\dots,\lambda_{\sigma_{N'}})$.  The only
  missing property is thus that
  $(\lambda_{\sigma_1},\dots,\lambda_{\sigma_{N'}})$ is the only
  common solution to these equations. Let
  $(\lambda^\star_{\sigma_1},\dots,\lambda^\star_{\sigma_{N'}}) \in
  \overline \K{}^{N'}$ be such a solution, let ${\mathcal G}^\star$ be the
  corresponding reduced Gr\"obner basis, and let $I^\star$ be the
  ideal it generates (in particular, $V(I^\star)= \{(0,0)\}$). Since
  by assumption ${\mathcal G}^\star$ reduces $f_1,\dots,f_t$ to zero, we
  have $J \subset I^\star$.

  By assumption on $I$, there exists an ideal $I' \subset \K[x,y]$
  such that $I + I' = \langle 1 \rangle$ and $I I' = J$. Let $K,K'$ be
  in resp. $I$ and $I'$ such that $K+K'=1$; in particular, $K'$ does
  not vanish at $(0,0)$. Since $V(I^\star)= \{(0,0)\}$, it follows
  that $K'$ is a unit modulo $I^\star$.

  Recall that we write ${\mathcal G} = (g_0,\dots,g_s)$ for the reduced
  lexicographic Gr\"obner basis of $I$. Then, for $i=0,\dots,s$, the
  polynomial $g_i K'$ is in $I I'$, so in $J$, and thus in
  $I^\star$. Since $K'$ is a unit modulo $I^\star$, this means that
  $g_i$ is in $I^\star$. Altogether, this proves that $I$ is contained
  in $I^\star$. Since these ideals have the same initial ideals for
  the lexicographic order, they are then equal.  This in turn proves
  that
  $(\lambda_{\sigma_1},\dots,\lambda_{\sigma_{N'}})=(\lambda^\star_{\sigma_1},\dots,\lambda^\star_{\sigma_{N'}})$.
\end{proof}

\paragraph{Example.} In our running example, we consider
only the punctual Gr\"obner cell, and we take $f_1$ and $f_2$ as in
the introduction. To write the equations for the punctual Gr\"obner
parameters, we consider $g_{0,\L},\dots,g_{3,\L}$ and set to zero the
parameters written $\Lambda_{\tau_1},\dots,\Lambda_{\tau_{N-N'}}$
above; the resulting polynomials were given in \eqref{eq:GL}, written
in variables $\lambda_1,\dots,\lambda_5$ (recall that $N'=5$
here). After reducing $f_1$ and $f_2$ by these polynomials, and taking
coefficients (we discard those that are identically zero), we obtain
\begin{align}\label{eq:example}
    14\Lambda_1,\quad
    14\Lambda_2 - 17,\quad
    -14\Lambda_1\Lambda_5^2 + 14\Lambda_3 - 28\Lambda_4\Lambda_5,\quad
    14\Lambda_2\Lambda_5 + 34,\quad
    -18\Lambda_4 + 10\Lambda_5.
\end{align}
As claimed, these polynomials generate the  maximal ideal 
\begin{align*}
  \Lambda_1,\quad \Lambda_2-17/14,\quad \Lambda_3-40/9,\quad \Lambda_4+10/9,\quad \Lambda_5+2.
\end{align*}
Because the input $f_1,f_2$ and ${\mathcal G}$ have rather small degrees, the
equations in~\eqref{eq:example} can be solved by hand. This is of
course not the case in general (although on many examples, several of
the equations are indeed linear).

%%%%%%%%%%%%%%%%%%%%%%%%%%%%%%%%%%%%%%%%%%%%%%%%%%%%%%%%%%%%
 %%%%%%%%%%%%%%%%%%%%%%%%%%%%%%%%%%%%%%%%%%%%%%%%%%%%%%%%%%%%
%%%%%%%%%%%%%%%%%%%%%%%%%%%%%%%%%%%%%%%%%%%%%%%%%%%%%%%%%%%%

\section{Newton iteration}\label{sec:Newton}

We can finally describe our main algorithm, which computes Gr\"obner
parameters using Newton iteration. For this, we will suppose that $\K$
is the field of fractions of a domain $\A$, and we consider a maximal
ideal $\mm$ in $\A$, with residual field $\k=\A/\mm$.

Consider the following objects: polynomials $(f_1,\dots,f_t)$ in
$\A[x,y]$ and a reduced Gr\"obner basis ${\mathcal G}$ in $\K[x,y]$.  We
make the following assumptions:
\begin{itemize}
\item[${\sf A}'_1.$] the ideal generated by ${\mathcal G}$ in $\K[x,y]$ is
  the intersection of some of the primary components of $\langle
  f_1,\dots, f_t\rangle$,
\item[${\sf A}'_2.$] all polynomials in ${\mathcal G}$ are in $\A_\mm[x,y]$,
\item[${\sf A}'_3.$] the ideal generated by $\mathcal G_\mm = \mathcal G \brem
  \mm$ in $\k[x,y]$ is the intersection of some of the primary
  components of the ideal $\langle f_1 \brem \mm,\dots,f_t \brem
  \mm\rangle$.
\end{itemize}
The last two items express that $\mm$ is good for $\mathcal G$, in the
sense of \cref{def:good}. Important cases where the first and third
assumptions are satisfied are as in the previous subsection, viz. when
${\mathcal G}_{\mm}$ and ${\mathcal G}$ generate the ideals $\langle f_1 \brem
\mm,\dots, f_t \brem \mm\rangle$, resp.\ $\langle f_1,\dots,
f_t\rangle$ themselves, or when they describe the $\langle
x,y\rangle$-primary components of these ideals.

Given $\mm$, $(f_1,\dots,f_t)$ and ${\mathcal G}_{\mm}$, we show here how
to compute ${\mathcal G} \brem \mm^{K}$, for an arbitrary $K \ge 1$.

\algoName{algo:LiftOneStep} describes the core lifting procedure; it
takes as input the Gr\"obner parameters of $\mathcal G$, known modulo
$\mm^\kappa$, for some $\kappa\ge 0$, and returns these parameters
modulo $\mm^{2\kappa}$ (note that since $\mathcal G$ has coefficients in
$\A_\mm$ by ${\sf A}'_2$, its Gr\"obner parameters are in $\A_\mm$ as
well, so reducing them modulo powers of $\mm$ makes sense).

The algorithm simply applies Newton's iteration to the equations
$\mathscr{E}_{i,j}$ introduced in the previous subsection. Note
however that we never explicitly write down these equations, as they
may involve a large number of terms: instead, we only compute their
first order Taylor expansions, as this is enough to conduct the
iteration. This explains why below, we work modulo the ideal $\langle
\Lambda_1,\dots,\Lambda_N \rangle^2$.

Since we want to give a cost estimate that counts operations in
$\A_{2\kappa}$, we here assume that we already know the reductions of
the input equations $f_1,\dots,f_t$ modulo $\mm^{2\kappa}$; they are
written $f'_1,\dots,f'_t \in \A_{2\kappa}[x,y]$.  Some steps in the
algorithm require a few further comments, namely the calls to
\nameref{algo:ReducedBasisFromParameters} at \cref{step_gstar},
\nameref{algo:Reduction} at \cref{step_reduction} and
\textsc{LinearSolve} at \cref{step_linsolve}.
\begin{itemize}
\item At \cref{step_gstar}, we are working with Gr\"obner parameters
  written $(\ell_1,\dots,\ell_N)$, that are in
  $\B=\A_{2\kappa}[\Lambda_1,\dots,\Lambda_N]/ \langle
  \Lambda_1,\dots,\Lambda_N \rangle^2$ (in the algorithm, elements of
  $\B$ are written as $b_0 + \sum_{i=1}^n b_i \Lambda_i$, for some
  $b_i$'s in $\A_{2\kappa}$). Recall that
  \algoName{algo:ReducedBasisFromParameters} only does additions and
  multiplications, and uses constants from $\Z$, so we can run this
  algorithm with inputs in $\B$; we keep in mind, though, that its
  properties were only established for inputs in a field.
  
  The same remark applies at \cref{step_reduction}, for
  \algoName{algo:ReductionGeneralInput}.
  
\item The last subroutine solves a linear system over $\A_{2\kappa}$: the
  inputs are elements of $\B$, which we recall take the form $b_0 +
  \sum_{i=1}^n b_i \Lambda_i$, for some $b_i$'s in $\A_{2\kappa}$. Procedure
  \textsc{LinearSolve} then sees these elements are linear equations
  in the $\Lambda_i$'s. We will prove that a solution exists, and also
  that the corresponding matrix admits a maximal minor that does not
  vanish modulo $\mm$, so that the solution is actually unique.
\end{itemize}

%% \State $\bm E \gets \text{initial terms of~} {\mathcal G}_\kappa$
%% \Return $\Call{algo:ReducedBasisFromParameters}{\bm E, (\alpha_1,\dots,\alpha_N)}$

\begin{algorithm}
 \algoCaptionLabel{LiftOneStep}{(f'_1,\dots,f'_t), \bm E, (\alpha_1,\dots,\alpha_N)}
  \begin{algorithmic}[1]
    \Require $(f'_1,\dots,f'_t)$ in $\A_{2\kappa}[x,y]$, monomials $\bm E$, $(\alpha_1,\dots,\alpha_N)$ in
    $\A_\kappa^N$
    \Ensure  $(\alpha''_1,\dots,\alpha''_N)$ in $\A_{2\kappa}^N$
    \State $(\alpha'_1,\dots,\alpha'_N) \gets \text{lift of~} (\alpha_1,\dots,\alpha_N) \text{~to~} \A_{2\kappa}^N$
    \State $\mu_1,\dots,\mu_\delta \gets \text{monomials not in~} \langle \bm E \rangle$
    \For{$i=1,\dots,N$}
    \State $\ell_i \gets \alpha'_i + \Lambda_i$ \Comment{all $\ell_i$ in $\B=\A_{2\kappa}[\Lambda_1,\dots,\Lambda_N]/ \langle \Lambda_1,\dots,\Lambda_N \rangle^2$}
    \EndFor
    \State ${\mathcal G}^* \gets \Call{algo:ReducedBasisFromParameters}{\bm E, (\ell_1,\dots,\ell_N)}$
    \Comment{computations done over $\B$} \label{step_gstar}
    \State $\mathscr{R} \gets [\,]$
    \For{$i=1,\dots,t$}
    \State $r_i \gets \Call{algo:ReductionGeneralInput}{f'_i, {\mathcal G}^*}$
    \Comment{computations done  over $\B$}\label{step_ri}
 \label{step_reduction}
    \State \InlineFor{$j=1,\dots,\delta$}{$r_{i,j} \gets \text{coeff}(r_i,\mu_j)$} \Comment{all $r_{i,j}$ in $\B$} 
    \label{step_rij}
    \State $\mathscr{R} \gets \mathscr{R} \text{~cat~} [r_{i,1},\dots,r_{i,\delta}]$ 
    \Comment{$\mathscr{R}$ is an array with entries in $\B$}
    \EndFor
    \State  $(\epsilon_1,\dots,\epsilon_N) \gets \textsc{LinearSolve}(\mathscr{R})$ \label{step_linsolve}
    \Comment{all $\epsilon_i$ in $\A_{2\kappa}$}
    \State \InlineFor{$i=1,\dots,N$} {$\alpha_i'' \gets \alpha_i' + \epsilon_i$}
    \Comment{all $\alpha''_i$ in $\A_{2\kappa}$}
    \State \Return $(\alpha''_1,\dots,\alpha''_N)$
  \end{algorithmic}
\end{algorithm}

\begin{proposition}
  Suppose that ${\sf A}'_1$, ${\sf A}'_2$, ${\sf A}'_3$ hold, and let
  $(\lambda_1,\dots,\lambda_N) \in \A_\mm^N$ be the Gr\"obner
  parameters of $\mathcal G$.  Given $(f_1,\dots,f_t) \brem \mm^{2\kappa}$
  and $(\lambda_1 \brem \mm^\kappa,\dots,\lambda_N \brem \mm^\kappa)$,
  \algoName{algo:LiftOneStep} correctly returns $(\lambda_1 \brem
  \mm^{2\kappa},\dots,\lambda_N \brem \mm^{2\kappa})$.
\end{proposition}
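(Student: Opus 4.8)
The plan is to recognize this proposition as a standard correctness statement for one Newton step, so the bulk of the work is to verify that the algorithm really does implement the Newton iteration associated to the system $\mathscr{E}_{1,1},\dots,\mathscr{E}_{t,\delta}$ analyzed in \cref{prop:mult1}, and then to invoke the quadratic convergence of Newton iteration over the complete local ring $\A_\mm$ (or, operationally, over the tower $\A/\mm^{2^i}$). First I would set $\bm\alpha=(\alpha_1,\dots,\alpha_N)$ with $\bm\alpha \equiv \bm\lambda \pmod{\mm^\kappa}$ by hypothesis, and $\bm\alpha'$ an arbitrary lift to $\A_{2\kappa}^N$, so that $\bm\alpha'=\bm\lambda+\bm u$ for some $\bm u\in(\mm^\kappa/\mm^{2\kappa})^N$; the goal is to show the returned $\bm\alpha''$ satisfies $\bm\alpha''\equiv\bm\lambda\pmod{\mm^{2\kappa}}$.

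The key steps, in order: (i) Observe that running \nameref{algo:ReducedBasisFromParameters} on the symbolic parameters $\ell_i=\alpha_i'+\Lambda_i$ over $\B=\A_{2\kappa}[\Lambda_1,\dots,\Lambda_N]/\langle\Lambda_1,\dots,\Lambda_N\rangle^2$ computes exactly the first-order Taylor expansion at $\bm\alpha'$ of the parametric Gröbner basis $g_{j,\L}$; since that algorithm uses only $(+,\times)$ and integer constants (the remark after \cref{prop:redfromP}), this symbolic evaluation is legitimate and the resulting ${\mathcal G}^*$ has coefficients $R_{j,i}(\bm\alpha')+\sum_k \frac{\partial R_{j,i}}{\partial\Lambda_k}(\bm\alpha')\,\Lambda_k$ in $\B$. (ii) Similarly, because \nameref{algo:ReductionGeneralInput} performs only $(+,\times)$ — here one must check there is no hidden inversion: the only divisions are Euclidean divisions by the polynomials $M_i$, $D_i$, $G_i$ and the $g_j/M_{j}$, all of which are monic (in $x$, resp.\ in $y$) by Lazard's structure theorem, hence divisions requiring no inversions in the coefficient ring — reducing $f_i'$ modulo ${\mathcal G}^*$ over $\B$ computes $\mathcal{N}_j(f_i)$ together with its first-order part, i.e.\ $r_{i,j}=\mathscr{E}_{i,j}(\bm\alpha')+\sum_k \frac{\partial \mathscr{E}_{i,j}}{\partial\Lambda_k}(\bm\alpha')\,\Lambda_k$. (iii) Hence \textsc{LinearSolve}$(\mathscr R)$ is exactly asked to solve $J(\bm\alpha')\,\bm\epsilon = -\mathscr{E}(\bm\alpha')$ where $J$ is the Jacobian of the $\mathscr{E}_{i,j}$; the update $\bm\alpha''=\bm\alpha'+\bm\epsilon$ is one Newton step. (iv) Now invoke \cref{prop:mult1}: since $\bm\lambda$ is a multiplicity-one solution, the Jacobian $J(\bm\lambda)$ has trivial kernel over $\K$, so it has a nonzero maximal minor; that minor is a unit in $\A_\mm$ because $\mm$ is good (the reduction $\mathcal G_\mm$ is still a reduced Gröbner basis with the same $\bm E$, and the whole construction specializes, so the minor is nonzero mod $\mm$). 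Consequently $J(\bm\alpha')$ is invertible over $\A_{2\kappa}$ after selecting those rows — this justifies both existence and uniqueness of $\bm\epsilon$ modulo $\mm^{2\kappa}$, as the proposition's preamble claims — and the standard Newton estimate $\bm\alpha''-\bm\lambda \in \langle \bm\alpha'-\bm\lambda\rangle^2 + (\text{terms in }\mm^{2\kappa}) \subseteq (\mm^\kappa)^2=\mm^{2\kappa}$ gives the result. One should also remark that using only $t\delta$ of the equations (those coming from $f_1,\dots,f_t$) rather than the full generating set of $\mathcal{I}$ is fine precisely because \cref{prop:mult1} proved multiplicity one for this subsystem $\mathscr{E}_{i,j}$.

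The main obstacle I expect is step (iv), specifically lifting the ``Jacobian has trivial kernel over $\K$'' statement of \cref{prop:mult1} to ``a maximal minor of the Jacobian is a unit in $\A_\mm$,'' and correspondingly ``$\mathscr{E}(\bm\alpha')$ lies in the row space of $J(\bm\alpha')$ over $\A_{2\kappa}$.'' For the minor: since $\mm$ is good, $\mathcal G_\mm$ is the reduced Gröbner basis of an ideal that is an intersection of primary components of $\langle f_1\brem\mm,\dots,f_t\brem\mm\rangle$, and it has the same $\bm E$; applying \cref{prop:mult1} over $\k$ shows the reduced Jacobian mod $\mm$ still has trivial kernel, hence some maximal minor of $J(\bm\lambda)$ is nonzero mod $\mm$, i.e.\ a unit in $\A_\mm$. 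For solvability at $\bm\alpha'$: one writes $\mathscr{E}_{i,j}(\bm\alpha')=\mathscr{E}_{i,j}(\bm\lambda)+J(\bm\lambda)(\bm\alpha'-\bm\lambda)+(\text{quadratic in }\bm\alpha'-\bm\lambda)$; the first term vanishes, the quadratic term is in $\mm^{2\kappa}$ and hence zero in $\A_{2\kappa}$, and $J(\bm\alpha')\equiv J(\bm\lambda)\pmod{\mm^\kappa}$, so the linear system is consistent with $\bm\epsilon\equiv -(\bm\alpha'-\bm\lambda)\pmod{\mm^{2\kappa}}$ modulo the invertibility provided by the unit minor — which immediately yields $\bm\alpha''\equiv\bm\lambda\pmod{\mm^{2\kappa}}$ and closes the argument.
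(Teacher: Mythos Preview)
Your proposal is correct and follows essentially the same route as the paper: identify the computation over $\B=\A_{2\kappa}[\Lambda]/\langle\Lambda\rangle^2$ as the first-order Taylor expansion of $\mathscr{E}$ at $\alpha'$, then use \cref{prop:mult1} applied over $\k$ (via assumption ${\sf A}'_3$) to get a maximal minor of the Jacobian that is nonzero modulo $\mm$, hence a unit in $\A_{2\kappa}$, which gives both uniqueness of $\epsilon$ and, via the Taylor estimate, $\epsilon\equiv\lambda-\alpha'\pmod{\mm^{2\kappa}}$. The only cosmetic difference is that the paper justifies running the $(+,\times)$-only algorithms over the non-field $\B$ by first lifting the $\ell_i$ to the domain $\A[\Lambda_1,\dots,\Lambda_N]$, computing there inside its fraction field, and then reducing back, whereas you argue directly that a fixed $(+,\times)$ straight-line program defines a polynomial map whose evaluation on $\alpha'+\Lambda$ mod $\langle\Lambda\rangle^2$ is its Taylor jet; both arguments are valid and yield the same conclusion.
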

\begin{proof}
  Let $\lambda=(\lambda_1,\dots,\lambda_N) \in \A_\mm^N$ be the
  Gr\"obner parameters associated to ${\mathcal G}$.  By assumption, the
  vector $\alpha=(\alpha_1,\dots,\alpha_N)$ satisfies $\alpha =
  \lambda \brem \mm^\kappa$, and the same holds for $\alpha'$. We
  prove that the output $\alpha''= (\alpha''_1,\dots,\alpha''_N)$ is
  equal to $\lambda \brem \mm^{2\kappa}$.

  This is simply the classical proof of the validity of Newton's
  iteration. Let $\delta$ be the degree ${\mathcal G}$, and let
  $\mathscr{E}=(\mathscr{E}_{1,1},\dots,\mathscr{E}_{t,\delta})$ be
  the equations introduced in the previous subsection for the
  polynomials $f_1,\dots,f_t$ and ${\mathcal G}$, over the field
  $\K$. Since all $f_i$'s have coefficients in $\A$, and since the
  reduction process introduces no new denominator, the polynomials
  $\mathscr{E}$ are in $\A[\Lambda_1,\dots,\Lambda_N]$. Using
  \cref{prop:mult1}, assumption ${\sf A}'_1$ shows that $\lambda$ is a
  solution to these equations (and that their Jacobian matrix at
  $\lambda$ has trivial kernel, but we will not need this fact
  directly).

  Let further
  $\mathscr{E}_\mm=(\mathscr{E}_{\mm,1,1},\dots,\mathscr{E}_{\mm,t,\delta})$
  be these same equations, but this time for the polynomials $f_1
  \brem \mm,\dots,f_t \brem \mm$ and $\mathcal G_\mm$. These are
  polynomials in $\k[\Lambda_1,\dots,\Lambda_N]$, with
  $\mathscr{E}_{\mm} = \mathscr{E}\brem \mm$.  Using
  \cref{prop:mult1}, assumption ${\sf A}'_3$ shows that $\lambda \brem
  \mm$ is a solution to these equations (which we already could deduce
  from the previous paragraph) and that their Jacobian matrix at
  $\lambda\brem{\mm}$ has trivial kernel. We will use this below.
  
  We claim that for all $i,j$, the coefficient $r_{i,j}$ computed at
  \cref{step_rij} is equal to
  $\mathscr{E}_{i,j}(\ell_1,\dots,\ell_N)$, computed in
  $\B=\A_{2\kappa}[\Lambda_1,\dots,\Lambda_N]/ \langle
  \Lambda_1,\dots,\Lambda_N \rangle^2$. The only point we have to be
  careful with is that the output of
  \algoName{algo:ReducedBasisFromParameters} is specified as being a
  Gr\"obner basis only if the inputs are in a field. To deal with
  this, let $\ell_1',\dots,\ell'_N$ be arbitrary lifts of
  $\ell_1,\dots,\ell_N$ to the domain $\A[\Lambda_1,\dots,\Lambda_N]$,
  and let $\mathcal G'$ be the output of
  $\Call{algo:ReducedBasisFromParameters}{\bm E,
    (\ell'_1,\dots,\ell'_N)}$.  These polynomials form a Gr\"obner
  basis in $\K(\Lambda_1,\dots,\Lambda_N)[x,y]$, which happens to have
  all its coefficients in $\A[\Lambda_1,\dots,\Lambda_N]$, and $\mathcal G^*$ computed at \cref{step_gstar} is the reduction of $\mathcal G'$ modulo
  $\mm^{2\kappa} + \langle \Lambda_1,\dots,\Lambda_N \rangle^2$.

  Similarly, at \cref{step_reduction},
  \algoName{algo:ReductionGeneralInput} can take as input polynomials
  with coefficients in $\B$, but its output was only specified for
  polynomials with coefficients in a field.  This is handled as
  before, and gives us that for all $i$, $r_i$ is the reduction modulo
  $\mm^{2\kappa} + \langle \Lambda_1,\dots,\Lambda_N \rangle^2$ of the
  polynomial $f_i \brem \mathcal G'$. Now, the coefficients of $f_i \brem
  \mathcal G'$ are the polynomials $\mathscr{E}_{i,j}$ evaluated at
  $(\ell'_1,\dots,\ell'_N)$, so altogether, for all $i,j$,
  $r_{i,j}=\mathscr{E}_{i,j}(\ell_1,\dots,\ell_N)$, as an element of
  $\B$.  Taking all $i,j$ at once, we obtain the following equalities
  over $\B$:
  \begin{align*}
    \mathscr{R} &= \mathscr{E}(\alpha'_1+\Lambda_1,\dots,\alpha'_N+\Lambda_N)
    \\ &= \mathscr{E}(\alpha') +  \text{jac}(\mathscr{E}, \alpha') [\Lambda_1~\cdots~\Lambda_N]^T,
  \end{align*}
  where $ \text{jac}(\mathscr{E}, \alpha')$ is the Jacobian matrix of
  $\mathscr{E}$ evaluated at $\alpha'$. First, we show that the system
  of linear equations $\mathscr{R}$ has a unique solution
  $\epsilon=(\epsilon_1,\dots,\epsilon_N)$ in $\A_{2\kappa}^N$.
  Indeed, given two solution vectors $\epsilon$ and $\epsilon'$ in
  $\A_{2\kappa}^N$, we obtain the relation
  $$ \text{jac}(\mathscr{E}, \alpha')
  [\epsilon_1-\epsilon'_1~\cdots~\epsilon_N-\epsilon'_N]^T
  =[0~\cdots~0]^T$$ over $\A_{2\kappa}$.  We pointed out above that
  $\text{jac}(\mathscr{E} \brem \mm, \lambda \brem {\mm})$ has trivial
  kernel, so it admits a non-zero $N$-minor in
  $\k=\A/\mm=\A_{2\kappa}/\mm$.  Now, by assumption, $\alpha' \brem
  \mm = \lambda \brem {\mm}$, so that $\text{jac}(\mathscr{E},
  \alpha')$ itself admits an $N$-minor invertible modulo $\mm$, and
  thus in $\A_{2\kappa}$. This in turn implies that
  $\epsilon=\epsilon'$, as vectors over $\A'/\mm^{2\kappa}$. Our first
  claim is proved.

  Second, we show that $\epsilon=(\lambda-\alpha') \brem
  \mm^{2\kappa}$ is a solution to these linear equations.  Indeed,
  modulo $\mm^{2\kappa}$, we have the Taylor expansion
  $\mathscr{E}(\alpha'+\epsilon)=\mathscr{E}(\alpha') +
  \text{jac}(\mathscr{E}, \alpha') [\epsilon_1~\cdots~\epsilon_N]^T$:
  higher-order terms vanish, since all entries of $\epsilon$ are by
  assumption in $\mm^\kappa$. Now, $\alpha'+\epsilon = \lambda \brem
  \mm^{2\kappa}$, so $\mathscr{E}(\alpha'+\epsilon)=0 \brem
  \mm^{2\kappa}$, and our claim follows.
  
  The two previous paragraphs prove that at the end of the while loop,
  the value $\alpha''$ satisfies $\alpha'' = \alpha' +
  (\lambda-\alpha') \brem \mm^{2\kappa} = \lambda \brem
  \mm^{2\kappa}$, so the proof is complete.
\end{proof}

\begin{proposition}\label{prop:LiftOneStepCompl}
  Let $\bm E= (y^{n_0}, x^{m_1} y^{n_1}, \dots, x^{m_{s-1}}
  y^{n_{s-1}} , x^{m_s})$ be the initial terms of $\mathcal G$, and suppose
  that all $f_i$'s have degree at most $d$.
  
  Under assumptions ${\sf A}'_1$, ${\sf A}'_2$, ${\sf A}'_3$,
  \algoName{algo:LiftOneStep} uses $\softO (s^2 n_0 m_s + t\delta(d^2
  + d m_s + s\delta + \delta^{\omega-1}))$ operations in
  $\A_{2\kappa}$.
\end{proposition}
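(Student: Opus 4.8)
The plan is to add up, line by line, the costs of \algoName{algo:LiftOneStep}, bearing in mind that all arithmetic at \cref{step_gstar} and \cref{step_ri} is performed over the ring $\B=\A_{2\kappa}[\Lambda_1,\dots,\Lambda_N]/\langle\Lambda_1,\dots,\Lambda_N\rangle^2$. An element of $\B$ is stored as $b_0+\sum_{i=1}^N b_i\Lambda_i$, so an addition or a multiplication in $\B$ costs $O(N)$ operations in $\A_{2\kappa}$; since $N=\delta+m_s\le 2\delta$, this is $O(\delta)$ operations each. Lines $1$--$4$, the coefficient extractions $r_{i,j}\gets\text{coeff}(r_i,\mu_j)$, and the final update $\alpha''_i\gets\alpha'_i+\epsilon_i$ each touch $O(N)$ elements of $\A_{2\kappa}$ and are negligible.

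At \cref{step_gstar} I would invoke \cref{prop:redfromP}: \nameref{algo:ReducedBasisFromParameters} performs $\softO(s^2n_0m_s)$ operations $(+,\times)$; it carries out no division and its operation count is ring-independent, so the same count applies over $\B$ (we use only the operation count here, not the output specification, which is valid over fields). Combined with the bound $O(s\delta)$ on the number of coefficients of $\mathcal G^*$ over $\B$, this contributes $\softO(s^2n_0m_s)$ operations in $\A_{2\kappa}$ once the $\B$-arithmetic overhead is taken into account. Making this last point precise --- that is, controlling the $\Lambda$-supports of the intermediate quantities propagated through the Conca--Valla formulas so that they do not inflate the count --- is where I expect the main difficulty to lie.

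At \cref{step_ri}, each of the $t$ calls to \nameref{algo:ReductionGeneralInput} reduces $f'_i$, of degree at most $d$, modulo $\mathcal G^*$. Applying \cref{prop:genred} with partial degrees of $f'_i$ at most $d+1$ gives $\softO(d^2+dm_s+n_0m_s+s\delta)$ operations $(+,\times)$; moreover, since the reduction first brings $f'_i$ modulo $(g_0,g_s)$ and later steps never increase the $y$-degree, all intermediate polynomials keep $y$-degree at most $d$, so --- refining the cost analysis of \nameref{algo:Reduction} to use $\min(d,n_0)$ in place of $n_0$ --- the term $n_0m_s$ is absorbed into $dm_s$. Each reduction thus uses $\softO(d^2+dm_s+s\delta)$ operations over $\B$, i.e.\ $\softO(\delta(d^2+dm_s+s\delta))$ operations in $\A_{2\kappa}$; summing over $i=1,\dots,t$ gives $\softO(t\delta(d^2+dm_s+s\delta))$. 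This degree bookkeeping is the second, minor point needing care.

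Finally, at \cref{step_linsolve} I would read the array $\mathscr R$ of $t\delta$ elements of $\B$ coefficient-wise as a linear system $M\epsilon=-c$ over $\A_{2\kappa}$, with $M$ the $t\delta\times N$ matrix of $\Lambda$-coefficients and $c$ the vector of constant terms (extracting $M$ and $c$ costs $O(t\delta N)=\softO(t\delta^2)$ operations). The correctness proof shows that $M\brem\mm$ has trivial kernel, so $M$ has an $N\times N$ submatrix whose determinant is invertible in $\A_{2\kappa}$, and the system has a unique solution. I would compute the row rank profile of $M\brem\mm$ over $\k$ in $\softO(t\delta\,N^{\omega-1})$ operations, then invert the corresponding $N\times N$ block over the local ring $\A_{2\kappa}$ in $\softO(N^\omega)$; with $N\le 2\delta$ and $t\ge 1$ this is $\softO(t\delta\,\delta^{\omega-1})$. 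Adding the contributions of \cref{step_gstar}, \cref{step_ri} and \cref{step_linsolve}, together with the negligible terms, yields the announced bound $\softO(s^2n_0m_s+t\delta(d^2+dm_s+s\delta+\delta^{\omega-1}))$.
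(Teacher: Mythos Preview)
Your outline matches the paper's --- cost-account line by line, with arithmetic in $\B=\A_{2\kappa}[\Lambda_1,\dots,\Lambda_N]/\langle\Lambda_1,\dots,\Lambda_N\rangle^2$ costing $O(\delta)$ per operation --- but at two points you work harder than the paper does, and at the first of these the bound you claim is not actually justified.

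At \cref{step_gstar}, the paper does precisely the first thing you say: quote \cref{prop:redfromP} for $\softO(s^2n_0m_s)$ operations $(+,\times)$ over $\B$, note that each costs $O(\delta)$ in $\A_{2\kappa}$, and stop. It makes no attempt to control $\Lambda$-supports of intermediate quantities. Your proposed sharpening to $\softO(s^2n_0m_s)$ operations in $\A_{2\kappa}$ is not what the paper establishes, and the ``main difficulty'' you flag does not have the resolution you hope for: after a few multiplications inside \nameref{algo:ReducedBasisFromParameters} the $\Lambda$-supports become dense. Just pay the $O(\delta)$ overhead.

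At \cref{step_ri}, the paper absorbs $n_0m_s$ into $dm_s$ with the single remark that $n_0\le d$. (For any root $a$ of $D_1$, the image of $I$ modulo $x-a$ is generated by $g_0(a,y)$, of degree exactly $n_0$, and also contains some nonzero $f_i(a,y)$ of degree at most $d$; otherwise the line $x=a$ would lie in $V(J)$ and no zero-dimensional primary component of $J$ could sit over $a$.) There is no need to re-open the cost analysis of \nameref{algo:Reduction} to carry $\min(d,n_0)$ through the partial reductions.

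Your treatment of \cref{step_linsolve} and of the remaining bookkeeping is essentially the paper's.
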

\begin{proof}
  By convention (see the introduction), lifting each $\alpha_i$ to
  $\alpha'_i$ takes one operation in $\A_{2\kappa}$, for a total of
  $O(N) = O(\delta)$ operations.

  By \cref{prop:redfromP}, computing $\mathcal G^*$ takes $\softO(s^2 n_0
  m_s)$ operations $(+,\times)$ in $\B$, with each such operation
  taking $O(\delta)$ operations in $\A_{2\kappa}$.

  At \cref{step_ri}, by \cref{prop:genred},
  \algoName{algo:ReductionGeneralInput} uses $\softO(d^2 + d m_s + n_0
  m_s + s\delta)$ operations $(+,\times)$ in $\B$. Here, we know that
  $n_0$ is at most $d$, so the runtime for all $f_i$'s becomes
  $\softO(t(d^2 + d m_s + s\delta))$ operations in $\B$, which is
  $\softO(t\delta(d^2 + d m_s + s\delta))$ operations in
  $\A_{2\kappa}$.

  Finally, we have to solve the linear system defined by
  $\mathscr{R}=0$ over $\A_{2\kappa}$. This is a system in $t\delta$
  equations and $N$ unknowns, and we know that it admits a unique
  solution in $\A_{2\kappa}^N$, since the corresponding matrix has
  trivial kernel modulo $\mm$. Even though $\A_{2\kappa}^N$ is not a
  field, we may still apply fast algorithms, such as the one
  in~\cite{IbMoHu82} (as extended in~\cite{jeannerod2006lsp}),
  replacing zero-tests by invertibility tests; this takes
  $\softO(t\delta^{\omega})$ operations in $\A_{2\kappa}$.
\end{proof}

As usual, if ${\mathcal G}$ (and thus $\mathcal G_\mm$) is $\langle
x,y\rangle$-primary, we may use a variant of this lifting procedure,
called \textsc{LiftOneStepPunctualParameters}, which uses
\textsc{ReducedBasisFromPunctualParameters} and
\textsc{PunctualParametersFromReducedBasis} as subroutines. It allows
us to work with $N'$ rather than $N$ unknown Gr\"obner parameters; the
proof now relies on \cref{coro:punctuallift}, and the runtime becomes
$\softO (s^2 n_0 m_s + t\delta^2(m_s + \delta^{\omega-2}))$ operations
in $\A_{2\kappa}$ (see \cref{prop:genred}).

At this stage, we are almost done with the proof of \cref{prop:quad}:
for $K=2^k$, the algorithm simply computes $\mathcal G\brem \mm^K$
through repeated calls to \algoName{algo:LiftOneStep}. However, this
procedure works with Gr\"obner parameters as input and output. Hence,
prior to entering \algoName{algo:LiftOneStep} for the first time, we
compute the Gr\"obner parameters of $\mathcal G \brem \mm$, and after
the last call to \algoName{algo:LiftOneStep}, we compute $\mathcal G
\brem \mm^K$ using \algoName{algo:ReducedBasisFromParameters}. This
extra work does not affect the asymptotic runtime, so that we do
$\softO (s^2 n_0 m_s + t\delta(d^2 + d m_s + s\delta))$ operations in
$\A/\mm^{2^i}$, for $i=1,\dots,k$.

The only operations not accounted for so far are the coefficient-wise
reductions of the polynomials $f_1,\dots,f_t$ modulo
$\mm^2,\dots,\mm^{2^k}$. These cannot be expressed in terms of
operations in the residue class rings $\A/\mm^{2^i}$; instead, as per
the convention in the introduction, we assume that each coefficient
reduction modulo $\mm^{2^i}$ takes time $T_{2^i}$, for a total of $t
d^2 T_{2^i}$ for each $i=1,\dots,k$. This concludes the proof of our
main theorem. When we work with the punctual Gr\"obner cell, we saw in
\cref{prop:genred} that only $\delta m_s$ coefficients of each input
polynomial are needed, whence $t \delta m_s T_{2^i}$ steps for
coefficient reduction, for all indices $i$.

\begin{remark}
  If one wishes to work only with Gr\"obner bases as input and output,
  it is straightforward to design algorithms called
  \textsc{LiftOneStepGroebnerBasis} (and
  \textsc{LiftOneStepPunctualGroebnerBasis}), that take
  $f'_1,\dots,f'_t$ and $\mathcal G \bmod \mm^\kappa$ as input and return
  $\mathcal G \bmod \mm^{2\kappa}$. It suffices to call
  \algoName{algo:ParametersFromReducedBasis} when entering the
  procedure, then \algoName{algo:LiftOneStep}, and finally
  \algoName{algo:ReducedBasisFromParameters} before exiting (or their
  punctual variants). This does not affect asymptotic runtimes, but is
  not useful in the context of our main theorem.
\end{remark}

\begin{remark}
  When $\mm$ is principal, we can slightly improve of the lifting
  procedure by using either divide-and-conquer techniques (folklore)
  or relaxed algorithms~\cite[Section~4]{BeLe12} to solve the linear
  system that gives $\epsilon_1,\dots,\epsilon_N$. The downside is
  that the runtime is not written in terms of operations in
  $\A_{2\kappa}$ anymore. Instead, we give runtimes for the common
  cases $\A=\Z$ and $\mm=\langle p\rangle$, and $\A=\k[t]$ and
  $\mm=\langle t - \tau \rangle$:
  \begin{itemize}
  \item In the former case, solving the system uses
    $\softO(t\delta^{\omega} \log(p))$ bit operations, for a one-time
    computation (matrix inversion) done modulo $p$, and
    $\softO(\delta^2 \kappa \log(p))$ for subsequently solving the
    system modulo $p^{2\kappa}$.
  \item In the latter case, the one time computation takes
    $\softO(t\delta^{\omega})$ operations in $\k$, after which linear
    system solving takes $\softO(\delta^2 \kappa)$ operations in $\k$.
  \end{itemize}
  To wit, each operation in $\A_{2\kappa}$, as reported in
  \cref{prop:LiftOneStepCompl}, takes $\softO( \kappa \log(p))$ bit
  operations in the former case, and $\softO( \kappa \log(p))$
  operations in the latter. The net effect is that in both case, the
  cost of solving the linear system can be neglected (up to the
  one-time computation we perform at the beginning).
\end{remark}

\begin{example}
    We show one step of the algorithm for our running example (\autoref{ex:runningex}),
focusing on the punctual Gr\"obner parameters. Our input is the
polynomials $f_1,f_2$ as in the introduction, together with the
Gr\"obner basis of the $\langle x,y\rangle$-primary component of
$\langle f_1 \brem p, f_2 \brem p\rangle$, with $p=11$; namely:
\begin{align*}
\left |
\begin{array}{l}
  y^4 + 2 xy + 7 x^2, \\[1mm]
   x y^3 +5 x^3,\\[1mm]
  x^2 y + 9x^3,\\[1mm]
  x^4.
\end{array}
\right .
\end{align*}
We deduce the punctual Gr\"obner parameters modulo 11, $\alpha=(0, 2,
2, 5, 9) \in \Z/11\Z^5$ (recall that $N'=5$ here). Following the
algorithm, we set
$(\ell_1,\dots,\ell_5)=(\Lambda_1,2+\Lambda_2,2+\Lambda_3,5+\Lambda_4,9+\Lambda_5)$
and we compute the corresponding punctual Gr\"obner basis, with
coefficients truncated modulo $11^2$ and $\langle
\Lambda_1,\dots,\Lambda_N \rangle^2$. We obtain the polynomials written 
${\mathcal G}^*$ in the pseudo-code:
\begin{align*}
\left |
\begin{array}{l}
  y^4 +\Lambda_1 xy^2 + (\Lambda_2 + 2)  xy +(40\Lambda_1 + \Lambda_3 + 103\Lambda_4 + 111\Lambda_5 + 33)x^3 +(9\Lambda_2 + 2\Lambda_5 + 18)x^2, \\[1mm]
   x y^3 +(\Lambda_4 + 5) x^3,\\[1mm]
  x^2 y + (\Lambda_5 + 9)x^3,\\[1mm]
  x^4.
\end{array}
\right .
\end{align*}
Reducing $f_1$ and $f_2$ modulo ${\mathcal G}^*$ (with calculations done
modulo $11^2$ and $\langle \Lambda_1,\dots,\Lambda_N \rangle^2$), and 
keeping coefficients, we obtain the linear equations $\mathscr{R}$ 
(we only show the non-zero ones)
$$ 14\Lambda_1=14\Lambda_2 + 11=
    76\Lambda_1 + 14\Lambda_3 + 111\Lambda_4 + 102\Lambda_5 + 99=
    5\Lambda_2 + 28\Lambda_5 + 44=
    103\Lambda_4 + 10\Lambda_5=0.$$
They admit the following unique solution modulo $11^2$:
$$\epsilon_1=0,\ \epsilon_2=77,\ \epsilon_3=110,\ \epsilon_4=88,\ \epsilon_5=110;$$
as expected, all $\epsilon_i$ vanish modulo $11$. From this, $\alpha$
is updated to take the value $\alpha + \epsilon=[0, 79, 112, 93, 119
]$ modulo $11^2$. One can verify that this coincides modulo $11^2$
with the values given in \ref{eq:valueslambda}.
\end{example} 

%%%%%%%%%%%%%%%%%%%%%%%%%%%%%%%%%%%%%%%%%%%%%%%%%%%%%%%%%%%%
%%%%%%%%%%%%%%%%%%%%%%%%%%%%%%%%%%%%%%%%%%%%%%%%%%%%%%%%%%%%
%%%%%%%%%%%%%%%%%%%%%%%%%%%%%%%%%%%%%%%%%%%%%%%%%%%%%%%%%%%%

\section{Conclusion}

A natural question is whether our approach can be used for ideals in
more than two variables. As of now, several ingredients are missing:
the known structure results are not as complete as
Lazard's~\cite{marinari2013cerlienco}, and there is no known explicit
description of Gr\"obner cells. Algorithmically, the key operation
(reduction modulo an $n$-variate lexicographic Gr\"obner basis) seems
to be a challenging problem in itself.

As already mentioned in the introduction, using our results in order
to recover $\mathcal G$ itself, rather than $\mathcal G \brem \mm^K$,
including in particular the quantification of bad maximal ideals
$\mm$, is the subject of future work.  Beyond this, the main
algorithmic improvement we would like to achieve is reducing the
overall cost so that it matches that of~\cite{LMS13}, in cases where
both approaches are applicable.

\section{Acknowledgements}
The authors thank: NSERC (Discovery Grant and Alexander Graham Bell Canada Graduate Scholarship), FQRNT and the University of Waterloo for their generous support.

%% If you have bibdatabase file and want bibtex to generate the
%% bibitems, please use
%%
  \bibliographystyle{elsarticle-num} 
  \bibliography{biblio}

\begin{thebibliography}{10}
\expandafter\ifx\csname url\endcsname\relax
  \def\url#1{\texttt{#1}}\fi
\expandafter\ifx\csname urlprefix\endcsname\relax\def\urlprefix{URL }\fi
\expandafter\ifx\csname href\endcsname\relax
  \def\href#1#2{#2} \def\path#1{#1}\fi

\bibitem{GoKa96}
L.~Gonz\'alez-Vega, M.~E. Kahoui, An improved upper complexity bound for the
  topology computation of a real algebraic plane curve, Journal of Complexity
  12~(4) (1996) 527 -- 544.

\bibitem{EmTs05}
I.~Z. Emiris, E.~P. Tsigaridas, Real solving of bivariate polynomial systems,
  in: CASC, Springer, 2005, pp. 150--161.

\bibitem{DiEmTs09}
D.~Diochnos, I.~Emiris, E.~Tsigaridas, On the asymptotic and practical
  complexity of solving bivariate systems over the reals, J. Symb. Comput.
  44~(7) (2009) 818--835.

\bibitem{AlMoWi10}
L.~Alberti, B.~Mourrain, J.~Wintz, {Topology and arrangement computation of
  semi-algebraic planar curves}, Computer Aided Geometric Design 25~(8) (2008)
  631--651.

\bibitem{Rouillier10}
F.~Rouillier, On solving systems of bivariate polynomials, in: ICMS, Vol. 6327
  of Lecture Notes in Computer Science, Springer, 2010, pp. 100--104.

\bibitem{BeEmSa11}
E.~Berberich, P.~Emeliyanenko, M.~Sagraloff, An elimination method for solving
  bivariate polynomial systems: Eliminating the usual drawbacks, in: ALENEX,
  SIAM, 2011, pp. 35--47.

\bibitem{EmSa12}
P.~Emeliyanenko, M.~Sagraloff, On the complexity of solving a bivariate
  polynomial system, in: ISSAC'12, ACM, 2012, pp. 154--161.

\bibitem{BoLaPoRo13}
Y.~Bouzidi, S.~Lazard, M.~Pouget, F.~Rouillier, Rational univariate
  representations of bivariate systems and applications, in: ISSAC'13, ACM,
  2013, pp. 109--116.

\bibitem{LMS13}
R.~Lebreton, E.~Mehrabi, {\'E}.~Schost, On the complexity of solving bivariate
  systems: the case of non-singular solutions, in: ISSAC'13, ACM, 2013, pp.
  251--258.

\bibitem{BoLaMoPoRo14}
Y.~Bouzidi, S.~Lazard, G.~Moroz, M.~Pouget, F.~Rouillier, Improved algorithm
  for computing separating linear forms for bivariate systems, in: ISSAC'14,
  ACM, 2014, pp. 75--82.

\bibitem{KoSa14}
A.~Kobel, M.~Sagraloff, Improved complexity bounds for computing with planar
  algebraic curves, CoRR abs/1401.5690 (2014).

\bibitem{MeSc16}
E.~Mehrabi, E.~Schost, A softly optimal {M}onte {C}arlo algorithm for solving
  bivariate polynomial systems over the integers, Journal of Complexity 34
  (2016) 78--128.
\newblock \href {https://doi.org/https://doi.org/10.1016/j.jco.2015.11.009}
  {\path{doi:https://doi.org/10.1016/j.jco.2015.11.009}}.

\bibitem{KoSa15}
A.~Kobel, M.~Sagraloff, On the complexity of computing with planar algebraic
  curves, Journal of Complexity 31~(2) (2015) 206--236.
\newblock \href {https://doi.org/https://doi.org/10.1016/j.jco.2014.08.002}
  {\path{doi:https://doi.org/10.1016/j.jco.2014.08.002}}.

\bibitem{BoLaMoPoRoSa16}
Y.~Bouzidi, S.~Lazard, G.~Moroz, M.~Pouget, F.~Rouillier, M.~Sagraloff, Solving
  bivariate systems using rational univariate representations, Journal of
  Complexity 37 (2016) 34--75.
\newblock \href {https://doi.org/https://doi.org/10.1016/j.jco.2016.07.002}
  {\path{doi:https://doi.org/10.1016/j.jco.2016.07.002}}.

\bibitem{DiDiRoRoSa18}
D.~N. Diatta, S.~Diatta, F.~Rouillier, M.-F. Roy, M.~Sagraloff, Bounds for
  polynomials on algebraic numbers and application to curve topology (2021).
\newblock \href {http://arxiv.org/abs/1807.10622} {\path{arXiv:1807.10622}}.

\bibitem{Dahan22}
X.~Dahan, Lexicographic {G}r\"obner bases of bivariate polynomials modulo a
  univariate one, Journal of Symbolic Computation 110 (2022) 24--65.
\newblock \href {https://doi.org/https://doi.org/10.1016/j.jsc.2021.10.001}
  {\path{doi:https://doi.org/10.1016/j.jsc.2021.10.001}}.

\bibitem{Rouillier99}
F.~Rouillier, Solving zero-dimensional systems through the {R}ational
  {U}nivariate {R}epresentation, Applicable Algebra in Engineering,
  Communication and Computing 9~(5) (1999) 433--461.

\bibitem{Ebert83}
G.~L. Ebert, Some comments on the modular approach to {G}r{\"o}bner bases, ACM
  SIGSAM Bull. 17~(2) (1983) 28--32.

\bibitem{Trinks84}
W.~Trinks, On improving approximate results of {B}uchberger's algorithm by
  {N}ewton's method, SIGSAM Bull. 18~(3) (1984) 7–11.
\newblock \href {https://doi.org/10.1145/1089389.1089392}
  {\path{doi:10.1145/1089389.1089392}}.

\bibitem{GiHeMoPa95}
M.~Giusti, J.~Heintz, J.-E. Morais, L.-M. Pardo, When polynomial equation
  systems can be solved fast?, in: AAECC-11, Vol. 948 of LNCS, Springer, 1995,
  pp. 205--231.

\bibitem{GiHeMoMoPa98}
M.~Giusti, J.~Heintz, J.-E. Morais, J.~Morgenstern, L.-M. Pardo, Straight-line
  programs in geometric elimination theory, Journal of Pure and Applied Algebra
  124 (1998) 101--146.

\bibitem{GiHaHeMoMoPa97}
M.~Giusti, K.~H{\"a}gele, J.~Heintz, J.-E. Morais, J.-L. Monta{\~n}a, L.-M.
  Pardo, Lower bounds for diophantine approximation, J. of Pure and Applied
  Algebra 117/118 (1997) 277--317.

\bibitem{GiLeSa01}
M.~Giusti, G.~Lecerf, B.~Salvy, A {G}r{\"o}bner-free alternative for polynomial
  system solving, Journal of Complexity 17~(1) (2001) 154--211.

\bibitem{Schost03}
{\'E}.~Schost, Computing parametric geometric resolutions, Applicable Algebra
  in Engineering, Communication and Computing 13~(5) (2003) 349--393.

\bibitem{Winkler88}
F.~Winkler, A $p$-adic approach to the computation of {G}r{\"{o}}bner bases, J.
  Symb. Comput. 6~(2/3) (1988) 287--304.
\newblock \href {https://doi.org/10.1016/S0747-7171(88)80049-X}
  {\path{doi:10.1016/S0747-7171(88)80049-X}}.

\bibitem{Pauer92}
F.~Pauer, On lucky ideals for {Gr{\"o}bner} basis computations, J. Symb. Comp.
  14~(5) (1992) 471--482.

\bibitem{Arnold03}
E.~A. Arnold, Modular algorithms for computing {G}r{\"o}bner bases, J. Symb.
  Comp. 35~(4) (2003) 403--419.

\bibitem{OjWaMi83}
T.~Ojika, S.~Watanabe, T.~Mitsui,
  \href{https://www.sciencedirect.com/science/article/pii/0022247X83900550}{Deflation
  algorithm for the multiple roots of a system of nonlinear equations}, Journal
  of Mathematical Analysis and Applications 96~(2) (1983) 463--479.
\newblock \href {https://doi.org/https://doi.org/10.1016/0022-247X(83)90055-0}
  {\path{doi:https://doi.org/10.1016/0022-247X(83)90055-0}}.
\newline\urlprefix\url{https://www.sciencedirect.com/science/article/pii/0022247X83900550}

\bibitem{Yamamoto84}
N.~Yamamoto, Regularization of solutions of nonlinear equations with singular
  {J}acobian matrices, J. Infor. Processing 7 (1984) 16--21.

\bibitem{Lecerf02}
G.~Lecerf, Quadratic {N}ewton iteration for systems with multiplicity,
  Foundations of Computational Mathematics 2~(3) (2002) 247--293.

\bibitem{LeVeZh06}
A.~Leykin, J.~Verschelde, A.~Zhao, Newton's method with deflation for isolated
  singularities of polynomial systems, Theoretical Computer Science 359~(1)
  (2006) 111--122.
\newblock \href {https://doi.org/https://doi.org/10.1016/j.tcs.2006.02.018}
  {\path{doi:https://doi.org/10.1016/j.tcs.2006.02.018}}.

\bibitem{LeVeZh08}
A.~Leykin, J.~Verschelde, A.~Zhao, Higher-order deflation for polynomial
  systems with isolated singular solutions, in: Algorithms in algebraic
  geometry, Springer, 2008, pp. 79--97.

\bibitem{PoSz09}
S.~Pope, A.~Szanto, Nearest multivariate system with given root multiplicities,
  Journal of Symbolic Computation 44~(6) (2009) 606--625.
\newblock \href {https://doi.org/https://doi.org/10.1016/j.jsc.2008.03.005}
  {\path{doi:https://doi.org/10.1016/j.jsc.2008.03.005}}.

\bibitem{DaLiZe11}
B.~Dayton, T.-Y. Li, Z.~Zeng, Multiple zeros of nonlinear systems, Mathematics
  of Computation 80~(276) (2011) 2143--2168.

\bibitem{MaMo11}
A.~Mantzaflaris, B.~Mourrain, Deflation and certified isolation of singular
  zeros of polynomial systems, in: ISSAC'11, ACM Press, 2011, p. 249–256.
\newblock \href {https://doi.org/10.1145/1993886.1993925}
  {\path{doi:10.1145/1993886.1993925}}.

\bibitem{WuZh12}
X.~Wu, L.~Zhi, Determining singular solutions of polynomial systems via
  symbolic–numeric reduction to geometric involutive forms, Journal of
  Symbolic Computation 47~(3) (2012) 227--238.
\newblock \href {https://doi.org/https://doi.org/10.1016/j.jsc.2011.10.001}
  {\path{doi:https://doi.org/10.1016/j.jsc.2011.10.001}}.

\bibitem{HaMoSz17}
J.~Hauenstein, B.~Mourrain, A.~Szanto, On deflation and multiplicity structure,
  Journal of Symbolic Computation 83 (2017) 228--253, special issue on the
  conference ISSAC 2015: Symbolic computation and computer algebra.
\newblock \href {https://doi.org/https://doi.org/10.1016/j.jsc.2016.11.013}
  {\path{doi:https://doi.org/10.1016/j.jsc.2016.11.013}}.

\bibitem{CoVa08}
A.~Conca, G.~Valla, \href{https://doi.org/10.1307/mmj/1220879402}{{Canonical
  Hilbert-Burch matrices for ideals of $k[x,y]$}}, Michigan Mathematical
  Journal 57 (2008) 157 -- 172.
\newblock \href {https://doi.org/10.1307/mmj/1220879402}
  {\path{doi:10.1307/mmj/1220879402}}.
\newline\urlprefix\url{https://doi.org/10.1307/mmj/1220879402}

\bibitem{HoLa18}
J.~van~der Hoeven, R.~Larrieu, Fast reduction of bivariate polynomials with
  respect to sufficiently regular {G}r\"{o}bner bases, in: ISSAC '18, ACM
  Press, 2018, p. 199–206.
\newblock \href {https://doi.org/10.1145/3208976.3209003}
  {\path{doi:10.1145/3208976.3209003}}.

\bibitem{Lazard85}
D.~Lazard, Ideal bases and primary decomposition: case of two variables, J.
  Symbolic Comput. 1~(3) (1985) 261--270.

\bibitem{GaGe13}
J.~von~zur Gathen, J.~Gerhard, Modern Computer Algebra, 3rd Edition, Cambridge
  University Press, 2013.

\bibitem{HyMeScSt19}
S.~G. Hyun, S.~Melczer, E.~Schost, C.~St-Pierre, Change of basis for
  $\mathfrak{m}$-primary ideals in one and two variables, in: ISSAC'19, ACM
  Press, 2019, pp. 227--234.

\bibitem{CaKaYa89}
J.~Canny, E.~Kaltofen, Y.~Lakshman, Solving systems of non-linear polynomial
  equations faster, in: ISSAC'89, ACM, 1989, pp. 121--128.

\bibitem{HoSc13}
J.~{van der Hoeven}, E.~Schost, Multi-point evaluation in higher dimensions,
  Applicable Algebra in Engineering, Communication and Computing 24~(1) (2013)
  37--52.

\bibitem{BrGa73}
J.~Brian\c{c}on, A.~Galligo, D\'eformations distingu\'ees d'un point de
  $\mathbb{C}^2$ ou $\mathbb{R}^2$, in: Singularit\'es \`a Carg\`ese, no. 7-8
  in Ast\'erisque, Soci\'et\'e math\'ematique de France, 1973, pp. 129--138.

\bibitem{Briancon77}
J.~Brian\c{c}on, Description de {H}ilb${}^n\ \mathbb{C}\{x,y\}$, Inventiones
  Mathematicae 41 (1977) 45--90.

\bibitem{Iarrobino77}
A.~Iarrobino, Punctual {H}ilbert Schemes, no. 188 in Memoirs of the American
  Mathematical Society, American Mathematical Society, 1977.

\bibitem{CarraFerro88}
G.~{Carr\`a Ferro}, Gr\"obner bases and {H}ilbert schemes. i, Journal of
  Symbolic Computation 6~(2) (1988) 219--230.
\newblock \href {https://doi.org/https://doi.org/10.1016/S0747-7171(88)80044-0}
  {\path{doi:https://doi.org/10.1016/S0747-7171(88)80044-0}}.

\bibitem{NoSp00}
R.~Notari, M.~L. Spreafico, A stratification of {H}ilbert schemes by initial
  ideals and applications, Manuscripta Mathematica 101 (2000) 429--448.

\bibitem{Robbiano09}
L.~Robbiano, On border basis and {G}r\"obner basis schemes, Collectanea
  Mathematica 60 (2009) 11--25.
\newblock \href {https://doi.org/10.1007/BF03191213}
  {\path{doi:10.1007/BF03191213}}.

\bibitem{LeRo11}
P.~Lella, M.~Roggero, Rational components of {H}ilbert schemes, Rend. Semin.
  Mat. Univ. Padova 126 (2011) 11--45.
\newblock \href {https://doi.org/10.4171/RSMUP/126-2}
  {\path{doi:10.4171/RSMUP/126-2}}.

\bibitem{ElSt87}
G.~Ellingsrud, S.~Str{\o}mme, On the homology of the {H}ilbert scheme of points
  in the plane, Inventiones Mathematicae 87 (1987) 343--352.
\newblock \href {https://doi.org/10.1007/BF01389419}
  {\path{doi:10.1007/BF01389419}}.

\bibitem{KeUm11}
K.~Kedlaya, C.~Umans, Fast polynomial factorization and modular composition,
  SIAM Journal on Computing 40~(6) (2011) 1767--1802.
\newblock \href {https://doi.org/10.1137/08073408X}
  {\path{doi:10.1137/08073408X}}.

\bibitem{NeSaScVi21}
V.~Neiger, B.~Salvy, E.~Schost, G.~Villard,
  \href{https://arxiv.org/abs/2110.08354}{Faster modular composition} (2021).
\newblock \href {https://doi.org/10.48550/ARXIV.2110.08354}
  {\path{doi:10.48550/ARXIV.2110.08354}}.
\newline\urlprefix\url{https://arxiv.org/abs/2110.08354}

\bibitem{IbMoHu82}
O.~H. Ibarra, S.~Moran, R.~Hui, A generalization of the fast {LUP} matrix
  decomposition algorithm and applications, J. Algorithms 3~(1) (1982) 45--56.

\bibitem{jeannerod2006lsp}
C.-P. Jeannerod, {LSP} matrix decomposition revisited (2006).

\bibitem{BeLe12}
J.~Berthomieu, R.~Lebreton, Relaxed $p$-adic {H}ensel lifting for algebraic
  systems, in: ISSAC'12, ACM, 2012, pp. 59--66.

\bibitem{marinari2013cerlienco}
M.~G. Marinari, T.~Mora, Cerlienco-{M}ureddu correspondence and {L}azard
  structural theorem, Investigaci{\'o}n Operacional 27~(1) (2013) 75--98.

\end{thebibliography}
\end{document}